\definecolor{lightgray}{rgb}{0.85,0.85,0.85}
\newcommand{\cC}			{{\mathscr{C}}}
\newcommand{\cU}			{{\mathscr{U}}}
\newcommand{\cA}            {{\mathcal{A}}}
\newcommand{\cB}            {{\mathcal{B}}}
\newcommand{\cZ}            {{\mathscr{Z}}}
\newcommand{\grad}		{{\nabla}}
\newcommand{\nn}			{{\nonumber}}
\newcommand{\calR}            {{\mathcal{R}}}
\newcommand{\calQ}            {{\mathcal{Q}}}
\newcommand{\R}			{{\mathbb{R}}}
\newcommand{\U}         {{\mathbb{U}}}
\newcommand{\B}         {{\mathbb{B}}}
\newcommand{\N}         {{\mathbb{N}}}
\newcommand{\I}         {{\mathbb{I}}}
\newcommand{\spd}[1]    {\mathbb{S}^{#1}_{>0}}
\newcommand{\spsd}[1]    {\mathbb{S}^{#1}_{\geq 0}}
\newcommand{\sym}[1]         {\mathbb{S}^{#1}}
\newcommand{\norm}[1]{\left\lVert#1\right\rVert}
\DeclareMathOperator*{\argmax}{arg\,max}
\newcommand{\norml}[1]{\lVert#1\rVert_2}
\newcommand{\bnorml}[1]{\big\lVert#1\big\rVert_2}
\newcommand{\iprod}[2]{\left\langle#1, #2\right\rangle}
\newcommand{\half}[0]{\frac{1}{2}}
\newcommand{\integral}[4]{\int_{#1}^{#2}{#3}\,d#4}
\newtheorem{definition}{Definition}
\newtheorem{theorem}{Theorem}
\newtheorem{corollary}{Corollary}
\newtheorem{remark}{Remark}
\newtheorem{assumption}{Assumption}
\newtheorem{lemma}{Lemma}
\title{\textbf{A Hamilton-Jacobi-Bellman Approach to Ellipsoidal Approximations of Reachable Sets for Linear Time-Varying Systems}\thanks{This research was supported by the Australian Commonwealth Government through the Ingenium Scholarship, the Australian Research Council through a Linkage Project grant (Grant number: LP190100104) in collaboration with BAE systems, and the Air Force Office of Scientific Research (Grant number: FA2386‐22-1-4074).}}
\author{Vincent Liu$^\dag$ \and Chris Manzie$^\dag$ \and Peter M. Dower\thanks{The authors are with the Department of Electrical and Electronic Engineering, University of Melbourne, VIC, 3010 Australia (email: liuv2@student.unimelb.edu.au; manziec@unimelb.edu.au; pdower@unimelb.edu.au). }}
\date{}
\begin{document}
\maketitle

\begin{abstract}
Reachable sets for a dynamical system describe collections of system states that can be reached in finite time, subject to system dynamics. They can be used to guarantee goal satisfaction in controller design or to verify that unsafe regions will be avoided. However, conventional grid-based methods for computing these sets suffer from the curse of dimensionality, which typically prohibits their use for systems with more than a small number of states, even if they are linear. In this paper, we demonstrate that local viscosity supersolutions and subsolutions of a Hamilton-Jacobi-Bellman equation can be used to generate, respectively, under-approximating and over-approximating reachable sets for time-varying nonlinear systems. Based on this observation, we derive dynamics for a union and intersection of ellipsoidal sets that, respectively, under-approximate and over-approximate the reachable set for linear time-varying systems subject to an ellipsoidal input constraint and an ellipsoidal terminal (or initial) set. The dynamics for these ellipsoids can be selected to ensure that their boundaries coincide with the boundary of the exact reachable set along a collection of solutions of the system. The ellipsoids can be generated with polynomial computational complexity in the number of states, making our approximation scheme computationally tractable for continuous-time linear time-varying systems of relatively high dimension.

\vspace{1em}
\noindent\textbf{Keywords:} Reachability analysis, Hamilton-Jacobi equations, linear systems, optimal control
\end{abstract}

\section{Introduction}
\label{sec: introduction}
The development of self-driving cars, the use of collaborative robots in industrial settings, and the deployment of drones in disaster responses are examples of society's ever-increasing reliance and integration of autonomous systems in day-to-day life. \emph{How can we be certain that these systems will complete their intended task and is it possible for them to behave unsafely?} It is such concerns that give impetus to research into methods of verifying the safety and reliability of these systems, and reachable sets lend themselves well to this task. When computed backwards in time, these sets describe all initial states for which there exists a constraint admissible control that leads the system to a set $\mathcal{X}$ at a terminal time $T$ (see Fig. \ref{fig: intro: reachable sets diagram}). These sets can be used to design controllers with rigorous guarantees of goal satisfaction, which have been previously deployed for maneuvering autonomous cars \cite{schurmann:21} and lane merging of unmanned aerial vehicles \cite{chen2017reachability}. When computed forwards in time, we obtain the set of states that can be reached using an admissible control starting from $\mathcal{X}$ at some initial time $t$. This information can be used to verify that an autonomous system will not enter an unsafe region due to an adversarial input, which is useful in developing collision avoidance control strategies \cite{llanes2022safety} or guaranteeing the existence of collision-free paths \cite{zhou2015reachable}. 
\begin{figure}[h]
    \centering
    \includegraphics[width=0.85\columnwidth]{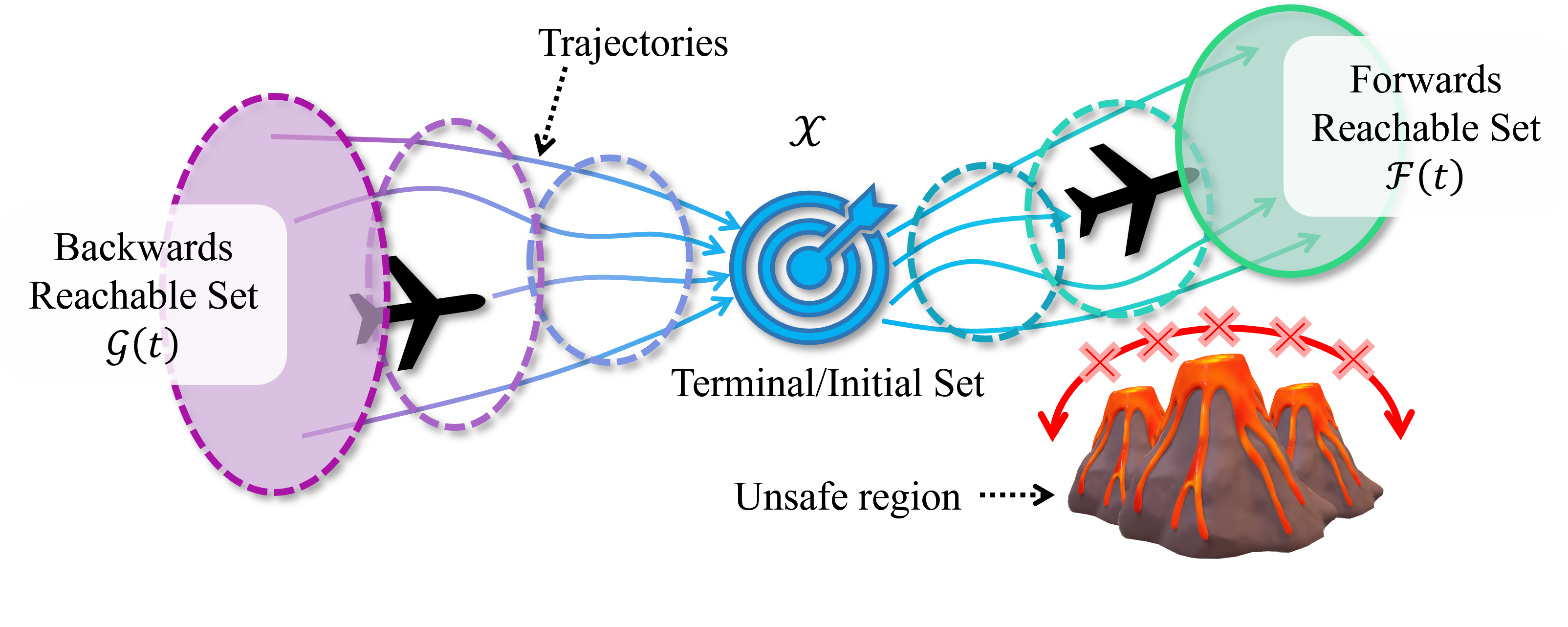}
    \vspace{-0.4cm}
    \caption{Diagram depicting the backwards reachable set $\mathcal{G}(t)$ and the forwards reachable set $\mathcal{F}(t)$, which enclose all admissible trajectories heading to or away from the set $\mathcal{X}$.}
    \label{fig: intro: reachable sets diagram}
\end{figure}

Hamilton-Jacobi (HJ) reachability frames the computation of reachable sets as an optimal control problem. It is a versatile formulation that allows for the direct treatment of nonlinear systems \cite{chen2018hamilton}, two-player game settings \cite{mitchell2005time}, and handling of state constraints \cite{altarovici2013general}. To perform reachability analysis, publicly available toolboxes, such as \cite{mitchelltoolbox}, provide a conventional and general-purpose method of numerically solving HJ equations over a grid in state-space. However, these approaches exhibit computational times that scale exponentially with the number of system states, rendering such schemes computationally intractable for many engineering applications. Consequently, a large body of research in this field focuses on efficient ways to compute or approximate reachable sets. 

When dealing with linear systems, a range of approximation schemes are available. In a discrete-time setting, where the terminal or initial set is a polytope and the input set is also a polytope, the reachable sets have an exact polytopic representation \cite[Chapter 10]{borrelli2017predictive}. However, the number of linear inequalities required to represent these polytopes grow with each time step, leading to issues with storage capacity and memory requirements. The use of zonotopes in \cite{yang:21} and \cite{girard2005reachability} can mitigate some of these issues due to their highly efficient set operations. Ellipsoidal approximation schemes have also garnered popularity in both discrete-time \cite{halder2020smallest} and continuous-time settings \cite{shishido:00} as ellipsoids can be represented with a fixed number of parameters. Generalisations of these geometries have also been studied, such as ellipsotopes \cite{kousik2022ellipsotopes}, which subsume both ellipsoids and zonotopes, and in characterisations based on support functions \cite{le2010reachability}. The linchpin in the aforementioned research is that approximations of the Minkowski sum can be used to develop under- and over-approximation (subset and superset) guarantees for reachability of linear systems. 

When it comes to nonlinear systems, linearisation-based techniques such as the Taylor series approximation \cite{althoff:13} or Koopman operator linearisation \cite{goswami2021bilinearization}, are common methods for leveraging existing algorithms for linear systems. Otherwise, efficient algorithms may be available, for instance, when the system can be decomposed into subsystems \cite{ChenDecomposition:18} or when a mixed monotone embedding exists \cite{coogan2020mixed}. Other approximation methods include implicit storage of the sets using classification-based control laws \cite{rubies2019classification}, neural network approximations \cite{julian2021reachability}, and boundary approximations \cite{alla2023tree}.

In this paper, we demonstrate that local viscosity supersolutions and subsolutions of a Hamilton-Jacobi-Bellman partial differential equation (HJB PDE) can be used to produce under- and over-approximations of reachable sets for time-varying nonlinear systems in continuous-time. This observation is then used to develop under- and over-approximation schemes for linear time-varying (LTV) systems with ellipsoidal input constraints and ellipsoidal terminal/initial sets. We focus on a single player setting, which is a special case of the problem considered in \cite{mitchell2005time}. Our approach employs the union and intersection of a family of ellipsoids to approximate reachable sets. These results extend our prior work in \cite{LMD23}, whereby classical supersolutions and subsolutions were used to develop ellipsoidal approximation schemes for linear time-invariant systems. By using viscosity solutions as opposed to classical solutions, approximating sets with non-smooth boundaries can be considered, for example, when the approximating set is constructed via the union or intersection of sets with smooth boundaries. A similar idea is utilised in \cite{xue2019inner}, albeit with smooth polynomial supersolutions developed for time-invariant polynomial systems.

A key novelty in our approach, relative to the vast majority of existing literature for linear systems, is that we do not rely on approximations of set operations (see e.g. \cite{kurzhanski2000ellipsoidal, kurzhanski2002reachability}). In particular, we need not deviate from the HJB framework used in nonlinear reachability analysis, thereby allowing for a unified treatment of both nonlinear and linear systems. The approximating sets developed through supersolutions and subsolutions of the HJB PDE possess the same desirable characteristics as existing schemes. For instance, the proposed under- and over-approximating collection of ellipsoids are also, respectively, under- and over-approximating for all intermediate times, which corresponds to the recurrence relation discussed in \cite{kurzhanski2000ellipsoidal}. The dynamics of each ellipsoid can be chosen to guarantee that their boundaries coincide with the exact reachable set along a solution of the system, which recovers the tightness property discussed in \cite{kurzhanski2002reachability}. The approach taken in this work yields a different characterisation for the ellipsoidal dynamics compared to the aforementioned references. In all cases, the integration of a matrix differential equation is required. However, this involves only polynomial (in the number of states) computational complexity. In addition to these properties, we demonstrate that the approximating ellipsoids inherit the recursive feasibility and recursive infeasibility properties of the exact reachable set. Specifically, if the system lies within the under-approximating set at a given time, then there exists a constraint admissible control that leads the system to the under-approximating set for any future time. Analogously, if the system is initialised outside the over-approximating set, then it is impossible for the system to be driven into the over-approximating set for any future time. Thus, these approximating sets preserve the intended purpose of reachable sets in control applications. 

The remainder of this paper is organised as follows. In Section \ref{sec: background theory}, we review the necessary preliminaries on HJB reachability analysis. In Section \ref{sec: approximate reachability}, we generate approximate reachability results for time-varying nonlinear systems. We then specialise to LTV systems in Section \ref{sec: LTV reachability}, leading to the development of ellipsoidal approximation schemes. Illustrative numerical examples are then considered in Section \ref{sec: numerical examples}.
\section*{Notation}
\begin{itemize}
    \item  Let $\N\doteq \{1,2,\ldots\}$ denote the set of natural numbers and let $\R_{>0}$ denote the set of positive real numbers. 
    \item We use $\langle \cdot, \cdot\rangle$ to denote the Euclidean inner product and $\norm{\cdot}_2\doteq \iprod{\cdot}{\cdot}^\half$ to denote the corresponding norm. 
    \item We use $\norm{\cdot}$ to denote any norm of a vector.
    \item  We use $\cA^c, \text{int}\left(\cA\right), \text{cl}\left(\cA\right)$, and $\partial \cA$ to denote the complement, interior, closure, and boundary of a set $\cA$, respectively.
    \item We say that set $\cA$ under-approximates (resp. over-approximates) set $\cB$ if $\cA$ is a non-strict subset (resp. superset) of $\cB$, i.e., $\cA\subseteq \cB$ (resp. $\cA \supseteq \cB$). 
    \item The space of $k$-times continuously differentiable functions from $\Omega$ to $\cA$ is denoted by $\cC^k(\Omega;\,\cA)$ with $\cC(\Omega;\,\cA) \doteq \cC^0(\Omega;\,\cA)$ denoting the space of continuous functions from $\Omega$ to $\cA$. Unless stated otherwise, if $\cA$ is a matrix space, continuity is defined with respect to the induced Euclidean norm.
    \item The pre-image of a set $\cA_0\subset \cA$ under $v:\Omega \rightarrow \cA$ is denoted by $v^{-1}\left(\cA_0 \right) \doteq \{ \omega \in \Omega\, |\, v(\omega) \in \cA_0 \}$.
    \item An open ball of radius $R > 0$ centred at $x_0 \in \R^n$ is denoted by $\B_R(x_0)\doteq \{x\in\R^n\,|\,\norm{x - x_0} < R\}$. 
    \item  We use $A \succ  0$ (resp. $A\succeq 0$) to say that a matrix $A\in\R^{n\times n}$ is positive definite (resp. semi-definite).
    \item The set of symmetric, symmetric positive definite, and symmetric positive semi-definite matrices in $\R^{n\times n}$ is denoted by $\sym{n}$, $\spd{n}$, and $\spsd{n}$, respectively.
    \item The set of orthogonal matrices in $\R^{n\times n}$ is denoted by $\mathbb{O}^n$.
    \item  We use $\mathbb{I}$ to denote the identity matrix whose size can be inferred from context.
    \item We denote by $\underline{\Lambda}(A)$ the smallest eigenvalue of $A\in\sym{n}$.
    \item Finally, the square root of $A \in \spsd{n}$ is a square matrix denoted by $A^{\half}\in \spsd{n}$ and satisfies $A = A^{\half}A^{\half}$.
\end{itemize}

\section{Background Theory}
\label{sec: background theory}
\subsection{Hamilton-Jacobi-Bellman Reachability Analysis}
\label{subsec: HJ reachability background}
Fix $T \geq t \geq 0 $, $x\in\R^n$, and consider a finite dimensional time-varying nonlinear system evolving in continuous time, with trajectories satisfying the Cauchy problem
\begin{equation}
    \begin{split}
    \dot{x}(s) &= f(s, x(s), u(s)), \quad \text{a.e. } s \in (t,T),\\
    x(t) &= x,
    \end{split}
    \label{eq: background: nonlinear system}
\end{equation}
where $x(s)\in\mathbb{R}^n$ is the state and $u(s)\in\U$ is the input, both at time $s$, with $\U\subset \mathbb{R}^m$ assumed to be compact and non-empty. The control input is selected such that $u\in\cU[t,T]$, where 
\begin{equation}
    \cU[t,T] \doteq \left\{u:[t,T]\rightarrow \mathbb{U}\;| \;u \text{ measurable} \right\}.
    \label{eq: background: admissible controls}
\end{equation}
Additionally, we assume the following for $f$ throughout. 
\begin{assumption}
The function $f:[0,T]\times\mathbb{R}^n\times\mathbb{U}\rightarrow\mathbb{R}^n$ satisfies
\begin{enumerate}[i)]
\item \label{item: background: flow field conditions item 1} $f\in\cC\left([0,T]\times\mathbb{R}^n\times\mathbb{U};\, \mathbb{R}^n\right)$; and
    \item \label{item: background: flow field conditions item 2} $f$ is Lipschitz continuous in $x$, uniformly in $s$ and $u$, that is, there exists a Lipschitz constant $L_{f} \geq 0$ such that $\norm{f(s,x,u)- f(s,y,u)} \leq L_{f} \norm{x-y}$ for all $(s,x,y,u)\in[0,T]\times\R^n\times\R^n\times\U$.
\end{enumerate}
\label{assumption: background: flow field conditions}
\end{assumption}

Under Assumption \ref{assumption: background: flow field conditions}, the dynamics in \eqref{eq: background: nonlinear system} admit a unique and continuous solution for any fixed control $u\in\cU[t,T]$ (see \cite[Theorem 3.2, p.93]{khalil2002nonlinear}). We denote these solutions at time $s \in [t,T]$ by $\varphi(s ;t, x, u)$. In order to consider closed terminal/initial sets, an additional assumption for compactness of solutions of \eqref{eq: background: nonlinear system} is required, see for example \cite[Theorem 7.1.6 and Remarks 7.1.7-7.1.8, pp.190-191]{CS:04}. 
\begin{assumption} For any $x\in\R^n$ and $s\in[0,T]$, the set $f(s, x, \U) \doteq \{f(s,x,u)\,|\, u \in \U\}$ is convex.
    \label{assumption: background: convex flow field}
\end{assumption}
\begin{remark}
    If \eqref{eq: background: nonlinear system} is control affine, that is, $f:[0,T]\times\R^n\times\U\rightarrow\R$ can be decomposed as $f(s,x,u) = f_1(s,x) + f_2(s,x)u$ and $\U$ is convex then Assumption \ref{assumption: background: convex flow field} is satisfied. 
    \label{remark: background: convexity of control affine systems}
\end{remark}

The backwards reachable set for \eqref{eq: background: nonlinear system}, which we denote by $\mathcal{G}(t)$, is defined as the set of states at time $t$ for which there exists a control $u\in\cU[t,T]$ that leads the system to a set $\mathcal{X}$ at the terminal time $T$. We assume that $\mathcal{X}$ can be described by the zero sublevel set of a real-valued function, in particular,
\begin{equation}
    \mathcal{X} \doteq \left\{x\in\mathbb{R}^n \, |\, g(x) \leq 0 \right\},
    \label{eq: background: closed terminal set}
\end{equation}
where $g\in\cC(\R^n;\,\R)$. We define $\mathcal{G}(t)$ more precisely below.
\begin{definition} Consider the terminal set $\mathcal{X}$ defined in \eqref{eq: background: closed terminal set}. The backwards reachable set $\mathcal{G}(t) = \mathcal{G}(t;\mathcal{X})$ of \eqref{eq: background: nonlinear system} at time $t\in[0,T]$ is defined by 
\begin{equation}
    \mathcal{G}(t)  \doteq \left\{x\in\mathbb{R}^n \, | \, \exists\, u\in\cU[t,T] \text{ s.t. } \varphi(T; t, x, u) \in \mathcal{X} \right\}.
    \label{eq: background: BRS}
\end{equation}
\vspace{-1em}
\label{def: background: BRS}
\end{definition}

HJB reachability characterises $\mathcal{G}(t)$ as the zero sublevel set of a value function $v: [0,T]\times \mathbb{R}^n\rightarrow \R$ corresponding to the Mayer optimal control problem given by
\begin{equation}
    v(t,x) \doteq \inf_{u\in\cU[t,T]}g\left(\varphi(T; t, x, u)\right),
    \label{eq: background: value function}
\end{equation}
which in turn is the viscosity solution to a HJB PDE \cite[Theorem 7.2.4, p.194]{CS:04}. Before we state this result, let us attach to \eqref{eq: background: nonlinear system} a Hamiltonian $H:[0,T]\times\mathbb{R}^n\times \mathbb{R}^n \rightarrow \mathbb{R}$ given by
\begin{equation}
    H(t,x,p)\doteq \max_{u\in\U}\iprod{-p}{f(t,x, u)}.
    \label{eq: background: HJB Hamiltonian}
\end{equation}
We recall the disturbance-free case of \cite[Lemma 8]{mitchell2005time} below. 
\begin{theorem} Let $f:[0,T]\times\R^n\times\U\rightarrow\R$ satisfy Assumptions \ref{assumption: background: flow field conditions} and \ref{assumption: background: convex flow field}. Consider the HJB PDE given by 
\begin{equation}
\begin{split}
    -v_t + H(t, x, \nabla v) &= 0, \quad \forall (t,x)\in (0,T)\times\mathbb{R}^n, \\
    v(T,x) &= g(x), \quad \forall x\in\mathbb{R}^n,
    \end{split}
    \label{eq: background: HJB PDE with terminal condition BRS}
\end{equation}
where $g\in\cC\left(\R^n;\,\R\right)$ defines the terminal set $\mathcal{X}$ in \eqref{eq: background: closed terminal set}, and the Hamiltonian $H$ is defined as in \eqref{eq: background: HJB Hamiltonian}. Then, \eqref{eq: background: HJB PDE with terminal condition BRS} admits the unique viscosity solution $v\in\cC([0,T] \times\mathbb{R}^n\,;\mathbb{R})$ given by \eqref{eq: background: value function} and the backwards reachable set for \eqref{eq: background: nonlinear system} at time $t\in[0,T]$ is
\begin{equation*}
    \mathcal{G}(t) = \left\{x\in\mathbb{R}^n \; | \; v(t, x) \leq 0\right\}.
\end{equation*}
\vspace{-1em}
\label{theorem: background: HJB equation for BRS}
\end{theorem}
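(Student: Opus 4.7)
The plan is to split Theorem \ref{theorem: background: HJB equation for BRS} into two essentially independent sub-claims: (i) the value function $v$ defined in \eqref{eq: background: value function} is the unique viscosity solution of the HJB problem \eqref{eq: background: HJB PDE with terminal condition BRS} inside $\cC([0,T]\times\R^n;\R)$; and (ii) $\mathcal{G}(t)$ coincides with the zero sublevel set $\{x\in\R^n \mid v(t,x)\leq 0\}$. Only (i) requires the HJB machinery, and only (ii) actually needs Assumption \ref{assumption: background: convex flow field}, so I would treat them in sequence and assemble the result.

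For (i), I would first establish the dynamic programming principle
\[
    v(t,x) = \inf_{u\in\cU[t,t+h]} v\bigl(t+h,\,\varphi(t+h;\,t,x,u)\bigr), \qquad 0 \leq t \leq t+h \leq T,
\]
by concatenating admissible controls on $[t,t+h]$ and $[t+h,T]$ and using uniqueness of $\varphi$ past the intermediate time. Gronwall bounds on $\varphi(s;t,x,u)$ in $(t,x)$ uniform in $u$ (inherited from Assumption \ref{assumption: background: flow field conditions}) together with continuity of $g$ then give continuity of $v$ on $[0,T]\times\R^n$. The viscosity sub- and supersolution inequalities for \eqref{eq: background: HJB PDE with terminal condition BRS} are extracted by the standard test-function argument: fix a smooth $\phi$ touching $v$ at an interior $(t_0,x_0)$, apply the DPP over a short interval $[t_0,t_0+h]$ with constant controls $u\in\U$, Taylor-expand $\phi$ along the induced trajectory, let $h\downarrow 0$, and then optimise over $u\in\U$ to recover $-\phi_t + H(t_0,x_0,\grad\phi)=0$. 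The terminal condition $v(T,x)=g(x)$ is immediate from \eqref{eq: background: value function}. Uniqueness in $\cC([0,T]\times\R^n;\R)$ then follows from a standard comparison principle for first-order Hamilton--Jacobi equations, which applies because $H$ is continuous, convex and positively homogeneous in $p$, and Lipschitz in $x$ uniformly for $p$ in bounded sets; this is the argument of \cite[Chap.~V]{CS:04}.

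For (ii), the inclusion $\mathcal{G}(t) \subseteq \{x\in\R^n\mid v(t,x)\leq 0\}$ is immediate: any $u\in\cU[t,T]$ with $\varphi(T;t,x,u)\in\mathcal{X}$ satisfies $g(\varphi(T;t,x,u))\leq 0$, so the infimum is at most $0$. The reverse inclusion requires attainability of the infimum in \eqref{eq: background: value function}. Given $v(t,x)\leq 0$, I would pick a minimising sequence $(u_k)\subset\cU[t,T]$ with $g(\varphi(T;t,x,u_k))\to v(t,x)$. Under Assumptions \ref{assumption: background: flow field conditions} and \ref{assumption: background: convex flow field}, a Filippov--Cellina-type compactness result (as invoked in \cite[Theorem 7.1.6 and Remarks 7.1.7--7.1.8, pp.~190--191]{CS:04}) ensures that the trajectory funnel is relatively compact in $\cC([t,T];\R^n)$ and that its closure is itself a set of admissible trajectories of \eqref{eq: background: nonlinear system}. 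Extracting a uniformly convergent subsequence yields an admissible $u^*\in\cU[t,T]$ whose trajectory hits the limit point $\varphi(T;t,x,u^*)$; continuity of $g$ then gives $g(\varphi(T;t,x,u^*))\leq 0$, hence $x\in\mathcal{G}(t)$.

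The main obstacle is precisely the attainment argument in (ii): without Assumption \ref{assumption: background: convex flow field}, the trajectory funnel need not be closed, and one could only conclude $\mathcal{G}(t) \subseteq \{v(t,\cdot)\leq 0\} \subseteq \text{cl}\left(\mathcal{G}(t)\right)$, which is insufficient for an exact sublevel-set characterisation. Part (i) is by contrast a careful traversal of classical dynamic programming and viscosity theory, and once (ii) is in hand the whole statement reduces to packaging \cite[Theorem 7.2.4, p.~194]{CS:04} with the reachability-specific remark that $\{v(t,\cdot)\leq 0\}$ is exactly $\mathcal{G}(t)$.
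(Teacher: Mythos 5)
Your outline is essentially a reconstruction of the standard proof that the paper itself does not reproduce: Theorem \ref{theorem: background: HJB equation for BRS} is recalled as the disturbance-free case of \cite[Lemma 8]{mitchell2005time}, with the viscosity characterisation taken from \cite[Theorem 7.2.4, p.194]{CS:04} and uniqueness from \cite[Theorem 3.17, p.159]{bardi1997optimal} together with the remarks and exercises covering the non-autonomous case. Your decomposition into (i) ``$v$ is the unique continuous viscosity solution'' and (ii) ``$\mathcal{G}(t)$ equals the zero sublevel set of $v(t,\cdot)$'' matches that chain, and your part (ii) correctly isolates the only place where Assumption \ref{assumption: background: convex flow field} is needed, namely attainment of the infimum in \eqref{eq: background: value function} via compactness of the trajectory funnel; this is also exactly how the paper uses Assumption \ref{assumption: background: convex flow field} elsewhere (its appendix proof of Lemma \ref{lemma: approx reach: value function satisfies attached sublevel set} invokes Theorem 7.1.4 of \cite{CS:04} for existence of a minimising control). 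The one-sided inclusion $\mathcal{G}(t)\subseteq\{v(t,\cdot)\leq 0\}$ and your observation that without convexity one only controls the closure are both correct.

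One step as written would not go through: constant controls in the dynamic programming principle deliver only the \emph{subsolution} half. For $\xi$ with $v-\xi$ attaining a local maximum, the inequality $v(t_0,x_0)\leq v\bigl(t_0+h,\varphi(t_0+h;t_0,x_0,u)\bigr)$ for each fixed $u\in\U$ yields, after expanding $\xi$ and maximising over $u$, the bound $-\xi_t+H(t_0,x_0,\grad\xi)\leq 0$. The supersolution inequality at a local minimum point uses the opposite direction of the DPP, which is an infimum over genuinely time-varying controls; there you must choose $h$-dependent, $\epsilon$-optimal measurable controls $u_h$ and pass to the limit using the Gronwall estimates uniform in $u$ — no choice of constant control can produce $-\xi_t+H\geq 0$. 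Separately, the hypotheses you cite for comparison (convexity and positive homogeneity of $H$ in $p$) are not what drives it; what is needed is the structure $|H(t,x,p)-H(t,y,p)|\leq L_f\norml{p}\,\norm{x-y}$ together with a linear growth bound on $f$, both of which follow from Assumption \ref{assumption: background: flow field conditions} and compactness of $\U$, consistent with the conditions verified in \cite{bardi1997optimal}. With these two repairs your proposal is a complete and correct proof of the cited result.
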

Note that uniqueness of solutions of \eqref{eq: background: HJB PDE with terminal condition BRS} follows from \cite[Theorem 3.17, p.159]{bardi1997optimal} with the extension to the non-autonomous setting following from \cite[Remark 3.10, pp.154-155]{bardi1997optimal} and \cite[Exercises 3.6-3.7, pp.182-183]{bardi1997optimal}.

An analogous set to the backwards reachable set $\mathcal{G}(t)$ is the forwards reachable set $\mathcal{F}(t)$, which characterises all states that can be reached by time $T\geq t$ from the set $\mathcal{X}$ starting at time $t$ under the influence of a constraint admissible control. The forwards reachable set is more precisely defined below.
\begin{definition} Consider the terminal set $\mathcal{X}$ defined in \eqref{eq: background: closed terminal set}. The forwards reachable set $\mathcal{F}(t) = \mathcal{F}(t;\mathcal{X})$ of \eqref{eq: background: nonlinear system} at time $t\in[0,T]$ is defined by
\begin{equation}
    \mathcal{F}(t)  \doteq \{\varphi(T; t, x, u)\in\mathbb{R}^n \, | \, u\in\cU[t,T] \text{ and } x \in \mathcal{X} \}.
    \label{eq: background: FRS}
\end{equation}
\vspace{-1em}
\label{def: background: FRS}
\end{definition}

It turns out that the forwards reachable set is exactly the backwards reachable set of the time-reversed system given by
\begin{equation}
\begin{split}
    \dot{x}(s) &= -f(T+t-s, x(s), u(s)), \quad\text{a.e. }s \in (t,T), \\
    x(t) &= x,
\end{split}
    \label{eq: background: reversed nonlinear system}
\end{equation}
where $f$ is given in \eqref{eq: background: nonlinear system}. The following theorem states this relationship. We omit the proof for brevity, but the autonomous case appears in Proposition 6 of \cite{mitchellforwardandbackward:07} and similar arguments to Theorem 4 of \cite{alla2023tree} can be used to demonstrate this result. 
\begin{lemma}
Let $\mathcal{G}_-(t)$ denote the backwards reachable set defined in \eqref{eq: background: BRS} for the time-reversed system in \eqref{eq: background: reversed nonlinear system} and let $\mathcal{F}(t)$ denote the forwards reachable set as defined in \eqref{eq: background: FRS}. Then, $\mathcal{F}(t) = \mathcal{G}_-(t)$ for all $t\in[0,T]$.
\label{lemma: background: FRS from BRS}
\end{lemma}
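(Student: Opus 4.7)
The plan is a standard time-reversal argument that establishes a bijection between trajectories of the original system on $[t,T]$ and trajectories of the reversed system on $[t,T]$, then shows that this bijection maps the defining conditions of $\mathcal{F}(t)$ to those of $\mathcal{G}_-(t)$ and vice versa.

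First, I would prove the inclusion $\mathcal{F}(t) \subseteq \mathcal{G}_-(t)$. Let $y \in \mathcal{F}(t)$, so by Definition \ref{def: background: FRS} there exist $x_0 \in \mathcal{X}$ and $u \in \cU[t,T]$ with $\varphi(T;t,x_0,u) = y$. Write $x(s) \doteq \varphi(s;t,x_0,u)$ and define the time-reversed curve and control
\begin{equation*}
\tilde{x}(s) \doteq x(T+t-s), \qquad \tilde{u}(s) \doteq u(T+t-s), \quad s \in [t,T].
\end{equation*}
The map $s \mapsto T+t-s$ is an affine involution of $[t,T]$, so $\tilde{u}$ is measurable into $\U$ and hence $\tilde{u} \in \cU[t,T]$. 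A chain rule computation shows $\dot{\tilde{x}}(s) = -\dot{x}(T+t-s) = -f(T+t-s,\tilde{x}(s),\tilde{u}(s))$ almost everywhere on $(t,T)$, with $\tilde{x}(t) = x(T) = y$, so $\tilde{x}$ is a (hence, under Assumption \ref{assumption: background: flow field conditions} applied to the reversed flow, the unique) solution of \eqref{eq: background: reversed nonlinear system} starting at $y$. At terminal time, $\tilde{x}(T) = x(t) = x_0 \in \mathcal{X}$, which means $\varphi_-(T;t,y,\tilde{u}) \in \mathcal{X}$ and therefore $y \in \mathcal{G}_-(t)$ by Definition \ref{def: background: BRS} applied to \eqref{eq: background: reversed nonlinear system}.

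The reverse inclusion $\mathcal{G}_-(t) \subseteq \mathcal{F}(t)$ is symmetric. Given $y \in \mathcal{G}_-(t)$, pick $\tilde{u} \in \cU[t,T]$ with $\tilde{x}(T) \doteq \varphi_-(T;t,y,\tilde{u}) \in \mathcal{X}$, set $x_0 \doteq \tilde{x}(T)$ and reverse time again by defining $x(s) \doteq \tilde{x}(T+t-s)$ and $u(s) \doteq \tilde{u}(T+t-s)$; the same computation shows $x$ solves \eqref{eq: background: nonlinear system} with $x(t) = x_0 \in \mathcal{X}$ under control $u \in \cU[t,T]$, and $x(T) = \tilde{x}(t) = y$, so $y = \varphi(T;t,x_0,u) \in \mathcal{F}(t)$.

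The only non-routine point is making sure the reversed vector field $(s,x,u) \mapsto -f(T+t-s,x,u)$ inherits Assumption \ref{assumption: background: flow field conditions}, so that $\varphi_-$ is well defined; this is immediate since time-reflection and negation preserve joint continuity and the Lipschitz constant in $x$. Consequently there is no genuine analytic obstacle; the main thing to get right is bookkeeping of the reparametrisation and the measurability of $\tilde{u}$, both of which are straightforward. Together the two inclusions give $\mathcal{F}(t) = \mathcal{G}_-(t)$ for every $t \in [0,T]$.
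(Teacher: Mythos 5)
Your proof is correct. The paper itself omits a proof of this lemma, deferring to the cited references (the autonomous case in Proposition 6 of \cite{mitchellforwardandbackward:07} and the arguments of Theorem 4 in \cite{alla2023tree}), and your time-reversal bijection $s\mapsto T+t-s$ between trajectories of \eqref{eq: background: nonlinear system} and \eqref{eq: background: reversed nonlinear system} is exactly the standard argument those references rely on; you also correctly handle the only delicate points, namely measurability of the reparametrised control, preservation of Assumption \ref{assumption: background: flow field conditions} by the reversed vector field so that $\varphi_-$ is well defined and unique, and the endpoint bookkeeping $\tilde{x}(t)=x(T)$, $\tilde{x}(T)=x(t)$ in both inclusions.
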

The remainder of this paper will focus on the backwards reachable set, noting that analogous results for forwards reachability can be produced via Lemma \ref{lemma: background: FRS from BRS}.

\subsection{Viscosity Solutions of HJB Equations}
\label{subsec: viscosity solutions background}
To handle approximating reachable sets that are characterised by potentially non-smooth manifolds, we will need to consider supersolutions and subsolutions of \eqref{eq: background: HJB PDE with terminal condition BRS} in the viscosity sense. Accordingly, we recall the necessary background and definitions associated with viscosity solutions, which we adapt from \cite{crandall1983viscosity}. Consider the HJB equation given by
\begin{equation}
    -v_t + H(t,x,\nabla v) = 0, \quad\forall (t,x) \in \Omega,
    \label{eq: background: HJ}
\end{equation}
where $T >  0$ is a priori fixed, $\Omega\subseteq (0,T)\times \mathbb{R}^n$ is an open set, and $H:\Omega \times \mathbb{R}^n \rightarrow \mathbb{R}$ is a continuous function (which corresponds to \eqref{eq: background: HJB Hamiltonian} in our work). 
\begin{definition}
A function $v\in \cC\left([0,T]\times\R^n;\,\mathbb{R}\right)$ is a continuous viscosity supersolution (resp. subsolution) of \eqref{eq: background: HJ} if, for every $\xi\in \cC^1\left(\Omega;\,\mathbb{R}\right)$ that results in $v-\xi$ attaining a local minimum (resp. local maximum) at $(t_0,x_0)\in \Omega$, 
\begin{equation}
    -\xi_t(t_0,x_0) + H(t_0,x_0,\nabla \xi(t_0,x_0)) \geq 0 \quad (\text{resp. } \leq 0).
    \label{eq: background: visc supersolution and subsolution}
\end{equation}
Additionally, we say that $v$ is a viscosity solution of \eqref{eq: background: HJ} if it is both a viscosity supersolution and subsolution of \eqref{eq: background: HJ}.
\label{def: background: visc supersolution and subsolution}
\end{definition}

The definitions given in \eqref{eq: background: visc supersolution and subsolution} utilise a variational description of Fr{\'e}chet subdifferentials and superdifferentials (whose elements are called Fr{\'e}chet subgradients and supergradients). Consequently, it is possible to define viscosity supersolutions and subsolutions of \eqref{eq: background: HJ} in terms of these subdifferentials and superdifferentials, which are defined below according to \cite{kruger2003frechet}.
\begin{definition}
A vector $p\in\R^{n+1}$ is a subgradient (resp. supergradient) of $v\in\cC\left([0,T]\times\R^n;\,\mathbb{R}\right)$ at $(t_0, x_0)\in\Omega$ if\small
\begin{align*}
    &\liminf_{(t,x)\rightarrow(t_0,x_0)}\frac{v(t,x)-v(t_0,x_0)-\iprod{p}{(t,x)-(t_0,x_0)}}{\norm{(t,x)-(t_0,x_0)}}  \geq 0, \\
    &\limsup_{(t,x)\rightarrow(t_0,x_0)}\frac{v(t,x)-v(t_0,x_0)-\iprod{p}{(t,x)-(t_0,x_0)}}{\norm{(t,x)-(t_0,x_0)}}\leq 0. \quad (\text{resp.})
\end{align*}\normalsize
The set of all such $p\in\R^{n+1}$ satisfying the above is called the subdifferential (resp. superdifferential) of $v$ at $(t_0, x_0)$, which we denote by $D^-v(t_0, x_0)$ (resp. $D^+v(t_0,x_0)$).
\label{def: background: reg subdifferentials and superdifferentials}
\end{definition}
The subdifferential (resp. superdifferential) of $v$ at a point characterise the set of gradients that any continuously differentiable function $\xi$ can take at that point if $\xi$ were to touch $v$ from below (resp. above). This result comes from \cite[Proposition 8.5, p.302]{rockafellar2009variational}, which we state in the following.
\begin{lemma} A vector $p\in\R^{n+1}$ is an element of the subdifferential (resp. superdifferential) of $v\in\cC\left([0,T]\times\R^n;\,\mathbb{R}\right)$ at $(t_0, x_0)\in\Omega$ if and only if there exists a function $\xi\in\cC^1\left(\Omega;\,\R\right)$ such that $\left(\xi_t(t_0,x_0), \grad\xi(t_0, x_0)\right) = p$ and $v - \xi$ attains a local minimum (resp. local maximum) at $(t_0, x_0)$.
\label{lemma: background: variational description of sub and superdifferentials}
\end{lemma}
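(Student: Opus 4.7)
The plan is to prove the subdifferential case; the superdifferential case follows by applying it to $-v$ and $-p$, or by replicating the argument with reversed inequalities. The sufficiency direction is straightforward: if $\xi\in\cC^1(\Omega;\R)$ has $(\xi_t,\grad\xi)(t_0,x_0)=p$ and $v-\xi$ has a local minimum at $(t_0,x_0)$, a first-order Taylor expansion of $\xi$ about $(t_0,x_0)$ gives $\xi(t,x)-\xi(t_0,x_0)=\iprod{p}{(t,x)-(t_0,x_0)}+o(\norm{(t,x)-(t_0,x_0)})$. Combining this with the local-minimum inequality $v(t,x)-v(t_0,x_0)\ge \xi(t,x)-\xi(t_0,x_0)$ and dividing by $\norm{(t,x)-(t_0,x_0)}$ yields a ratio whose liminf is $\ge 0$, so $p\in D^-v(t_0,x_0)$ by Definition \ref{def: background: reg subdifferentials and superdifferentials}.

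For necessity, given $p\in D^-v(t_0,x_0)$ I would first extract a quantitative modulus from the liminf condition. Define
\begin{equation*}
\sigma(r)\doteq\max\!\left\{0,\,-\inf_{0<\norm{(t,x)-(t_0,x_0)}\le r}\frac{v(t,x)-v(t_0,x_0)-\iprod{p}{(t,x)-(t_0,x_0)}}{\norm{(t,x)-(t_0,x_0)}}\right\}\!,
\end{equation*}
which is non-decreasing, non-negative, and satisfies $\sigma(r)\to 0$ as $r\to 0^+$; consequently $v(t,x)-v(t_0,x_0)-\iprod{p}{(t,x)-(t_0,x_0)}\ge -\sigma(r)\,r$ with $r=\norm{(t,x)-(t_0,x_0)}$ for $(t,x)$ near $(t_0,x_0)$. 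I would then dominate $\sigma$ by a continuous non-decreasing $\tilde\sigma\ge\sigma$ with $\tilde\sigma(0)=0$ (via piecewise-linear interpolation through a sequence $r_n\downarrow 0$ with $\sigma(r_n)\downarrow 0$), and set $\rho(r)\doteq\int_0^{2r}\tilde\sigma(s)\,ds$. Then $\rho\in\cC^1([0,\infty);\R)$ with $\rho(0)=0$ and $\rho'(0)=2\tilde\sigma(0)=0$, and by monotonicity $\rho(r)\ge\int_r^{2r}\tilde\sigma(s)\,ds\ge r\,\tilde\sigma(r)\ge r\,\sigma(r)$. The candidate test function is
\begin{equation*}
\xi(t,x)\doteq v(t_0,x_0)+\iprod{p}{(t,x)-(t_0,x_0)}-\rho(\norm{(t,x)-(t_0,x_0)}),
\end{equation*}
which, on account of $\rho'(0)=0$, is $\cC^1$ in a neighborhood of $(t_0,x_0)$ with $(\xi_t,\grad\xi)(t_0,x_0)=p$, and for which $v-\xi$ attains a local minimum at $(t_0,x_0)$ because $(v-\xi)(t,x)-(v-\xi)(t_0,x_0)\ge -\sigma(r)\,r+\rho(r)\ge 0$ near $(t_0,x_0)$. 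A $\cC^1$ cutoff supported in that neighborhood then extends $\xi$ to all of $\Omega$ while preserving the gradient at $(t_0,x_0)$ and the local-minimum property.

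I expect the main obstacle to be verifying the regularity of the radial term: the map $(t,x)\mapsto \rho(\norm{(t,x)-(t_0,x_0)})$ must be genuinely $\cC^1$ in a neighborhood of $(t_0,x_0)$, not merely differentiable at the origin, which forces $\rho'(r)\to 0$ as $r\to 0^+$ and is the reason for choosing $\tilde\sigma$ continuous with $\tilde\sigma(0)=0$. Constructing such a $\tilde\sigma$ from a $\sigma$ that is only non-decreasing (and possibly only lower semi-continuous) is the secondary technical point; it is standard, proceeding by interpolating through a sufficiently sparse sequence along which $\sigma$ can be majorised by a prescribed sequence tending to zero.
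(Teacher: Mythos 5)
Your proof is correct: the sufficiency direction via Taylor expansion and the necessity direction via the modulus $\sigma$, its continuous non-decreasing majorant $\tilde\sigma$, the integrated penalty $\rho(r)=\int_0^{2r}\tilde\sigma(s)\,ds$ (giving $\rho\in\cC^1$, $\rho(r)\geq r\,\sigma(r)$, $\rho'(0)=0$), and a cutoff to extend $\xi$ to all of $\Omega$ is exactly the standard construction. The paper itself does not prove this lemma but cites it (Rockafellar--Wets, Proposition 8.5), and your argument is essentially the same proof that underlies the cited result, so there is nothing to add beyond noting that your treatment of the $\cC^1$ regularity of the radial term is the right point to be careful about and is handled correctly.
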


Finally, we recall a classic global comparison result for viscosity supersolutions and subsolutions of \eqref{eq: background: HJB PDE with terminal condition BRS} from \cite[Theorem 3.15, p.158]{bardi1997optimal}. We provide the time-varying version below for the backwards HJB PDE (see \cite[Remark 3.10, pp.154-155]{bardi1997optimal} and \cite[Exercises 3.6-3.7, pp.182-183]{bardi1997optimal}).
\begin{theorem} Let $f$ in \eqref{eq: background: nonlinear system} satisfy Assumption \ref{assumption: background: flow field conditions} and let $v_1,v_2\in\cC([0,T]\times\R^n;\,\R)$ be, respectively, a viscosity supersolution and subsolution of \eqref{eq: background: HJ} over the domain $\Omega = (0,T)\times\R^n $ with the Hamiltonian $H$ given by \eqref{eq: background: HJB Hamiltonian}. If $v_1(T,x) \geq v_2(T,x)$ for all $x\in \R^n$, then, $v_1(t,x) \geq v_2(t,x)$ for all $(t,x)\in \rm{cl}\left(\Omega\right)$.
\label{theorem: background: global comparison result}
\end{theorem}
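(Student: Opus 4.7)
The plan is to adapt the classical Crandall--Lions doubling-of-variables argument, as organised by Bardi and Capuzzo Dolcetta, to the backward-in-time, time-varying setting. I would argue by contradiction: assume $\sigma := \sup_{[0,T]\times\R^n}(v_2 - v_1) > 0$. Since $v_1, v_2 \in \cC([0,T]\times\R^n;\,\R)$ and $v_2(T,\cdot) - v_1(T,\cdot) \leq 0$ by hypothesis, any near-maximiser of $v_2 - v_1$ must have $t < T$, and the goal is to extract enough information at such a point to contradict the viscosity inequalities.

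To simultaneously localise on the unbounded spatial domain and force maximisers into the open time strip, I would consider, for parameters $\epsilon, \alpha, \beta > 0$, the doubled functional
\begin{equation*}
    \Phi_{\epsilon,\alpha,\beta}(t,x,s,y) = v_2(t,x) - v_1(s,y) - \frac{\norml{x-y}^2 + (t-s)^2}{2\epsilon} - \frac{\alpha}{T-t} - \frac{\alpha}{T-s} - \beta\bigl(\norml{x}^2 + \norml{y}^2\bigr).
\end{equation*}
The $\beta$-term guarantees existence of a maximiser $(t_\epsilon, x_\epsilon, s_\epsilon, y_\epsilon)$ on the unbounded domain; the $\alpha$-barriers prevent $t_\epsilon, s_\epsilon$ from approaching $T$; and for $\alpha, \beta$ sufficiently small relative to $\sigma$, the supremum of $\Phi$ remains at least $\sigma/2$. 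A standard penalisation estimate (as in Bardi--Capuzzo Dolcetta, Lemma~3.1) then gives $\norml{x_\epsilon - y_\epsilon}^2/\epsilon \to 0$ and $(t_\epsilon - s_\epsilon)^2/\epsilon \to 0$ as $\epsilon \to 0$.

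At this maximiser, fixing $(s_\epsilon, y_\epsilon)$ makes the remainder of $\Phi$ a $\cC^1$ test function against which $v_2$ attains a local maximum, and fixing $(t_\epsilon, x_\epsilon)$ provides one against which $v_1$ attains a local minimum. Applying the subsolution property for $v_2$ and the supersolution property for $v_1$ and subtracting yields
\begin{equation*}
    \frac{\alpha}{(T-t_\epsilon)^2} + \frac{\alpha}{(T-s_\epsilon)^2} \;\leq\; H\bigl(s_\epsilon, y_\epsilon,\, p_\epsilon - 2\beta y_\epsilon\bigr) - H\bigl(t_\epsilon, x_\epsilon,\, p_\epsilon + 2\beta x_\epsilon\bigr),
\end{equation*}
where $p_\epsilon = (x_\epsilon - y_\epsilon)/\epsilon$. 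The right-hand side can be bounded using (i) Lipschitz continuity of $H$ in its spatial argument, which is inherited from Assumption~\ref{assumption: background: flow field conditions} through the bound $|H(t,x,p) - H(t,y,p)| \leq L_f \norml{p}\norml{x-y}$; (ii) joint continuity of $H$ in $(t,p)$, which supplies a modulus $\omega$ controlling the $(t_\epsilon, s_\epsilon)$-discrepancy; and (iii) the $\epsilon \to 0$ estimates above. This produces an inequality of the form $\alpha/(T-t_\epsilon)^2 \leq \norml{x_\epsilon - y_\epsilon}^2/\epsilon + \omega(|t_\epsilon - s_\epsilon|)\norml{p_\epsilon} + C\beta(\norml{x_\epsilon} + \norml{y_\epsilon})\norml{p_\epsilon}$. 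Sending $\epsilon \to 0$, then $\beta \to 0$, then $\alpha \to 0$ in that order contradicts the strict positivity of the left-hand side for fixed $\alpha > 0$.

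The principal obstacle is the unbounded spatial domain combined with the absence of any growth hypothesis on $v_1, v_2$: the $\beta$-penalty is indispensable for existence of the maximiser, but one must verify that $\beta(\norml{x_\epsilon} + \norml{y_\epsilon})\norml{p_\epsilon}$ remains controlled as $\beta \to 0$. This is handled by a bootstrap from the lower bound $\Phi_{\epsilon,\alpha,\beta} \geq \sigma/2$: together with continuity of $v_1, v_2$ this shows that, for each fixed $\alpha$, the maximisers cannot escape to spatial infinity uniformly in $\beta$. The time dependence of $f$ contributes only the additional modulus $\omega$ above, which vanishes with $|t_\epsilon - s_\epsilon|$ in the first limit and thus introduces no new difficulty beyond the autonomous case treated in Bardi--Capuzzo Dolcetta.
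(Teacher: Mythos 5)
Your overall strategy --- doubling of variables with a quadratic localisation and a time barrier --- is exactly the classical machinery behind the result; note that the paper itself offers no proof here, but simply recalls it from \cite[Theorem 3.15, Remark 3.10, Exercises 3.6--3.7]{bardi1997optimal}, so you are reconstructing the cited argument rather than diverging from the paper. However, as written your sketch does not close, because it is set up with forward-time sign conventions while \eqref{eq: background: HJ} is the backward equation $-v_t + H = 0$ with data at $t=T$. With your penalty $-\alpha/(T-t)-\alpha/(T-s)$, the test function for $v_2$ at the maximiser has $\xi_t(t_\epsilon,x_\epsilon) = (t_\epsilon-s_\epsilon)/\epsilon + \alpha/(T-t_\epsilon)^2$, and subtracting the two viscosity inequalities for $-v_t+H$ yields $H(t_\epsilon,x_\epsilon,p_\epsilon+2\beta x_\epsilon) - H(s_\epsilon,y_\epsilon,p_\epsilon-2\beta y_\epsilon) \leq \alpha/(T-t_\epsilon)^2 + \alpha/(T-s_\epsilon)^2$: the strictly positive $\alpha$-terms land on the harmless side, and no contradiction follows (your displayed inequality is what one gets for $v_t+H=0$, not for \eqref{eq: background: HJ}). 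Relatedly, your barrier guards the wrong endpoint: for the backward problem the terminal ordering at $t=T$ already handles maximisers there, whereas $t=0$ --- where neither the equation (the domain $(0,T)\times\R^n$ is open) nor any ordering is available --- is left unprotected, and nothing prevents $t_\epsilon=0$ or $s_\epsilon=0$. The standard repair is either to set $w_i(t,x)=v_i(T-t,x)$ and run the forward argument (this is precisely the time-reversal the paper's citation to the exercises in \cite{bardi1997optimal} is invoking), or to place the barriers as $-\alpha/t-\alpha/s$ and treat a maximiser at $t_\epsilon=T$ or $s_\epsilon=T$ via the terminal inequality and the moduli of continuity.

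There are also genuine gaps in the unbounded-domain bookkeeping. Existence of a maximiser of $\Phi_{\epsilon,\alpha,\beta}$ is not guaranteed by the quadratic $\beta$-penalty when $v_1,v_2$ are merely continuous (their difference may grow superquadratically), and your bootstrap from $\Phi\geq\sigma/2$ only gives $\beta(\norml{x_\epsilon}^2+\norml{y_\epsilon}^2) \leq v_2(t_\epsilon,x_\epsilon)-v_1(s_\epsilon,y_\epsilon)-\sigma/2$, which is vacuous without an a priori bound on $v_2-v_1$; in \cite{bardi1997optimal} this is exactly where boundedness/uniform continuity of the sub- and supersolutions enters. Moreover, for $H$ in \eqref{eq: background: HJB Hamiltonian}, Assumption \ref{assumption: background: flow field conditions} permits $f$ (hence the Lipschitz constant of $H(t,x,\cdot)$) to grow like $1+\norml{x}$, so the $\beta$-error is of order $\beta(1+\norml{x_\epsilon})\norml{x_\epsilon} + \beta(1+\norml{y_\epsilon})\norml{y_\epsilon}$, not $C\beta(\norml{x_\epsilon}+\norml{y_\epsilon})\norml{p_\epsilon}$; the $t$-modulus of $H$ is likewise not uniform over unbounded $x$; and $\omega(|t_\epsilon-s_\epsilon|)\norml{p_\epsilon}$ need not vanish, since the penalisation estimate gives $\norml{p_\epsilon}\,\norml{x_\epsilon-y_\epsilon}\to 0$ but not boundedness of $\norml{p_\epsilon}$. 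These are all fixable with standard devices (time reversal, the structure condition $|H(t,x,p)-H(s,y,p)|\leq \omega\big((|t-s|+\norml{x-y})(1+\norml{p})\big)$, and a growth or boundedness class for the solutions), but as stated the limit order $\epsilon\to 0$, $\beta\to 0$, $\alpha\to 0$ does not deliver the asserted contradiction.
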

\section{Approximating Reachable Sets}
\label{sec: approximate reachability}
With the preliminaries introduced, we will now demonstrate that viscosity supersolutions and subsolutions of \eqref{eq: background: HJB PDE with terminal condition BRS} defined over a local domain can be used to produce under- and over-approximating backwards reachable sets for \eqref{eq: background: nonlinear system}. Before doing so, let us motivate and highlight the main idea behind this approach. Consider the functions $\overline{v}\in \cC([0,T]\times\R^n;\,\R)$ and $\underline{v}\in\cC([0,T]\times\R^n;\,\R)$, which are, respectively, a viscosity supersolution and subsolution of  
\begin{equation}
    -v_t + H(t,x,\nabla v) = 0, \quad \forall(t,x)\in\Omega, 
    \label{eq: approx reach: HJB PDE}
\end{equation}
where $\Omega \doteq (0,T)\times\R^n$ and $H$ is given by \eqref{eq: background: HJB Hamiltonian}. From Theorem \ref{theorem: background: global comparison result}, it follows that if $\overline{v}(T,x)\geq g(x) \geq \underline{v}(T,x)$ for all $x\in\R^n$, then, $\overline{v}(t,x) \geq v(t,x) \geq \underline{v}(t,x)$ for all $(t,x)\in \text{cl}\left(\Omega\right)$ where the value function $v:[0,T]\times\R^n\rightarrow\R$ as defined in \eqref{eq: background: value function} is the continuous viscosity solution of \eqref{eq: approx reach: HJB PDE}. Consequently, the zero sublevel set of $x\mapsto\overline{v}(t,x)$ and $x\mapsto \underline{v}(t,x)$ can then be used to produce under- and over-approximations of the backwards reachable set. In particular,
\begin{align*}
    \{x\in\R^n\,|\,\overline{v}(t,x) \leq 0\} \subseteq \overbrace{\{x\in\R^n\,|\,v(t,x) \leq 0\}}^{\mathcal{G}(t)} \subseteq \{x\in\R^n\,|\,\underline{v}(t,x) \leq 0\}.
\end{align*}
However, the issue with using \eqref{eq: approx reach: HJB PDE} is that it imposes the supersolution and subsolution condition for all points in $\Omega$, which can be overly restrictive since $\mathcal{G}(t)$ is characterised by points \emph{only} in the zero sublevel set of $v$. The remainder of this section explores the case when $\Omega$ is chosen as either of the pre-images $\overline{v}^{-1}((-\infty ,0))$ or $\underline{v}^{-1}((0 ,\infty))$. This will mean that $\overline{v}$ and $\underline{v}$ form upper and lower bounds for $v$ only over their zero sublevel set and superlevel set, respectively. 

\subsection{Under-approximating Backwards Reachable Sets}
Let us now demonstrate that viscosity supersolutions $\overline{v}$ of \eqref{eq: approx reach: HJB PDE} defined over its zero sublevel set can be used to produce under-approximating backwards reachable sets. Although it may be possible to proceed with local comparison results such as \cite[Theorem 9.1, p.90]{FS:06}, these results require a priori knowledge that $\overline{v}(t,x) \geq v(t,x)$ along the boundary of the local domain. Instead, we will demonstrate that a truncation of $\overline{v}$ from above by 0, which is defined by
\begin{equation}
    \overline{v}^{\times}(t,x) \doteq \min\{\overline{v}(t,x), \,0\}, \quad \forall (t,x)\in[0,T]\times\R^n,
    \label{eq: approx reach: global extension supersolution truncated value function}
\end{equation}
is a viscosity supersolution of \eqref{eq: approx reach: HJB PDE} over $(t,x)\in(0,T)\times\R^n$. Theorem \ref{theorem: background: global comparison result} can then be used to conclude that $\overline{v}^\times(t,x) \geq v(t,x)$, which also holds for $\overline{v}$ across all points in the \emph{strict} zero sublevel set of $x\mapsto\overline{v}^\times(t,x)$. We will require that points in the zero level set of $x\mapsto\overline{v}(t,x)$ lie arbitrarily close to points in its strict zero sublevel set. For convenience, we define a class of functions for which this property holds. 
\begin{definition} A function $z:\R^n\rightarrow \R$ is said to belong to class $\cZ$ if for every $\Bar{x}\in z^{-1}(\{0\})$ and for every $r>0$, there exists an $x_{<0}\in\B_r(\Bar{x})$ with $z(x_{<0})<0$.
\label{def: approx reach: zero regular continuous function}
\end{definition}

Class $\cZ$ functions have the property that the closure of its strict zero sublevel set is given by its non-strict zero sublevel set and the interior of the latter set is its strict zero sublevel set. Note also that if the terminal data $g$ is of class $\cZ$ then so is the value function in \eqref{eq: background: value function}, which is a result needed in subsequent sections. The following two lemmas state these results precisely with their proofs postponed to the Appendix. 

\begin{lemma}
If $z\in\cC(\R^n;\,\R)$ is of class $\cZ$, then\footnote{In general the zero level set of $z$ may include more points than required to close its strict zero sublevel set $z^{-1}((-\infty,0))$. This occurs, for instance, when the zero level set of $z$ exhibits flat regions or when the graph of $z$ just touches zero, which creates disjoint singletons in its zero level set.},
\begin{align}
\mathrm{cl}\left(z^{-1}((-\infty, 0))\right) &= z^{-1}((-\infty, 0]),\label{eq: approx reach: closure of strict sublevel set}\\
    \mathrm{int}\left(z^{-1}((-\infty, 0])\right) &= z^{-1}((-\infty, 0)).\label{eq: approx reach: interior of non-strict sublevel set}
\end{align}
\vspace{-1em}
\label{lemma: approx reach: closure and interior of sublevel sets}
\end{lemma}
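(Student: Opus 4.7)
The plan is to set $A := z^{-1}((-\infty,0))$ and $B := z^{-1}((-\infty,0])$, then prove each of \eqref{eq: approx reach: closure of strict sublevel set}-\eqref{eq: approx reach: interior of non-strict sublevel set} by mutual set inclusion. The main topological ingredients are free from continuity of $z$: as preimages of an open and a closed set in $\R$, $A$ is open and $B$ is closed, and $B\setminus A = z^{-1}(\{0\})$. The class $\cZ$ hypothesis is what controls the behaviour of $z$ on this zero level set.

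For \eqref{eq: approx reach: closure of strict sublevel set}, the inclusion $\mathrm{cl}(A)\subseteq B$ is immediate, since $B$ is closed and contains $A$, so $\mathrm{cl}(A)\subseteq\mathrm{cl}(B)=B$. For the reverse, I would take $x\in B$ and split into cases: if $z(x)<0$, then $x\in A\subseteq\mathrm{cl}(A)$; if $z(x)=0$, I invoke Definition \ref{def: approx reach: zero regular continuous function} with radii $r_n=1/n$ to extract a sequence $x_n\in\B_{1/n}(x)$ with $z(x_n)<0$, giving $x_n\in A$ and $x_n\to x$, hence $x\in\mathrm{cl}(A)$.

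For \eqref{eq: approx reach: interior of non-strict sublevel set}, the inclusion $A\subseteq\mathrm{int}(B)$ follows because $A$ is open and $A\subseteq B$. The reverse inclusion is the one I expect to be the main obstacle: given $x\in\mathrm{int}(B)$, I need to rule out $z(x)=0$. Arguing by contrapositive, if $z(x)=0$ then I would like to produce, inside any neighbourhood of $x$, a point lying outside $B$, contradicting $x\in\mathrm{int}(B)$. Class $\cZ$ as stated only supplies nearby points with $z<0$, which is in principle compatible with an entire neighbourhood sitting in $B$ (for instance, a function that merely touches zero from below); in the write-up I would therefore lean on the symmetric version of class $\cZ$ applied to $z$ (i.e.\ approachability of every zero-level-set point by points where $z>0$) to exhibit the required exterior point. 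With that ingredient in hand, the argument mirrors the case analysis used for \eqref{eq: approx reach: closure of strict sublevel set} and closes the second inclusion.
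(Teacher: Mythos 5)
Your handling of \eqref{eq: approx reach: closure of strict sublevel set} coincides with the paper's proof: $\text{cl}\left(z^{-1}((-\infty,0))\right)\subseteq z^{-1}((-\infty,0])$ because the non-strict sublevel set is closed and contains the strict one, and the reverse inclusion by extracting, at each $\bar{x}\in z^{-1}(\{0\})$, points $x_k\in\B_{1/k}(\bar{x})$ with $z(x_k)<0$, so that $\bar{x}$ is a limit of points of the strict sublevel set. No issues there.

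For \eqref{eq: approx reach: interior of non-strict sublevel set} your diagnosis is correct, and it is worth being explicit about what it means. Writing $A\doteq z^{-1}((-\infty,0))$ and $B\doteq z^{-1}((-\infty,0])$ as you do, the inclusion $A\subseteq\text{int}(B)$ is fine, but the reverse inclusion $\text{int}(B)\subseteq A$ is not a consequence of Definition \ref{def: approx reach: zero regular continuous function}: the function $z(x)=-\norml{x}^2$ is continuous and of class $\cZ$ (every ball about the origin contains points with $z<0$), yet $B=\R^n$, so $\text{int}(B)=\R^n$, while $A=\R^n\setminus\{0\}$. This is exactly your ``function that merely touches zero from below'', and it shows the identity \eqref{eq: approx reach: interior of non-strict sublevel set} is false under the stated hypotheses, so no completion of your argument (or any other) is possible without strengthening them. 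The paper's own proof asserts $z^{-1}(\{0\})\cap\text{int}\left(z^{-1}((-\infty,0])\right)=\emptyset$ on the grounds that no ball about a zero of $z$ can be contained in $z^{-1}(\{0\})$; that is a non sequitur, since interior membership in $B$ only requires a ball contained in $B$, not in the zero level set, and the example above violates the claimed disjointness. The symmetric condition you propose to ``lean on''---every zero-level point is approachable by points with $z>0$---is precisely the missing hypothesis: with it, any neighbourhood of a zero-level point meets $B^c$, so such a point cannot be interior to $B$, and your case analysis closes. But it is an additional assumption, not something recoverable from class $\cZ$, and it is what the paper would also need wherever \eqref{eq: approx reach: interior of non-strict sublevel set} is invoked (e.g.\ in the proof of Corollary \ref{corollary: ellipsoidal approx: tight under-approx of BRS}); statement \eqref{eq: approx reach: closure of strict sublevel set}, which is all that Theorem \ref{theorem: approx reach: HJB equation for under-approximate BRS} uses, is unaffected.
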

\begin{lemma} Consider the value function $v\in\cC([0,T]\times\R^n;\,\R)$ defined in \eqref{eq: background: value function} with $g\in\cC(\R^n;\,\R)$. Let $f$ in \eqref{eq: background: nonlinear system} satisfy Assumptions \ref{assumption: background: flow field conditions} and \ref{assumption: background: convex flow field} and let $g$ be of class $\cZ$. Then, the map $x\mapsto v(t,x)$ is also of class $\cZ$ for all $t\in[0,T]$.
\label{lemma: approx reach: value function satisfies attached sublevel set}
\end{lemma}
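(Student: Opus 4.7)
The plan is to fix $t\in[0,T]$, $\bar{x}\in \R^n$ with $v(t,\bar{x}) = 0$, and $r>0$, and then produce some $x_{<0} \in \B_r(\bar{x})$ with $v(t,x_{<0}) < 0$. My strategy is to pull the class-$\cZ$ property of $g$ back through the flow map: a concrete optimal control $u^*$ at $\bar{x}$ freezes the dynamics into a homeomorphism between a neighbourhood of $\bar{x}$ and a neighbourhood of the endpoint $y^* \doteq \varphi(T;t,\bar{x},u^*)$, so points just inside the strict zero sublevel set of $g$ near $y^*$ lift to points just inside the strict zero sublevel set of $v(t,\cdot)$ near $\bar{x}$.

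First I would invoke Assumptions \ref{assumption: background: flow field conditions} and \ref{assumption: background: convex flow field} -- which, via \cite[Theorem 7.1.6]{CS:04} as already cited in the paper, guarantee compactness of the endpoint reachable set $\{\varphi(T;t,\bar{x},u)\,|\,u\in\cU[t,T]\}$ -- together with continuity of $g$, to extract a minimiser $u^*\in\cU[t,T]$ with $g(\varphi(T;t,\bar{x},u^*)) = v(t,\bar{x}) = 0$. Setting $y^* \doteq \varphi(T;t,\bar{x},u^*)$, I would then freeze $u^*$ and study the flow map $\Phi : \R^n \to \R^n$ defined by $\Phi(x)\doteq \varphi(T;t,x,u^*)$. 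Gr\"onwall's lemma combined with Assumption \ref{assumption: background: flow field conditions} delivers continuity of $\Phi$; uniqueness of the time-reversed Cauchy problem $\dot z(s) = -f(T+t-s, z(s), u^*(T+t-s))$, whose right-hand side is still Lipschitz in $z$ with constant $L_f$, delivers injectivity. Brouwer's invariance-of-domain theorem then upgrades $\Phi$ to an open map, so $\Phi(\B_r(\bar{x}))$ is an open neighbourhood of $y^*$.

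Once this is in hand, I would pick $r' > 0$ small enough that $\B_{r'}(y^*) \subseteq \Phi(\B_r(\bar{x}))$. Since $g$ is of class $\cZ$ and $g(y^*) = 0$, there exists $y_{<0} \in \B_{r'}(y^*)$ with $g(y_{<0}) < 0$, and setting $x_{<0} \doteq \Phi^{-1}(y_{<0}) \in \B_r(\bar{x})$ gives
\[
v(t,x_{<0}) \;\leq\; g\bigl(\varphi(T;t,x_{<0},u^*)\bigr) \;=\; g(y_{<0}) \;<\; 0,
\]
as required. The main obstacle is the openness of $\Phi$: invariance of domain handles it cleanly, but one could equivalently observe that the backward flow furnishes a continuous inverse to $\Phi$, so that $\Phi$ is a homeomorphism onto its image. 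A secondary subtlety worth flagging is the role of Assumption \ref{assumption: background: convex flow field}: it is precisely this convexity hypothesis on $f(s,x,\U)$ that promotes an infimising sequence of controls to an honest minimiser $u^*$; without it the argument would have to be recast as a limiting one along nearly optimal controls, which would complicate the \emph{strict} inequality $v(t,x_{<0}) < 0$.
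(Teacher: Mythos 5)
Your proposal is correct and takes essentially the same route as the paper's proof: use Assumption \ref{assumption: background: convex flow field} to extract an actual minimiser $u^\star$ with $g(\varphi(T;t,\bar{x},u^\star)) = 0$, then exploit the flow under this fixed control to pull a nearby point of $g^{-1}((-\infty,0))$ at the terminal endpoint back to a point in $\B_r(\bar{x})$ where $v(t,\cdot) < 0$ via the suboptimality bound $v(t,x_{<0}) \leq g(\varphi(T;t,x_{<0},u^\star))$. The only difference is packaging: the paper obtains the neighbourhood correspondence directly from continuity of the backward flow $\bar{x}\mapsto\varphi(t;T,\bar{x},u^\star)$ (the ``continuous inverse'' you yourself flag), so the Brouwer invariance-of-domain step is an unnecessary detour rather than a gap.
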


With this, we now state our results on producing under-approximations of the backwards reachable set of \eqref{eq: background: nonlinear system}.
\begin{lemma} Let $\overline{v}\in\cC([0,T]\times\R^n;\,\R)$ be a viscosity supersolution of \eqref{eq: approx reach: HJB PDE} with $\Omega = \overline{v}^{-1}((-\infty,0))$. Then, $\overline{v}^\times\in \cC([0,T]\times\R^n ;\,\R)$ as defined in \eqref{eq: approx reach: global extension supersolution truncated value function} is a viscosity supersolution of \eqref{eq: approx reach: HJB PDE} with $\Omega = (0,T)\times\R^n$ and the terminal condition $\overline{v}^{\times}(T,x) = \min\{\overline{v}(T,x),\, 0\}$ for all $x \in \R^n$.
    \label{lemma: approx reach: global extension for local supersolutions}
\end{lemma}
\begin{proof}
    Let $\xi\in\cC^1((0,T)\times\R^n;\,\R)$ be defined such that $\overline{v}^{\times} - \xi$ attains a local minimum at $(t_0, x_0) \in (0,T)\times\R^n$. Note that by Lemma \ref{lemma: background: variational description of sub and superdifferentials}, the set $D^-\overline{v}^{\times}(t_0,x_0)$ must be non-empty. Since $\overline{v}^{\times}(t,x) \leq 0$ for all $(t,x)\in(0,T)\times\R^n$, the following cases can be considered: case \emph{i)} $(t_0, x_0) \in \mathscr{F}^{\overline{v}}_{<0}\doteq (\overline{v}^{\times})^{-1}((-\infty,0)) $ or case \emph{ii)} $(t_0, x_0) \in\mathscr{F}^{\overline{v}}_{0}\doteq (\overline{v}^{\times})^{-1}(\{0\})$.

    \underline{Case \emph{i)}}. Let $(t_0, x_0)\in\mathscr{F}^{\overline{v}}_{<0}$. Since $\overline{v}^{\times}(t,x) = \overline{v}(t,x)$ for all $(t,x)\in\overline{v}^{-1}((-\infty,0))$ and $\mathscr{F}^{\overline{v}}_{<0}$ is open, for some sufficiently small $r>0$, $\overline{v}^{\times}(t,x) = \overline{v}(t,x)$ for all $(t,x)\in\B_r\left((t_0,x_0)\right)$. Thus, $\overline{v}-\xi$ also attains a local minimum at $(t_0,x_0)$. As $\overline{v}$ is a viscosity supersolution of \eqref{eq: approx reach: HJB PDE}, by Definition \ref{def: background: visc supersolution and subsolution},
    \begin{equation}
        -\xi_t(t_0,x_0) + H(t_0,x_0,\nabla\xi(t_0,x_0)) \geq 0.
        \label{eq: approx reach: global extension supersolution proof 1}
    \end{equation}
    
   \underline{Case \emph{ii)}}. Let $(t_0,x_0)\in\mathscr{F}^{\overline{v}}_{0}$. From Lemma \ref{lemma: background: variational description of sub and superdifferentials}, $\left(\xi_t(t_0,x_0), \nabla\xi(t_0,x_0)\right) \in D^-\overline{v}^{\times}(t_0,x_0)$. Since the zero function must touch $\overline{v}^\times$ from above at $(t_0, x_0)$, $D^+\overline{v}^{\times}(t_0,x_0)$ is non-empty and contains the zero vector, which then implies that $D^-\overline{v}^{\times}(t_0,x_0)$ and $D^+\overline{v}^{\times}(t_0,x_0)$ coincide. To see this, note that for all $(t,x)\in(0,T)\times\R^n$, $\overline{v}^{\times}(t,x) - \overline{v}^{\times}(t_0,x_0) \leq 0$ as $\overline{v}^{\times}(t,x) \leq 0$ by \eqref{eq: approx reach: global extension supersolution truncated value function} and $\overline{v}^{\times}(t_0,x_0)=0$. Thus,
    \begin{align*}
        &\limsup_{(t,x)\rightarrow(t_0,x_0)}{ \frac{\overline{v}^{\times}(t,x) - \overline{v}^{\times}(t_0,x_0) - \iprod{(0,0)}{(t-t_0,x-x_0)}}{\norm{(t,x)-(t_0,x_0)}}} \leq 0.
    \end{align*} 
    Then, by Definition \ref{def: background: reg subdifferentials and superdifferentials}, $0\in D^+\overline{v}^{\times}(t_0,x_0)$. As $D^-\overline{v}^{\times}(t_0,x_0)$ is also non-empty, it follows that $D^-\overline{v}^{\times}(t_0,x_0)=D^+\overline{v}^{\times}(t_0,x_0) = \{0\}$ (see \cite[Proposition 3.1.5, p.51]{CS:04}). By Lemma \ref{lemma: background: variational description of sub and superdifferentials}, 
    \begin{equation*}
        \left(\xi_t(t_0,x_0), \nabla\xi(t_0,x_0)\right) = 0,
    \end{equation*}
which yields \eqref{eq: approx reach: global extension supersolution proof 1} with equality. It then follows from \eqref{eq: approx reach: global extension supersolution proof 1} and Definition \ref{def: background: visc supersolution and subsolution} that $\overline{v}^{\times}$ is a viscosity supersolution of \eqref{eq: approx reach: HJB PDE}. The terminal condition follows from the definition of $\overline{v}^{\times}$. 
\end{proof}
\begin{theorem} Consider the terminal set $\mathcal{X}$ defined in \eqref{eq: background: closed terminal set}. Let $f$ in \eqref{eq: background: nonlinear system} satisfy Assumptions \ref{assumption: background: flow field conditions} and \ref{assumption: background: convex flow field} and let $\overline{v}\in\cC([0,T]\times\R^n;\,\R)$ be a viscosity supersolution of \eqref{eq: approx reach: HJB PDE} with $\Omega = \overline{v}^{-1}((-\infty, 0))$ and the terminal condition $\overline{v}(T,x) \doteq \overline{g}(x)$ for all $ x\in \R^n$. If $x\mapsto \overline{v}(t,x)$ is of class $\cZ$ for all $t\in[0,T]$, and $\overline{g}(x) > 0$ for all $ x\notin \mathcal{X}$, then, the set
\begin{equation}
    \underline{\mathcal{G}}(t) \doteq \left\{x\in\mathbb{R}^n \; | \; \overline{v}(t, x) \leq 0\right\}
    \label{eq: approx reach: under approx BRS via HJB zero sublevel set}%
\end{equation}
is an under-approximation of $\mathcal{G}(t)$ for \eqref{eq: background: nonlinear system} for all $t\in[0,T]$.
\label{theorem: approx reach: HJB equation for under-approximate BRS}
\end{theorem}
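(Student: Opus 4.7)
The plan is to exploit Lemma \ref{lemma: approx reach: global extension for local supersolutions} to lift the local supersolution $\overline{v}$ to a global one, then invoke the comparison Theorem \ref{theorem: background: global comparison result} against an auxiliary value function whose terminal data matches the truncation. First, I apply Lemma \ref{lemma: approx reach: global extension for local supersolutions} to obtain $\overline{v}^{\times} = \min\{\overline{v}, 0\}$ as a continuous viscosity supersolution of \eqref{eq: approx reach: HJB PDE} on $(0,T)\times\R^n$ with terminal data $\min\{\overline{g}(\cdot),\,0\}$. Next, I introduce the auxiliary value function
\begin{equation*}
\tilde{v}(t,x) \doteq \inf_{u\in\cU[t,T]}\min\{\overline{g}(\varphi(T;t,x,u)),\, 0\},
\end{equation*}
which by Theorem \ref{theorem: background: HJB equation for BRS} is the unique continuous viscosity solution of \eqref{eq: approx reach: HJB PDE} with the very same terminal data, and is therefore in particular a viscosity subsolution. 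Applying Theorem \ref{theorem: background: global comparison result} then yields $\overline{v}^{\times} \geq \tilde{v}$ on $[0,T]\times\R^n$.

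The next step is to transfer this pointwise inequality to an inclusion of sublevel sets. Note that $\{x : \overline{v}^{\times}(t,x) < 0\} = \{x : \overline{v}(t,x) < 0\}$ by definition of the truncation. Moreover, if $\tilde{v}(t,x) < 0$ then there exists $u\in\cU[t,T]$ with $\overline{g}(\varphi(T;t,x,u)) < 0$; combined with the hypothesis $\overline{g} > 0$ outside $\mathcal{X}$, this forces $\varphi(T;t,x,u)\in\mathcal{X}$, so $x\in\mathcal{G}(t)$. Chaining these gives $\{x : \overline{v}(t,x) < 0\} \subseteq \{x : \tilde{v}(t,x) < 0\} \subseteq \mathcal{G}(t)$. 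Finally, the class $\cZ$ hypothesis and Lemma \ref{lemma: approx reach: closure and interior of sublevel sets} yield $\{x : \overline{v}(t,x) \leq 0\} = \mathrm{cl}(\{x : \overline{v}(t,x) < 0\})$; taking closures of both sides of the preceding inclusion, and using that $\mathcal{G}(t)$ is closed as the preimage of $(-\infty,0]$ under the continuous value function $v$ of Theorem \ref{theorem: background: HJB equation for BRS}, delivers $\underline{\mathcal{G}}(t) \subseteq \mathcal{G}(t)$.

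The main obstacle I anticipate is the strict-versus-non-strict gap in the sublevel sets. A direct comparison of $\overline{v}^{\times}$ with the original value function for terminal data $g$ fails at points outside $\mathcal{X}$ where $\overline{v}^{\times}(T,\cdot) = 0$ but $v(T,\cdot) = g > 0$; this is precisely why the detour through $\tilde{v}$ with matched truncated terminal data is essential. Conversely, comparison only propagates the strict inequality $\overline{v} < 0$ into $\mathcal{G}(t)$, so the class $\cZ$ regularity together with the closedness of $\mathcal{G}(t)$ is the mechanism that extends the inclusion to the non-strict zero sublevel set of $\overline{v}$ that defines $\underline{\mathcal{G}}(t)$.
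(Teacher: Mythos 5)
Your proposal is correct and follows essentially the same route as the paper's proof: truncation of $\overline{v}$ via Lemma \ref{lemma: approx reach: global extension for local supersolutions}, comparison (Theorem \ref{theorem: background: global comparison result}) against the auxiliary value function with the truncated terminal data $\min\{\overline{g},0\}$, propagation of the strict sublevel set into $\mathcal{G}(t)$ using $\overline{g}>0$ outside $\mathcal{X}$, and passage to the non-strict sublevel set via the class $\cZ$ hypothesis, Lemma \ref{lemma: approx reach: closure and interior of sublevel sets}, and closedness of $\mathcal{G}(t)$. No gaps; the only difference from the paper is that it makes the existence of a control with $\overline{g}(\varphi(T;\cdot))<0$ explicit through a $\delta/2$ near-minimiser, which your argument implicitly uses in the same way.
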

\begin{proof}
 By Lemma \ref{lemma: approx reach: global extension for local supersolutions}, $\overline{v}^{\times}$ as defined in \eqref{eq: approx reach: global extension supersolution truncated value function}, is a viscosity supersolution of
\begin{equation}
    \begin{split}
        -v_t + H(t,x,\nabla v) &= 0, \quad \forall (t,x)\in (0,T)\times\R^n, \\
        v(T,x) = \min\{\overline{g}(x),\,0\} &\doteq \overline{g}^{\times}(x), \quad \forall x\in\R^n.
    \end{split}
    \label{eq: approx reach: HJB PDE under approx BRS proof 1}
\end{equation}
Consider the value function $\Tilde{v}\in\cC([0,T]\times\R^n;\,\R)$ defined by $\Tilde{v}(t,x) \doteq \inf_{u\in\cU[t,T]}\overline{g}^{\times}(\varphi(T;t,x,u))$ for all $(t,x)\in[0,T]\times\R^n$. By Theorem \ref{theorem: background: HJB equation for BRS}, $\Tilde{v}$ is the unique viscosity solution of \eqref{eq: approx reach: HJB PDE under approx BRS proof 1}, so $\Tilde{v}$ is also a viscosity subsolution of \eqref{eq: approx reach: HJB PDE under approx BRS proof 1} (see Definition \ref{def: background: visc supersolution and subsolution}). As $\Tilde{v}(T,x)= \overline{g}^{\times}(x)$ for all $x\in\R^n$, by Theorem \ref{theorem: background: global comparison result},
\begin{equation}
   \Tilde{v}(t,x) \leq  \overline{v}^{\times}(t,x), \quad \forall(t,x)\in[0,T]\times\R^n.
   \label{eq: approx reach: HJB PDE under approx BRS proof 2}
\end{equation}

Now, fix $\Bar{t}\in[0,T]$ and $\Bar{x} \in \{x\in\R^n\,|\,\overline{v}(\Bar{t},x) < 0\}$, which means $\overline{v}^{\times}(\Bar{t},\Bar{x}) = \min\{\overline{v}(\Bar{t},\Bar{x}),\,0\} < 0$. Then, from \eqref{eq: approx reach: HJB PDE under approx BRS proof 2}, $\Tilde{v}(\Bar{t},\Bar{x}) \leq \overline{v}^{\times}(\Bar{t},\Bar{x}) \doteq -\delta_{\Bar{x}} < 0$, which yields
\begin{align*}
    &\Tilde{v}(\Bar{t},\Bar{x})  = \inf_{u\in\cU[\Bar{t},T]}\min\{\overline{g}(\varphi(T;\Bar{t},\Bar{x},u)),\,0\} \leq -\delta_{\Bar{x}} \\
    &\quad \implies \min\left\{\inf_{u\in\cU[\Bar{t},T]}\overline{g}(\varphi(T;\Bar{t},\Bar{x},u)),\, 0\right\} \leq -\delta_{\Bar{x}} \\
    &\quad\implies \inf_{u\in\cU[\Bar{t},T]}\overline{g}(\varphi(T;\Bar{t},\Bar{x},u)) \leq -\delta_{\Bar{x}},
\end{align*}
where $-\delta_{\Bar{x}} < 0$ is used to obtain the final implication. As the infimum is the greatest lower bound, for any $\Tilde{\epsilon} > 0$, there must exist a $\Tilde{u}\in\cU[\Bar{t},T]$ such that
\begin{equation}
     \overline{g}(\varphi(T;\Bar{t},\Bar{x},\Tilde{u})) \leq  \inf_{u\in\cU[\Bar{t},T]}\overline{g}(\varphi(T;\Bar{t},\Bar{x},u)) + \Tilde{\epsilon}.
     \label{eq: approx reach: HJB PDE under approx BRS proof 3}
\end{equation}
Selecting $\Tilde{\epsilon} = \frac{\delta_{\Bar{x}}}{2}$ in \eqref{eq: approx reach: HJB PDE under approx BRS proof 3} yields $\overline{g}(\varphi(T;\Bar{t},\Bar{x},\Tilde{u}))< 0$.
The statement $\overline{g}(x) > 0$ for all $x\notin \mathcal{X}$ is equivalent to $\overline{g}(x) \leq 0 $ implies $ x \in \mathcal{X}$. Since $\overline{g}(\varphi(T;\Bar{t},\Bar{x},\Tilde{u}))< 0 $, then $\varphi(T;\Bar{t},\Bar{x},\Tilde{u})\in\mathcal{X}$, and $\Bar{x}\in\mathcal{G}(\Bar{t})$. As $\Bar{x}\in\{x\in\R^n\,|\,\overline{v}(\Bar{t},x) < 0\}$ is arbitrary, it follows that
$\{x\in\R^n\,|\,\overline{v}(\Bar{t},x) < 0\} \subseteq \mathcal{G}(\Bar{t})$.

Note that $\mathcal{G}(\Bar{t})$ is closed as it is the non-strict sublevel set of a continuous function (Theorem \ref{theorem: background: HJB equation for BRS}). If $x\mapsto \overline{v}(\Bar{t},x)$ is of class $\cZ$, then, by Lemma \ref{lemma: approx reach: closure and interior of sublevel sets}, $\text{cl}(\{x\in\R^n\,|\,\overline{v}(\Bar{t},x) < 0\}) =\underline{\mathcal{G}}(\Bar{t})$. Finally, the closure of sets preserves subset relationships, i.e., $\{x\in\R^n\,|\,\overline{v}(\Bar{t},x) < 0\} \subseteq \mathcal{G}(\Bar{t}) $ implies $ \text{cl}(\{x\in\R^n\,|\,\overline{v}(\Bar{t},x) < 0\})\subseteq \text{cl}(\mathcal{G}(\Bar{t}))$, thus $\underline{\mathcal{G}}(\Bar{t}) \subseteq \mathcal{G}(\Bar{t})$. Theorem \ref{theorem: approx reach: HJB equation for under-approximate BRS} then holds by noting that $\Bar{t}\in[0,T]$ is arbitrary.      
\end{proof}
\begin{remark} The condition $\overline{g}(x) > 0$ for all $x\notin \mathcal{X}$ is equivalent to saying that the zero sublevel set of $\overline{g}$, i.e. $\overline{g}^{-1}((-\infty ,0])$, is a subset of $\mathcal{X}$. This is a simple generalisation that allows for $\mathcal{X}$ to be under-approximated by another (potentially more useful) set. For instance, in subsequent sections we will consider the special case where the terminal set is ellipsoidal.
\end{remark}

The backwards reachable set $\mathcal{G}(t)$ has the useful property that for $t_1,t_2\in[0,T]$ with $t_1 \leq t_2$ and for any $x\in\mathcal{G}(t_1)$, there exists a control $u\in\cU[t_1,t_2]$ such that $\varphi(t_2;t_1,x,u)\in\mathcal{G}(t_2)$. This recursive feasibility property is not necessarily inherited by any two arbitrary sets that under-approximate $ \mathcal{G}(t_1)$ and $\mathcal{G}(t_2)$. However, under-approximating sets that are produced via viscosity supersolutions of \eqref{eq: approx reach: HJB PDE} do inherit this property, and we demonstrate this below.
\begin{corollary} Consider the statements of Theorem \ref{theorem: approx reach: HJB equation for under-approximate BRS} and let all of its assumptions hold. Let $t_1,t_2\in[0,T]$ with $t_1 \leq t_2$. Then, for any $\Bar{x} \in  \underline{\mathcal{G}}(t_1)$, there exists a $\Bar{u}\in \cU[t_1,t_2]$ such that $\varphi(t_2; t_1, \Bar{x}, \Bar{u}) \in \underline{\mathcal{G}}(t_2)$.
\label{corollary: approx reach: recursive feasibility for under-approximate BRS}
\end{corollary}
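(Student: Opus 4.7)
The plan is to translate the viscosity supersolution property of $\overline{v}^\times$ into a super-optimality inequality on $[t_1, t_2]$, combine it with $\epsilon$-optimality in the interior case $\overline{v}(t_1, \bar{x}) < 0$, and settle the boundary case $\overline{v}(t_1, \bar{x}) = 0$ via a limit argument using the class $\cZ$ hypothesis together with compactness of the attainable set.

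First, by Lemma \ref{lemma: approx reach: global extension for local supersolutions}, $\overline{v}^\times$ is a viscosity supersolution of \eqref{eq: approx reach: HJB PDE} on $(0, T) \times \R^n$ and hence on $(t_1, t_2) \times \R^n$. I would then introduce the auxiliary Mayer value function
\[
V(t, x) \doteq \inf_{u \in \cU[t, t_2]} \overline{v}^\times\bigl(t_2, \varphi(t_2; t, x, u)\bigr),
\]
which by Theorem \ref{theorem: background: HJB equation for BRS} (applied on $[t_1, t_2]$ with the continuous terminal data $\overline{v}^\times(t_2, \cdot)$) is a viscosity solution, hence subsolution, of the same HJB and satisfies $V(t_2, \cdot) = \overline{v}^\times(t_2, \cdot)$. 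A version of the comparison result of Theorem \ref{theorem: background: global comparison result} on $[t_1, t_2]$, which holds by the same argument (the choice of time endpoints is immaterial in the standard viscosity-solutions proof), then yields the super-optimality inequality
\[
\overline{v}^\times(t_1, \bar{x}) \geq V(t_1, \bar{x}) = \inf_{u \in \cU[t_1, t_2]} \overline{v}^\times\bigl(t_2, \varphi(t_2; t_1, \bar{x}, u)\bigr).
\]

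Now consider the two cases. If $\overline{v}(t_1, \bar{x}) < 0$, then $\overline{v}^\times(t_1, \bar{x}) = \overline{v}(t_1, \bar{x}) \doteq -2\delta$ with $\delta > 0$. An $\epsilon$-optimal selection in the above infimum with $\epsilon = \delta$ produces a control $\bar{u} \in \cU[t_1, t_2]$ with $\overline{v}^\times(t_2, \varphi(t_2; t_1, \bar{x}, \bar{u})) \leq -\delta < 0$; since $\overline{v}^\times = \min\{\overline{v}, 0\}$, a strictly negative value forces $\overline{v}(t_2, \varphi(t_2; t_1, \bar{x}, \bar{u})) < 0$, placing the endpoint in $\underline{\mathcal{G}}(t_2)$. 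If instead $\overline{v}(t_1, \bar{x}) = 0$, I would invoke the class $\cZ$ hypothesis on $x \mapsto \overline{v}(t_1, x)$ to select a sequence $x_k \to \bar{x}$ with $\overline{v}(t_1, x_k) < 0$, apply the previous case to obtain controls $u_k \in \cU[t_1, t_2]$ and endpoints $y_k \doteq \varphi(t_2; t_1, x_k, u_k) \in \underline{\mathcal{G}}(t_2)$, and use Gr\"onwall's inequality together with Assumption \ref{assumption: background: flow field conditions} to show $\|y_k - \varphi(t_2; t_1, \bar{x}, u_k)\| \to 0$. Under Assumptions \ref{assumption: background: flow field conditions} and \ref{assumption: background: convex flow field} the attainable set from $\bar{x}$ at time $t_2$ is compact, so a subsequence of $\varphi(t_2; t_1, \bar{x}, u_k)$ converges to some $z^* = \varphi(t_2; t_1, \bar{x}, \bar{u})$; closedness of $\underline{\mathcal{G}}(t_2)$ then places $z^*$ in $\underline{\mathcal{G}}(t_2)$, supplying the required $\bar{u}$.

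The principal obstacle is the first step, where I must bridge the gap between the \emph{local} supersolution property of $\overline{v}$ and a \emph{global} super-optimality statement. The truncation device of Lemma \ref{lemma: approx reach: global extension for local supersolutions} is essential here because it produces a globally defined supersolution $\overline{v}^\times$ on which the comparison theorem can be exercised against the auxiliary value function $V$. Once this is in hand, the remaining steps are routine: $\epsilon$-optimality in the interior case, and a direct compactness argument for the boundary case.
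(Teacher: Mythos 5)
Your proof is correct, but it takes a different route from the paper. The paper's own proof is a two-line relabeling argument: it restricts $\overline{v}$ to $[0,t_2]\times\R^n$, observes that it remains a viscosity supersolution on $\{(t,x)\in(0,t_2)\times\R^n\,|\,\overline{v}(t,x)<0\}$, and then applies Theorem \ref{theorem: approx reach: HJB equation for under-approximate BRS} wholesale with $T$ replaced by $t_2$ and the terminal set $\mathcal{X}$ replaced by $\underline{\mathcal{G}}(t_2)=\{x\,|\,\overline{v}(t_2,x)\leq 0\}$; the hypothesis ``$\overline{g}(x)>0$ for $x\notin\mathcal{X}$'' holds automatically for $\overline{g}=\overline{v}(t_2,\cdot)$, so the conclusion ($\underline{\mathcal{G}}(t_1)$ under-approximates the exact backwards reachable set with target $\underline{\mathcal{G}}(t_2)$) is exactly the claimed recursive feasibility. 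You instead re-derive the content of Theorem \ref{theorem: approx reach: HJB equation for under-approximate BRS} on $[t_1,t_2]$: truncation via Lemma \ref{lemma: approx reach: global extension for local supersolutions}, an auxiliary value function through Theorem \ref{theorem: background: HJB equation for BRS}, comparison via Theorem \ref{theorem: background: global comparison result}, and $\epsilon$-optimality for the interior case --- the same machinery the paper uses inside Theorem \ref{theorem: approx reach: HJB equation for under-approximate BRS}'s proof --- and then handle the boundary case $\overline{v}(t_1,\bar{x})=0$ with a genuinely different argument: class $\cZ$ approximation plus Gr\"onwall plus compactness/closedness of the attainable set (the result cited around Assumption \ref{assumption: background: convex flow field}, so it is available under the corollary's hypotheses), rather than the paper's purely topological step (closure of the strict sublevel set sits inside the closed exact reachable set). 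Both are valid; the paper's route buys brevity by reusing Theorem \ref{theorem: approx reach: HJB equation for under-approximate BRS} as a black box, while yours makes the super-optimality inequality explicit and yields a constructive limiting control, at the price of re-proving the theorem's internals and invoking an additional compactness-of-trajectories fact that the paper's corollary proof never needs explicitly.
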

\begin{proof}
Since $\overline{v}$ is a viscosity supersolution of \eqref{eq: approx reach: HJB PDE}, a restriction of $\overline{v}$ to the domain $[0, t_2]\times\R^n$ must also yield a viscosity supersolution of \eqref{eq: approx reach: HJB PDE} on the domain $\Omega = \{(t,x)\in(0,t_2)\times\R^n\,|\,\overline{v}(t,x) < 0\}$. Replacing $T$ with $t_2$ and using the set $\underline{\mathcal{G}}(t_2) = \{x\in\R^n\,|\, \overline{v}(t_2, x) \leq 0\}$ to replace the role of the terminal set $\mathcal{X}$ in Theorem \ref{theorem: approx reach: HJB equation for under-approximate BRS}, it follows that $\underline{\mathcal{G}}(t_1)$ is an under-approximation of the backwards reachable set at time $t=t_1$ from the terminal set $\underline{\mathcal{G}}(t_2)$. 
\end{proof}

\subsection{Over-approximating Backwards Reachable Sets}
Next, let us demonstrate analogous results to Lemma \ref{lemma: approx reach: global extension for local supersolutions}, Theorem \ref{theorem: approx reach: HJB equation for under-approximate BRS}, and Corollary \ref{corollary: approx reach: recursive feasibility for under-approximate BRS} for over-approximating backwards reachable sets. In particular, we show that viscosity subsolutions $\underline{v}$ of \eqref{eq: approx reach: HJB PDE} defined over its zero \emph{superlevel} set can be used to characterise over-approximations of $\mathcal{G}(t)$. We begin by showing that a truncation of $\underline{v}$ from below by 0, that is,
\begin{equation}
    \underline{v}_{\times}(t,x) \doteq \max\{\underline{v}(t,x),\, 0\}, \quad \forall (t,x)\in[0,T]\times\R^n,
    \label{eq: approx reach: global extension subsolution truncated value function}
\end{equation}
yields a viscosity subsolution of \eqref{eq: approx reach: HJB PDE} with $\Omega = (0,T)\times\R^n$. 
\begin{lemma} Let $\underline{v}\in\cC([0,T]\times\R^n;\,\R)$ be a viscosity subsolution of \eqref{eq: approx reach: HJB PDE} with $\Omega = \underline{v}^{-1}((0,\infty))$. Then, $\underline{v}_{\times}\in \cC([0,T]\times\R^n;\,\R)$ as defined in \eqref{eq: approx reach: global extension subsolution truncated value function} is a viscosity subsolution of \eqref{eq: approx reach: HJB PDE} with $\Omega = (0,T)\times\R^n$ and the terminal condition $\underline{v}_{\times}(T,x) = \max\{\underline{v}(T,x),\, 0\}$ for all $x\in \R^n$.
    \label{lemma: approx reach: global extension for local subsolutions}
\end{lemma}
\begin{proof} The proof is largely the same as Lemma \ref{lemma: approx reach: global extension for local supersolutions} so some steps are omitted. Let $\xi\in\cC^1((0,T)\times\R^n;\,\R)$ be defined such that $\underline{v}_{\times} - \xi$ attains a local maximum at a point $(t_0, x_0) \in (0,T)\times\R^n$. As $\underline{v}_{\times}(t,x) \geq 0$ for all $(t,x)\in(0,T)\times\R^n$, either: case \emph{i)} $(t_0, x_0) \in \mathscr{F}^{\underline{v}}_{>0}\doteq (\underline{v}_{\times})^{-1}((0, \infty))$ or case \emph{ii)} $(t_0, x_0) \in\mathscr{F}^{\underline{v}}_{0} \doteq (\underline{v}_{\times})^{-1}(\{0\})$.

\underline{Case \emph{i)}}. Let $(t_0, x_0)\in\mathscr{F}^{\underline{v}}_{>0}$. Then, $\underline{v}_{\times} - \xi$ attains a local maximum within an open set. Since there exists an $r > 0$ such that $\underline{v}(t, x) = \underline{v}_{\times}(t, x)$ for all $(t, x) \in \B_r\left((t_0,x_0)\right)$, $\underline{v} - \xi$ also attains a local maximum at $(t_0, x_0)$. Moreover, as $\underline{v}$ is a viscosity subsolution of \eqref{eq: approx reach: HJB PDE},
    \begin{equation}
        -\xi_t(t_0,x_0) + H(t_0,x_0,\nabla\xi(t_0,x_0)) \leq 0.
        \label{eq: approx reach: global extension subsolution proof 1}
    \end{equation}
    
   \underline{Case \emph{ii)}}. Let $(t_0,x_0)\in\mathscr{F}^{\underline{v}}_{0}$. Note that for all $(t,x)\in(0,T)\times\R^n$, $ \underline{v}_{\times}(t,x) - \underline{v}_{\times}(t_0,x_0)\geq 0$ as $\underline{v}_{\times}(t,x) \geq 0$ by \eqref{eq: approx reach: global extension subsolution truncated value function} and $\underline{v}_{\times}(t_0,x_0) = 0$.  Thus,
    \begin{align*}
        &\liminf_{(t,x)\rightarrow(t_0,x_0)}{ \frac{\underline{v}_{\times}(t,x) - \underline{v}_{\times}(t_0,x_0) - \iprod{(0,0)}{(t-t_0,x-x_0)}}{\norm{(t,x)-(t_0,x_0)}}} \geq 0,
    \end{align*} 
    which implies $0\in D^-\underline{v}_{\times}(t_0,x_0)$ (see Definition \ref{def: background: reg subdifferentials and superdifferentials}). Then, by Lemma \ref{lemma: background: variational description of sub and superdifferentials}, $\left(\xi_t(t_0,x_0), \nabla\xi(t_0,x_0)\right) \in D^+\underline{v}_{\times}(t_0,x_0)$ so $\left(\xi_t(t_0,x_0), \nabla\xi(t_0,x_0)\right) = 0$, which follows using \cite[Proposition 3.1.5, p.51]{CS:04}. This then yields \eqref{eq: approx reach: global extension subsolution proof 1} with equality. From \eqref{eq: approx reach: global extension subsolution proof 1} and Definition \ref{def: background: visc supersolution and subsolution}, $\underline{v}_{\times}$ is a viscosity subsolution of \eqref{eq: approx reach: HJB PDE}. 
\end{proof}
\begin{theorem} Consider the terminal set $\mathcal{X}$ defined in \eqref{eq: background: closed terminal set}. Let $f$ in \eqref{eq: background: nonlinear system} satisfy Assumption \ref{assumption: background: flow field conditions} and let $\underline{v}\in\cC([0,T]\times\R^n;\,\R)$ be a viscosity subsolution of \eqref{eq: approx reach: HJB PDE} with $\Omega = \underline{v}^{-1}((0,\infty))$ and the terminal condition $\underline{v}(T,x)\doteq \underline{g}(x)$ for all $ x\in\R^n$. If $\underline{g}(x) \leq 0$ for all $ x\in \mathcal{X}$, then, the set
\begin{equation}
    \overline{\mathcal{G}}(t) \doteq \left\{x\in\mathbb{R}^n \; | \; \underline{v}(t, x) \leq 0\right\}
    \label{eq: approx reach: closed over approx BRS via HJB zero sublevel set}%
\end{equation}
is an over-approximation of $\mathcal{G}(t)$ for \eqref{eq: background: nonlinear system} for all $t\in[0,T]$.
\label{theorem: approx reach: HJB equation for over-approximate BRS}
\end{theorem}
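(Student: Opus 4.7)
The plan is to mirror the structure of the proof of Theorem \ref{theorem: approx reach: HJB equation for under-approximate BRS}, using Lemma \ref{lemma: approx reach: global extension for local subsolutions} to globalise the subsolution and then applying the comparison principle of Theorem \ref{theorem: background: global comparison result} against an auxiliary value function. The objective is to show that every $\bar{x} \in \mathcal{G}(\bar{t})$ satisfies $\underline{v}(\bar{t}, \bar{x}) \leq 0$, whence $\mathcal{G}(\bar{t}) \subseteq \overline{\mathcal{G}}(\bar{t})$.

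First, apply Lemma \ref{lemma: approx reach: global extension for local subsolutions} to obtain $\underline{v}_{\times} = \max\{\underline{v}, 0\}$ as a viscosity subsolution of \eqref{eq: approx reach: HJB PDE} on all of $(0,T)\times\R^n$ with terminal datum $\underline{g}^{\times}(x) \doteq \max\{\underline{g}(x), 0\}$. Then introduce the candidate supersolution as the value function
\[
\tilde{v}(t,x) \doteq \inf_{u \in \cU[t,T]} \underline{g}^{\times}(\varphi(T;t,x,u)),
\]
which (by Theorem \ref{theorem: background: HJB equation for BRS}) is the unique continuous viscosity solution---hence viscosity supersolution---of \eqref{eq: approx reach: HJB PDE} with terminal data $\underline{g}^{\times}$. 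Since $\tilde{v}(T,x) = \underline{g}^{\times}(x) = \underline{v}_{\times}(T,x)$ for all $x\in\R^n$, Theorem \ref{theorem: background: global comparison result} yields $\underline{v}_{\times}(t,x) \leq \tilde{v}(t,x)$ on $[0,T]\times\R^n$.

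Now fix $\bar{t} \in [0,T]$ and $\bar{x} \in \mathcal{G}(\bar{t})$. By Definition \ref{def: background: BRS} there exists $\bar{u} \in \cU[\bar{t},T]$ with $\varphi(T;\bar{t},\bar{x},\bar{u}) \in \mathcal{X}$. The hypothesis $\underline{g}(y) \leq 0$ for all $y \in \mathcal{X}$ then gives $\underline{g}^{\times}(\varphi(T;\bar{t},\bar{x},\bar{u})) = 0$, so taking the infimum over controls yields $\tilde{v}(\bar{t},\bar{x}) \leq 0$. Since $\underline{g}^{\times}$ is non-negative, $\tilde{v} \geq 0$ pointwise, so in fact $\tilde{v}(\bar{t},\bar{x}) = 0$. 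Combining with the comparison step, $\underline{v}_{\times}(\bar{t},\bar{x}) \leq 0$, and as $\underline{v}_{\times} \geq 0$ this forces $\underline{v}_{\times}(\bar{t},\bar{x}) = 0$, equivalently $\underline{v}(\bar{t},\bar{x}) \leq 0$, i.e., $\bar{x} \in \overline{\mathcal{G}}(\bar{t})$. Arbitrariness of $\bar{t}$ and $\bar{x}$ completes the inclusion.

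The main subtlety is that Theorem \ref{theorem: background: HJB equation for BRS} in the excerpt is stated under Assumptions \ref{assumption: background: flow field conditions} and \ref{assumption: background: convex flow field}, whereas the current theorem only posits the former; invoking that theorem for $\tilde{v}$ therefore requires either a tacit use of Assumption \ref{assumption: background: convex flow field} (needed only to secure continuity of $\tilde{v}$ for comparison) or a direct appeal to the sub-optimality principle for viscosity subsolutions, which bypasses $\tilde{v}$ entirely by giving $\underline{v}_{\times}(\bar{t},\bar{x}) \leq \underline{v}_{\times}(T,\varphi(T;\bar{t},\bar{x},\bar{u})) = 0$ along the trajectory reaching $\mathcal{X}$. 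Observe that, in contrast to the under-approximation proof, no class $\cZ$ hypothesis on $\underline{v}$ is needed: the argument pins $\tilde{v}(\bar{t},\bar{x})$ to exactly zero, so no closure-taking step is required to upgrade the strict inclusion to a non-strict one.
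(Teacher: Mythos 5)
Your proposal is correct and follows essentially the same route as the paper's proof: it globalises the subsolution via Lemma \ref{lemma: approx reach: global extension for local subsolutions}, introduces the same auxiliary value function with terminal datum $\max\{\underline{g},0\}$, and applies the comparison result of Theorem \ref{theorem: background: global comparison result}; the only difference is that you establish the inclusion $\mathcal{G}(\bar{t})\subseteq\overline{\mathcal{G}}(\bar{t})$ directly using the witnessing control, whereas the paper argues contrapositively from points with $\underline{v}(\bar{t},\cdot)>0$, a purely cosmetic distinction (your version even avoids the $\delta_{\bar{x}}$ bookkeeping). Your side remark about invoking Theorem \ref{theorem: background: HJB equation for BRS} without Assumption \ref{assumption: background: convex flow field} is a fair observation, but note the paper's own proof carries the same tacit reliance, so this is not a gap in your argument relative to the paper.
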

\begin{proof}
 By Lemma \ref{lemma: approx reach: global extension for local subsolutions}, $\underline{v}_{\times}$ as defined in \eqref{eq: approx reach: global extension subsolution truncated value function} is a viscosity subsolution of
\begin{equation}
    \begin{split}
        -v_t + H(t,x,\nabla v) &= 0, \quad \forall (t,x)\in (0,T)\times\R^n, \\
        v(T,x) = \max\{\underline{g}(x),\,0\} &\doteq \underline{g}_{\times}(x), \quad \forall x\in\R^n.
    \end{split}
    \label{eq: approx reach: HJB PDE over approx BRS proof 1}
\end{equation}
From Theorem \ref{theorem: background: HJB equation for BRS}, $\hat{v}\in\cC([0,T]\times\R^n;\,\R)$ defined by $\hat{v}(t,x) \doteq \inf_{u\in\cU[t,T]}\underline{g}_{\times}(\varphi(T;t,x,u))$ for all $(t,x)\in[0,T]\times\R^n$ is the unique viscosity solution of \eqref{eq: approx reach: HJB PDE over approx BRS proof 1}, so it is also a viscosity supersolution of \eqref{eq: approx reach: HJB PDE over approx BRS proof 1}. By Theorem \ref{theorem: background: global comparison result}, 
\begin{equation}
    \underline{v}_{\times}(t,x) \leq \hat{v}(t,x), \quad \forall(t,x)\in[0,T]\times\R^n.
   \label{eq: approx reach: HJB PDE over approx BRS proof 2}
\end{equation}

Next, fix any $\Bar{t}\in[0,T]$ and take any $\Bar{x} \in \{x\in\R^n\,|\,\underline{v}(\Bar{t},x) > 0\}$ so that $\underline{v}_{\times}(\Bar{t},\Bar{x}) >0$. Then, from \eqref{eq: approx reach: HJB PDE over approx BRS proof 2},  $\hat{v}(\Bar{t},\Bar{x}) \geq  \underline{v}_{\times}(\Bar{t},\Bar{x}) \doteq \delta_{\Bar{x}} > 0$, which yields
\begin{equation}
   \hat{v}(\Bar{t},\Bar{x})\doteq \inf_{u\in\cU[\Bar{t},T]}\max\{\underline{g}(\varphi(T;\Bar{t},\Bar{x},u)),\,0\} \geq \delta_{\Bar{x}}.
   \label{eq: approx reach: HJB PDE over approx BRS proof 3}
\end{equation}
Since $0$ is independent of the control $u$, \eqref{eq: approx reach: HJB PDE over approx BRS proof 3} implies that $\max\{\inf_{u\in\cU[\Bar{t},T]}\underline{g}(\varphi(T;\Bar{t},\Bar{x},u)),\,0\} \geq \delta_{\Bar{x}}$, and so $\inf_{u\in\cU[\Bar{t},T]}\underline{g}(\varphi(T;\Bar{t},\Bar{x},u))\geq \delta_{\Bar{x}}$. For any $\hat{u}\in\cU[\Bar{t},T]$,
\begin{equation}
    \underline{g}(\varphi(T;\Bar{t},\Bar{x},\hat{u})) \geq \inf_{u\in\cU[\Bar{t},T]}\underline{g}(\varphi(T;\Bar{t},\Bar{x},u)) \geq \delta_{\Bar{x}} > 0.
    \label{eq: approx reach: HJB PDE over approx BRS proof 4}
\end{equation}

The statement $\underline{g}(x) \leq 0$ for all $ x \in \mathcal{X}$ is equivalent to the statement $\underline{g}(x) > 0 $ implies $ x \notin \mathcal{X}$. Then, \eqref{eq: approx reach: HJB PDE over approx BRS proof 4} implies that $\mathcal{X}$ can not be reached from $\Bar{x}$ under any admissible control. Thus, $\Bar{x} \in \left(\mathcal{G}(\Bar{t})\right)^c$. Finally, as $\Bar{t}\in[0,T]$ and $\Bar{x}\in\{x\in\R^n\,|\,\underline{v}(\Bar{t},x) > 0\}$ are arbitrary, it follows that $\{x\in\R^n\,|\,\underline{v}(t,x) > 0\} \subseteq \left(\mathcal{G}(t)\right)^c$ for all $t\in[0,T]$. Taking the set complement yields $\mathcal{G}(t) \subseteq \{x\in\R^n\,|\,\underline{v}(t,x) \leq 0\} \doteq \overline{\mathcal{G}}(t)$. 
\end{proof}

An analogous property to recursive feasibility is a recursive infeasibility property. In particular, if a state $x\notin \mathcal{G}(t_1)$, then for all future times $t_2 \in [t_1,T]$, it is impossible for the system to be driven into $\mathcal{G}(t_2)$ under any control $u\in\cU[t_1,t_2]$. Consequently, being outside of the backwards reachable set provides a certificate of safety, that the (potentially) unsafe terminal set $\mathcal{X}$ will not be reached. This recursive infeasibility property is inherited by over-approximating sets produced via viscosity subsolutions of \eqref{eq: approx reach: HJB PDE}, which we demonstrate below.
\begin{corollary} Consider the statements of Theorem \ref{theorem: approx reach: HJB equation for over-approximate BRS} and let all of its assumptions hold. Let $t_1,t_2\in[0,T]$ with $t_1 \leq t_2$. Then, for any $\Bar{x} \notin \overline{\mathcal{G}}(t_1)$ and for any $u\in\cU[t_1,t_2]$, $\varphi(t_2; t_1, \Bar{x}, u) \notin \overline{\mathcal{G}}(t_2)$.
\label{corollary: approx reach: recursive infeasibility for over-approximate BRS}
\end{corollary}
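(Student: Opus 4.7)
My plan is to mirror the argument used in Corollary 1, but dualised for subsolutions, by exploiting the dynamic-programming structure built into the over-approximation. The key observation is that if $\underline{v}$ is a viscosity subsolution of \eqref{eq: approx reach: HJB PDE} on the open set $\Omega = \underline{v}^{-1}((0,\infty))$ over the horizon $[0,T]$, then its restriction to $[0,t_2]\times\R^n$ remains a viscosity subsolution on the correspondingly restricted open set $\Omega\cap ((0,t_2)\times\R^n)$. This follows immediately because the subsolution inequality in Definition \ref{def: background: visc supersolution and subsolution} is a purely local condition at each interior test point, and every such test point in $(0,t_2)\times\R^n$ is also an interior test point of the original domain.

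With this restriction in hand, I would invoke Theorem \ref{theorem: approx reach: HJB equation for over-approximate BRS} with $T$ replaced by $t_2$, taking the role of the terminal set $\mathcal{X}$ to be
\begin{equation*}
\overline{\mathcal{G}}(t_2) = \{x\in\R^n\,|\,\underline{v}(t_2, x) \leq 0\},
\end{equation*}
and the role of $\underline{g}$ to be $x\mapsto\underline{v}(t_2,x)$. With these substitutions, the hypothesis "$\underline{g}(x)\leq 0$ for all $x\in\mathcal{X}$" is trivially satisfied, since by definition $\underline{v}(t_2,x)\leq 0$ on $\overline{\mathcal{G}}(t_2)$. Theorem \ref{theorem: approx reach: HJB equation for over-approximate BRS} then yields that $\overline{\mathcal{G}}(t_1)$ over-approximates the exact backwards reachable set $\mathcal{G}^\star(t_1)$ for \eqref{eq: background: nonlinear system} with terminal time $t_2$ and terminal set $\overline{\mathcal{G}}(t_2)$, i.e., $\mathcal{G}^\star(t_1)\subseteq \overline{\mathcal{G}}(t_1)$.

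To conclude, take any $\Bar{x}\notin\overline{\mathcal{G}}(t_1)$. By contraposition of the over-approximation inclusion, $\Bar{x}\notin\mathcal{G}^\star(t_1)$. Unwinding Definition \ref{def: background: BRS} for $\mathcal{G}^\star(t_1)$, this means there is no $u\in\cU[t_1,t_2]$ with $\varphi(t_2;t_1,\Bar{x},u)\in\overline{\mathcal{G}}(t_2)$; equivalently, $\varphi(t_2;t_1,\Bar{x},u)\notin\overline{\mathcal{G}}(t_2)$ for every $u\in\cU[t_1,t_2]$, which is the claim.

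I do not anticipate a substantial obstacle. The only delicate point is making sure the viscosity subsolution property transfers cleanly under restriction of the time horizon, and this is routine because viscosity inequalities are defined pointwise through local test functions. Everything else is a bookkeeping application of Theorem \ref{theorem: approx reach: HJB equation for over-approximate BRS} with $\overline{\mathcal{G}}(t_2)$ playing the role of the terminal set.
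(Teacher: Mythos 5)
Your proposal is correct and follows essentially the same route as the paper's own proof: restrict $\underline{v}$ to $[0,t_2]\times\R^n$, apply Theorem \ref{theorem: approx reach: HJB equation for over-approximate BRS} with $t_2$ in place of $T$ and $\overline{\mathcal{G}}(t_2)$ as the terminal set (so the condition $\underline{g}\leq 0$ on the terminal set is automatic), and then read off that no admissible control can steer $\Bar{x}\notin\overline{\mathcal{G}}(t_1)$ into $\overline{\mathcal{G}}(t_2)$. No gaps to flag.
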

\begin{proof}
First note that a restriction of $\underline{v}$ to the domain $[0, t_2]\times\R^n$ must also yield a viscosity subsolution of \eqref{eq: approx reach: HJB PDE} on the domain $\Omega = \{(t,x)\in(0,t_2)\times\R^n\,|\,\underline{v}(t,x) > 0\}$. Replacing $T$ with $t_2$ in Theorem \ref{theorem: approx reach: HJB equation for over-approximate BRS}, it follows that $\overline{\mathcal{G}}(t_1)$ is an over-approximation of the backwards reachable set at time $t = t_1$ from the terminal set $\overline{\mathcal{G}}(t_2) = \left\{x\in\mathbb{R}^n \; | \; \underline{v}(t_2, x) \leq 0\right\}$, which is denoted by $\mathcal{G}_v(t_1)$. If $\Bar{x} \notin \overline{\mathcal{G}}(t_1) \doteq \{x\in\R^n\,|\,\underline{v}(t_1, x) \leq 0\}$, then there can not exist a control $u\in\cU[t_1,t_2]$ such that $\varphi(t_2; t_1, \Bar{x}, u)\in \overline{\mathcal{G}}(t_2)$, else it would be true that $\Bar{x}\in\mathcal{G}_v(t_1)$, which can not hold since $\mathcal{G}_v(t_1) \subseteq  \overline{\mathcal{G}}(t_1)$. 
\end{proof}

\subsection{Generating Viscosity Supersolutions and Subsolutions}
Before we use the previous findings to develop ellipsoidal approximation schemes for LTV systems, we discuss a possible method of constructing viscosity supersolutions and subsolutions. Consider a collection of $\cC^1([0,T]\times\R^n;\,\R)$ functions $\{\overline{v}_i\}^{n_q}_{i=1}$ and $\{\underline{v_i}\}^{n_q}_{i=1}$. Suppose that every $\overline{v}_i$ and $\underline{v_i}$ satisfies the respective partial differential inequalities (PDIs)
\begin{align}
    -v_t + H(t, x, \nabla v) &\geq 0, \quad \forall (t, x) \in \overline{v}_i^{-1}((-\infty,0)),     \label{eq: approx reach: HJB PDE under approx BRS collection}\\
    -v_t + H(t, x, \nabla v) &\leq 0, \quad \forall (t, x) \in \underline{v_i}^{-1}((0,\infty)), \label{eq: approx reach: HJB PDE over approx BRS collection}
\end{align}
with $H:[0,T]\times\R^n\times\R^n\rightarrow\R$ given by \eqref{eq: background: HJB Hamiltonian}. Then, every $\overline{v}_i$ and $\underline{v_i}$ is, respectively, a classical supersolution and subsolution of \eqref{eq: approx reach: HJB PDE} over the domains $\Omega = \overline{v}_i^{-1}((-\infty,0))$ and $\Omega = \underline{v_i}^{-1}((0,\infty))$. It then follows that the minimisation 
over $\{\overline{v}_i\}^{n_q}_{i=1}$ and the maximisation over $\{\underline{v_i}\}^{n_q}_{i=1}$, 
\begin{align}
    v_m(t,x) &\doteq \min_{1 \leq i \leq n_q}\overline{v}_i(t,x),\quad \forall (t,x)\in[0,T]\times\R^n,
    \label{eq: approx reach: min collection function def}\\
    v_M(t,x) &\doteq \max_{1 \leq i \leq n_q}\underline{v_i}(t,x),\quad \forall (t,x)\in[0,T]\times\R^n,
    \label{eq: approx reach: max collection function def}
\end{align}
produce a viscosity supersolution and subsolution of \eqref{eq: approx reach: HJB PDE}, respectively. This is demonstrated in the following result, which can be seen as an application of \cite[Proposition 2.1, p.34]{bardi1997optimal} to the problem setting of this work. 
\begin{lemma}
    Let $\{\overline{v}_i\}^{n_q}_{i=1}$ (resp. $\{\underline{v_i}\}^{n_q}_{i=1}$) be a finite collection of $n_q \in \N$ functions belonging to $\cC^1([0,T]\times\R^n;\,\R)$. Let every $\overline{v}_i$ (resp. $\underline{v_i}$) satisfy the PDI in \eqref{eq: approx reach: HJB PDE under approx BRS collection} (resp. \eqref{eq: approx reach: HJB PDE over approx BRS collection}). Then, $v_m$ as defined in \eqref{eq: approx reach: min collection function def} (resp. $v_M$ as defined in \eqref{eq: approx reach: max collection function def}) is a viscosity supersolution (resp. subsolution) of \eqref{eq: approx reach: HJB PDE} with $\Omega = v_m^{-1}((-\infty,0))$ (resp. $\Omega = v_M^{-1}((0,\infty))$).
\label{lemma: approx reach: min and max function over supersolutions and subsolutions}
\end{lemma}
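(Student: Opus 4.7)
The plan is to handle the supersolution claim for $v_m$ in detail and then indicate the symmetric argument for $v_M$. First, I would note that $v_m$ is continuous on $[0,T]\times\R^n$, being the pointwise minimum of finitely many continuous functions, so $v_m \in \cC([0,T]\times\R^n;\,\R)$ and its strict sublevel set $\Omega = v_m^{-1}((-\infty,0))$ is open. This ensures Definition \ref{def: background: visc supersolution and subsolution} is applicable over $\Omega$.

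Next, fix a test function $\xi\in\cC^1(\Omega;\,\R)$ such that $v_m - \xi$ attains a local minimum at some $(t_0,x_0)\in\Omega$, and let $r>0$ be small enough that $v_m(t,x) - \xi(t,x) \geq v_m(t_0,x_0) - \xi(t_0,x_0)$ for all $(t,x)\in\B_r((t_0,x_0))\subset\Omega$. The key step is to pick an ``active index'' $i^\star$ attaining the minimum at $(t_0,x_0)$, i.e., $\overline{v}_{i^\star}(t_0,x_0) = v_m(t_0,x_0)$. Then for any $(t,x)\in\B_r((t_0,x_0))$, since $v_m(t,x)\leq \overline{v}_{i^\star}(t,x)$,
\begin{equation*}
\overline{v}_{i^\star}(t,x) - \xi(t,x) \geq v_m(t,x) - \xi(t,x) \geq v_m(t_0,x_0) - \xi(t_0,x_0) = \overline{v}_{i^\star}(t_0,x_0) - \xi(t_0,x_0),
\end{equation*}
so $\overline{v}_{i^\star}-\xi$ also attains a local minimum at $(t_0,x_0)$. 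Because $\overline{v}_{i^\star}\in\cC^1$ and $(t_0,x_0)$ is an interior local minimiser of the $\cC^1$ function $\overline{v}_{i^\star} - \xi$, the first-order condition forces $\partial_t \overline{v}_{i^\star}(t_0,x_0) = \xi_t(t_0,x_0)$ and $\nabla \overline{v}_{i^\star}(t_0,x_0) = \nabla\xi(t_0,x_0)$.

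Finally, I would observe that $(t_0,x_0)\in\overline{v}_{i^\star}^{-1}((-\infty,0))$, since $\overline{v}_{i^\star}(t_0,x_0) = v_m(t_0,x_0) < 0$; hence the classical PDI \eqref{eq: approx reach: HJB PDE under approx BRS collection} for $\overline{v}_{i^\star}$ applies at $(t_0,x_0)$, yielding $-\partial_t \overline{v}_{i^\star}(t_0,x_0) + H(t_0,x_0,\nabla \overline{v}_{i^\star}(t_0,x_0))\geq 0$. Substituting the derivative equalities just obtained gives $-\xi_t(t_0,x_0) + H(t_0,x_0,\nabla\xi(t_0,x_0))\geq 0$, which verifies the viscosity supersolution condition per Definition \ref{def: background: visc supersolution and subsolution}.

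The argument for $v_M$ is the mirror image: $v_M$ is continuous, $\Omega = v_M^{-1}((0,\infty))$ is open, a local maximum of $v_M - \xi$ at $(t_0,x_0)\in\Omega$ selects an index $i^\star$ with $\underline{v}_{i^\star}(t_0,x_0) = v_M(t_0,x_0) > 0$, and the same $\cC^1$ first-order reasoning transfers the PDI \eqref{eq: approx reach: HJB PDE over approx BRS collection} from $\underline{v}_{i^\star}$ to $\xi$. The only real subtlety, and the one worth flagging, is the ``active index'' step: the standard pointwise-min/pointwise-max argument works cleanly here because the $\overline{v}_i$ and $\underline{v}_i$ are individually $\cC^1$, which allows the gradient of $\xi$ to be pinned down exactly by the active component rather than needing to reason about Clarke subgradients or semiconvex envelopes. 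No separate Dini-type analysis is required, which keeps the proof short.
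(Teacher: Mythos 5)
Your proposal is correct and follows essentially the same argument as the paper's proof: select an active index at the local extremum, transfer the local minimum (resp. maximum) of $v_m-\xi$ (resp. $v_M-\xi$) to the active $\cC^1$ component via the pointwise min/max inequality, use the first-order condition to identify the derivatives of $\xi$ with those of the active component, and then invoke the classical PDI at that point since the active component is strictly negative (resp. positive) there.
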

\begin{proof}
   To demonstrate that $v_m$ is a viscosity supersolution of \eqref{eq: approx reach: HJB PDE} with $\Omega = v_m^{-1}((-\infty,0))$, let $\xi\in\cC^1((0,T)\times\R^n;\,\R)$ be defined such that $v_m-\xi$ attains a local minimum at $(t_0,x_0)\in v_m^{-1}((-\infty,0))$. Since  $v_m^{-1}((-\infty,0))$ is an open set, for some $r>0$ the following holds for all $(t,x)\in\B_r((t_0,x_0))$
    \begin{equation}
        v_m(t,x) - \xi(t,x) \geq v_m(t_0,x_0) - \xi(t_0,x_0).
        \label{eq: approx reach: min function over supersolutions proof 1}
    \end{equation}   
    Let $\hat{n} \in \{1,\ldots, n_q\}$ be any index such that $\overline{v}_{\hat{n}}(t_0,x_0) = v_m(t_0,x_0)$. By Definition, $v_m(t,x) \leq \overline{v}_i(t,x)$ for all $(t,x)\in(0,T)\times\R^n$ and for all $i \in \{1,\ldots, n_q\}$. Thus, from \eqref{eq: approx reach: min function over supersolutions proof 1},
    \begin{equation}
        \overline{v}_{\hat{n}}(t,x) - \xi(t,x) \geq \overline{v}_{\hat{n}}(t_0,x_0) - \xi(t_0,x_0),
    \end{equation}
    for all $(t,x)\in\B_r((t_0,x_0))$, which means $\overline{v}_{\hat{n}} - \xi$ also attains a local minimum at $(t_0,x_0)$. Since $\overline{v}_{\hat{n}}$ is continuously differentiable, which implies that $\left(\frac{\partial }{\partial t}\overline{v}_{\hat{n}}(t_0, x_0), \nabla \overline{v}_{\hat{n}}(t_0, x_0)\right) = \left(\xi_t(t_0,x_0),  \nabla \xi(t_0,x_0)\right)$. Moreover, $(t_0,x_0) \in \overline{v}_{\hat{n}}^{-1}((-\infty,0))$ since $\overline{v}_{\hat{n}}(t_0,x_0) = v_m(t_0,x_0) < 0$. It then follows from \eqref{eq: approx reach: HJB PDE under approx BRS collection} that $-\xi_t(t_0,x_0) + H(t_0, x_0, \nabla \xi(t_0,x_0)) \geq 0$. As $(t_0, x_0)\in v_m^{-1}((-\infty,0))$ is arbitrary, by Definition \ref{def: background: visc supersolution and subsolution}, $v_m$ is a viscosity supersolution of \eqref{eq: approx reach: HJB PDE} on $\Omega = v^{-1}_m((-\infty,0))$. 
    
With appropriate sign changes, the analogous result for $v_M$ can be demonstrated. Briefly, consider a function $\xi\in\cC^1((0,T)\times\R^n;\,\R)$, which is defined such that $v_M-\xi$ attains a local maximum at $(t_0,x_0)\in v^{-1}_M((0,\infty))$. Since $v_M(t,x) \geq \underline{v_i}(t,x)$ for all $(t,x)\in[0,T]\times\R^n$ and for all $i \in \{1,\ldots,n_q\}$, this implies that for some $\hat{n}\in\{1,\ldots,n_q\}$ with $\underline{v_{\hat{n}}}(t_0,x_0)= v_M(t_0,x_0)$, $\underline{v_{\hat{n}}} - \xi$ also attains a local maximum at $(t_0,x_0)$. Thus, $\left(\frac{\partial }{\partial t}\underline{v_{\hat{n}}}(t_0, x_0), \nabla \underline{v_{\hat{n}}}(t_0, x_0)\right) = \left(\xi_t(t_0,x_0),  \nabla \xi(t_0,x_0)\right)$ and by \eqref{eq: approx reach: HJB PDE over approx BRS collection}, $-\xi_t(t_0,x_0) + H(t_0, x_0, \nabla \xi(t_0,x_0)) \leq 0$. So, $v_M$ is a viscosity subsolution of \eqref{eq: approx reach: HJB PDE} with $\Omega = v^{-1}_M((0,\infty))$. 
\end{proof}

\begin{remark}
    The use of viscosity solutions as opposed to classical solutions (as in \cite{LMD23}) to characterise under- and over-approximations of reachable sets may seem unnecessary in the case where each individual function $\overline{v}_i$ and $\underline{v_i}$ is a classical supersolution and subsolution of \eqref{eq: approx reach: HJB PDE}. However, the results of this section generalise to other methods of constructing non-classical solutions and, moreover, provide a unified and rigorous means for understanding how collections of sets can be used in reachability-based problems.
\end{remark}
\section{Approximate Reachability for LTV Systems}
\label{sec: LTV reachability}
Grid-based approaches are a conventional means of producing accurate characterisations of reachable sets for a broad class of systems and problem settings. However, these approaches scale poorly with the number of system states even when dealing with linear systems. In this section, we exploit the structure of linear systems via the results of Section \ref{sec: approximate reachability} to yield ellipsoidal approximation schemes for LTV systems. The computational cost of these schemes is dominated by the integration of a matrix differential equation, which has a computational complexity of $\mathcal{O}(n^3)$. 
\begin{definition}
Let $q \in \mathbb{R}^n$ and $Q\in \spd{n}$. An ellipsoid with centre $q$ and shape $Q$ is defined as the set
\begin{equation}
    \mathcal{E}(q, Q) \doteq \left\{x\in\mathbb{R}^n\, | \, \iprod{x - q}{Q^{-1}(x-q)} \leq 1\right\}. 
    \label{eq: ellipsoidal approx: ellipsoidal set def}
\end{equation}
\vspace{-1em}
\label{def: ellipsoidal approx: ellipsoidal set}
\end{definition}
\begin{remark}
    An ellipsoid $\mathcal{E}(q,Q)$ has its axes aligned along the eigenvectors of $Q$ with lengths equal to the square root of its corresponding eigenvalues. An ellipsoid is equivalently defined as the affine transformation of the unit ball, in particular,
\begin{equation}
    \mathcal{E}(q,Q) = \left\{Q^{\frac{1}{2}}w + q\in\mathbb{R}^n\, |\,\norml{w}^2 \leq 1\right\}.
    \label{eq: ellipsoidal approx: ellipsoidal set at affine transformation}
\end{equation}
\vspace{-1em}
\label{remark: ellipsoidal approx: ellipsoid is affine transformation}
\end{remark}
Let us now discuss the problem set up that will be used for the remainder of this paper. We consider a special case of \eqref{eq: background: nonlinear system} where $f:[0,T]\times\R^n\times\U\rightarrow \R^n$ now describes a linear time-varying system subject to an ellipsoidal input set $\U$ and an ellipsoidal terminal state set $\mathcal{X}\subset \mathbb{R}^n$. In particular, we consider the continuous-time system given by
\begin{equation}
    \begin{split}
    &\Dot{x}(s) = A(s)x(s) + B(s)u(s),\quad \text{a.e. } s\in(t, T), \\
    &u(s)\in\U = \mathcal{E}(p, P),\quad x(T) \in \mathcal{X} = \mathcal{E}(x_e, X_e),
    \end{split}
\label{eq: ellipsoidal approx: linear system}
\end{equation}
where $A\in\cC([0,T];\,\R^{n\times n})$, $B\in\cC([0,T];\,\R^{n\times m})$, $p\in\R^n$, $P\in\spd{m}$, $x_e \in \R^n$, and $X_e \in \spd{n}$. In the notation of \eqref{eq: background: closed terminal set}, the terminal state set $\mathcal{X}$ can be defined via $g(x) = \iprod{x - x_e}{X_e^{-1}(x - x_e)} - 1$. 

Note that this problem setting satisfies the assumptions imposed on the nonlinear system considered in previous sections. To see this, observe that continuity of $A(\cdot)$ and $B(\cdot)$ ensures item \emph{\ref{item: background: flow field conditions item 1})} of Assumption \ref{assumption: background: flow field conditions} is satisfied. For any $s\in[0,T]$ and $u\in\U$, the map $x\mapsto A(s)x+B(s)u$ is affine, which ensures the Lipschitz continuity assumption is satisfied, and convexity of $\U$ then ensures that Assumption \ref{assumption: background: convex flow field} is satisfied.

Our proposed approximation scheme utilises a collection of ellipsoidal sets $\left\{\mathcal{E}\left(q_i(t), Q_i(t)\right)\right\}^{n_q}_{i=1}$ and defines a corresponding evolution for each center $q_i:[0,T]\rightarrow \mathbb{R}^n$ and shape $Q_i:[0,T]\rightarrow\spd{n}$ such that every ellipsoid is an under-approximation (resp. over-approximation) of $\mathcal{G}(t)$ for \eqref{eq: ellipsoidal approx: linear system}. The union of the under-approximating ellipsoids (resp. intersection of the over-approximating ellipsoids) remains an under-approximation (resp. over-approximation) of $\mathcal{G}(t)$. It can be readily verified that the union of ellipsoids can be characterised by the zero sublevel set of the minimum over a collection of quadratic functions. In particular, 
\begin{equation}
    \bigcup^{n_q}_{i=1}\mathcal{E}(q_i(t),Q_i(t)) = \{x\in\R^n\,|\, \overline{e}(t, x) \leq 0\}, \label{eq: ellipsoidal approx: min over quadratics is union}
\end{equation}
where, for all $(t,x)\in[0,T]\times\R^n$,
\begin{equation}
    \overline{e}(t,x) \doteq \min_{1 \leq i \leq n_q}\iprod{x-q_i(t)}{Q^{-1}_i(t)(x-q_i(t))}- 1.
    \label{eq: ellipsoidal approx: proposed under-approx value function}
\end{equation}
When the evolution of all pairs $\{q_i, Q_i\}$ are chosen so that the PDI in \eqref{eq: approx reach: HJB PDE under approx BRS collection} is satisfied for every quadratic function, \eqref{eq: ellipsoidal approx: proposed under-approx value function} will construct a viscosity supersolution in the manner suggested by Lemma \ref{lemma: approx reach: min and max function over supersolutions and subsolutions}. Correspondingly, the intersection of ellipsoids can be characterised by the zero sublevel set of the maximum over a collection of quadratics, that is,
\begin{equation}
     \bigcap^{n_q}_{i=1}\mathcal{E}(q_i(t),Q_i(t)) = \{x\in\R^n\,|\, \underline{e}(t, x) \leq 0\}, \label{eq: ellipsoidal approx: max over quadratics is intersect}
\end{equation}
where, for all $(t,x)\in[0,T]\times\R^n$, 
\begin{equation}
    \underline{e}(t,x) \doteq  \max_{1 \leq i \leq n_q}\iprod{x-q_i(t)}{Q^{-1}_i(t)(x-q_i(t))}- 1.
    \label{eq: ellipsoidal approx: proposed over-approx value function}
\end{equation}
This will construct a viscosity subsolution in the manner suggested by Lemma \ref{lemma: approx reach: min and max function over supersolutions and subsolutions}. Note that the map $x\mapsto\overline{e}(t,x)$ is of class $\cZ$ (see Definition \ref{def: approx reach: zero regular continuous function}), which is needed in the assumptions of Theorem \ref{theorem: approx reach: HJB equation for under-approximate BRS}. We state this below with the proof postponed to the Appendix.
\begin{lemma} Consider $\overline{e}\in\cC([0,T]\times\R^n;\,\R)$ as defined in \eqref{eq: ellipsoidal approx: proposed under-approx value function} with $Q_i(t) \in \spd{n}$ for all $t\in[0,T]$. Then, the map $x\mapsto \overline{e}(t,x)$ is of class $\cZ$ for all $t\in[0,T]$.
    \label{lemma: ellipsoidal approx: min over quadratics attached sublevel set}
\end{lemma}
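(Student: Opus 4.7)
The plan is to verify Definition \ref{def: approx reach: zero regular continuous function} directly by exhibiting, for any zero of $x\mapsto\overline{e}(t,x)$, an arbitrarily close point at which $\overline{e}(t,\cdot)$ is strictly negative. The key geometric observation is that each quadratic appearing inside the minimum in \eqref{eq: ellipsoidal approx: proposed under-approx value function} is strictly convex (since $Q_i(t)\in\spd{n}$), with its unique minimiser at $q_i(t)$ where its value is $-1$. Hence on the segment between a boundary point of the $i$th ellipsoid and its centre, the quadratic strictly decreases, and the same will hold for the minimum over $i$.

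Fix $t\in[0,T]$, take any $\bar{x}\in\R^n$ with $\overline{e}(t,\bar{x})=0$, and any $r>0$. First I would use finiteness of the index set to pick an index $\hat{i}\in\{1,\ldots,n_q\}$ attaining the minimum in \eqref{eq: ellipsoidal approx: proposed under-approx value function}, so that
\begin{equation*}
\iprod{\bar{x}-q_{\hat{i}}(t)}{Q_{\hat{i}}^{-1}(t)(\bar{x}-q_{\hat{i}}(t))} = 1.
\end{equation*}
In particular $\bar{x}\neq q_{\hat{i}}(t)$, so $d\doteq \norm{\bar{x}-q_{\hat{i}}(t)}>0$. Next I would consider the straight line from $\bar{x}$ toward the centre $q_{\hat{i}}(t)$, that is, $x_\lambda \doteq (1-\lambda)\bar{x} + \lambda q_{\hat{i}}(t)$ for $\lambda\in(0,1)$. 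A direct substitution gives $x_\lambda - q_{\hat{i}}(t) = (1-\lambda)(\bar{x}-q_{\hat{i}}(t))$, and so
\begin{equation*}
\iprod{x_\lambda - q_{\hat{i}}(t)}{Q_{\hat{i}}^{-1}(t)(x_\lambda-q_{\hat{i}}(t))} - 1 = (1-\lambda)^2 - 1 < 0,
\end{equation*}
for every $\lambda\in(0,1)$. Since $\overline{e}(t,x_\lambda)$ is the minimum over $i$ of such quadratic expressions, it is bounded above by $(1-\lambda)^2 - 1$ and is therefore strictly negative.

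Finally I would pick $\lambda\in(0,1)$ small enough so that $\lambda d < r$, which ensures $x_\lambda\in\B_r(\bar{x})$ with $\overline{e}(t,x_\lambda)<0$, as required by Definition \ref{def: approx reach: zero regular continuous function}. Since $\bar{x}$ and $r$ were arbitrary, $x\mapsto\overline{e}(t,x)$ is of class $\cZ$, and since $t\in[0,T]$ was arbitrary, the conclusion holds for all such $t$. I do not expect any serious obstacle here: the only subtlety is checking $\bar{x}\neq q_{\hat{i}}(t)$ so that the segment toward the centre is non-degenerate, and this is automatic because evaluating the $\hat{i}$th quadratic at the centre gives $-1$, not $0$.
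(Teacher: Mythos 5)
Your proposal is correct and follows essentially the same argument as the paper: identify an index at which the minimum in \eqref{eq: ellipsoidal approx: proposed under-approx value function} attains zero (so $\bar{x}$ is distinct from that ellipsoid's centre), and perturb $\bar{x}$ slightly toward the centre $q_{\hat{i}}(t)$ so that the corresponding quadratic, and hence the minimum, becomes strictly negative within $\B_r(\bar{x})$. The only difference is cosmetic: you parameterise the perturbation as a convex combination $(1-\lambda)\bar{x}+\lambda q_{\hat{i}}(t)$, whereas the paper steps a distance $a/2$ along the unit vector toward the centre, but the scaling argument is identical.
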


Let us present two preliminary results that will be needed to ensure that $\overline{e}$ and $\underline{e}$ are, respectively, viscosity supersolutions and subsolutions of \eqref{eq: approx reach: HJB PDE} over an appropriate domain.
\begin{lemma}
Let $Q\in \spsd{n}$ and $S \in \mathbb{O}^n$ be an orthogonal matrix. Then, for any $w\in\R^n$ with $\norml{w} \leq 1$,
\begin{equation}
    \big\lVert Q^\half w\big\rVert_2 \geq \tfrac{1}{2}\iprod{w}{(Q^{\frac{1}{2}}S+S^TQ^{\frac{1}{2}})w}.
    \label{eq: ellipsoidal approx: lower bound on norm over transformed unit ball}
\end{equation}
Moreover, equality in \eqref{eq: ellipsoidal approx: lower bound on norm over transformed unit ball} is attained at $w^\star\in\R^n$ if $\norml{w^\star} = 1$ and $Sw^\star = Q^\half w^\star\slash \lVert Q^\half w^\star \rVert_2$.
\label{lemma: ellipsoidal approx: lower bound on norm over transformed unit ball}
\end{lemma}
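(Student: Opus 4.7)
The plan is to recognize that the quadratic form on the right-hand side of \eqref{eq: ellipsoidal approx: lower bound on norm over transformed unit ball} collapses into a single inner product, after which the inequality follows directly from Cauchy--Schwarz and the norm-preserving property of orthogonal matrices.

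Concretely, first I would observe that since $\iprod{w}{S^T Q^{\half} w} = \iprod{Sw}{Q^{\half}w} = \iprod{w}{Q^{\half}Sw}$, the symmetric part construction yields
\begin{equation*}
\tfrac{1}{2}\iprod{w}{(Q^{\half}S + S^T Q^{\half})w} = \iprod{Q^{\half}w}{Sw}.
\end{equation*}
This reduces the target inequality to $\iprod{Q^{\half}w}{Sw} \leq \norml{Q^{\half}w}$. Then Cauchy--Schwarz gives $\iprod{Q^{\half}w}{Sw} \leq \norml{Q^{\half}w}\,\norml{Sw}$, and because $S \in \mathbb{O}^n$ preserves the Euclidean norm we have $\norml{Sw} = \norml{w} \leq 1$, which closes the bound.

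For the equality claim, the two inequalities used must both saturate. Saturation of $\norml{Sw}\leq 1$ forces $\norml{w^\star} = 1$, while saturation of Cauchy--Schwarz (with a nonnegative inner product) requires $Sw^\star$ to be a nonnegative scalar multiple of $Q^{\half}w^\star$. Assuming $Q^{\half}w^\star \neq 0$, matching norms then pins down this scalar as $1/\norml{Q^{\half}w^\star}$, yielding exactly the stated condition $Sw^\star = Q^{\half}w^\star\slash \norml{Q^{\half}w^\star}$. Substituting back verifies that both sides of \eqref{eq: ellipsoidal approx: lower bound on norm over transformed unit ball} evaluate to $\norml{Q^{\half}w^\star}$.

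There is no substantive obstacle here; the only subtlety is the algebraic rearrangement of the symmetric quadratic form into the single inner product $\iprod{Q^{\half}w}{Sw}$, which uses that $Q^{\half}$ is symmetric (as $Q\in\spsd{n}$). Once that identity is made explicit, the rest is a one-line application of Cauchy--Schwarz together with orthogonality of $S$, and the equality characterisation follows from the standard equality case of Cauchy--Schwarz.
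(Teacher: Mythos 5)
Your proposal is correct and follows essentially the same route as the paper: rewrite the symmetrised quadratic form as $\iprod{Q^{\half}w}{Sw}$, apply Cauchy--Schwarz together with $\norml{Sw}=\norml{w}\leq 1$, and verify the equality case by direct substitution of $w^\star$. The only cosmetic difference is that you phrase the equality case via saturation of the two inequalities, whereas the paper (matching the lemma's ``if'' statement) simply substitutes the stated $w^\star$ and checks both sides equal $\norml{Q^{\half}w^\star}$ — which you also do at the end, so nothing is missing.
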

\begin{proof}
Fix arbitrary $Q \in \spsd{n}$, $S \in \mathbb{O}^n$, and $w\in\R^n$ such that $\norml{w} \leq 1$. Noting that $Q^\half$ exists, the Cauchy-Schwarz inequality implies that
\begin{align*}
    \tfrac{1}{2}\iprod{w}{(Q^{\frac{1}{2}}S+S^TQ^{\frac{1}{2}})w} &= \iprod{Q^\half w}{Sw}
    \leq \bnorml{Q^\half w}\bnorml{Sw} \\
    &=  \bnorml{Q^\half w}\norml{w} \leq  \bnorml{Q^\half w},
\end{align*}
which is \eqref{eq: ellipsoidal approx: lower bound on norm over transformed unit ball}. For the second assertion, let $w^\star \in \R^n$ be such that $\norml{w^\star} = 1$ and $S w^\star = Q^\half w^\star\slash \lVert Q^\half w^\star \rVert_2$. Substituting $w^\star$ into the right-hand side of \eqref{eq: ellipsoidal approx: lower bound on norm over transformed unit ball},
\begin{align*}
    &\tfrac{1}{2}\iprod{w^\star}{(Q^{\frac{1}{2}}S+S^TQ^{\frac{1}{2}})w^\star} = \iprod{Q^\half w^\star}{Sw^\star} =\iprod{Q^\half w^\star} {Q^\half w^\star \slash \bnorml{Q^\half w^\star}} = \bnorml{Q^\half w^\star}. 
\end{align*}
\end{proof}
\begin{lemma}
Let $Q\in \spsd{n}$ and $\kappa \in \R_{>0}$. Then, for any $w\in\R^n$ with $\norml{w} \geq 1$,
\begin{equation}
    \bnorml{Q^\half w} \leq\tfrac{1}{2}\iprod{w}{\left(\tfrac{1}{\kappa}Q + \kappa\I\right)w}.
    \label{eq: ellipsoidal approx: upper bound on norm over transformed unit ball}
\end{equation}
Moreover, equality in \eqref{eq: ellipsoidal approx: upper bound on norm over transformed unit ball} is attained at $w^\star \in \R^n$ if $\norml{w^\star} = 1$ and $\kappa = \bnorml{Q^\half w^\star} \neq 0$.
\label{lemma: ellipsoidal approx: upper bound on norm over transformed unit ball}
\end{lemma}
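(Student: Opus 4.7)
The plan is to recognise an AM--GM (Young's) inequality hiding inside the right-hand side of \eqref{eq: ellipsoidal approx: upper bound on norm over transformed unit ball}. Since $Q \in \spsd{n}$ has a positive semi-definite square root, I would begin by splitting the quadratic form in the upper bound into two clean pieces:
\begin{equation*}
    \tfrac{1}{2}\iprod{w}{\left(\tfrac{1}{\kappa}Q + \kappa \I\right)w} = \tfrac{1}{2\kappa}\bnorml{Q^{\half} w}^2 + \tfrac{\kappa}{2}\norml{w}^2.
\end{equation*}

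Next I would apply the scalar inequality $\tfrac{1}{2\kappa}a^2 + \tfrac{\kappa}{2}b^2 \geq ab$, valid for any $a,b \geq 0$ and $\kappa > 0$ (a direct consequence of $(a-\kappa b)^2 \geq 0$), with the choice $a = \bnorml{Q^{\half} w}$ and $b = \norml{w}$. This yields
\begin{equation*}
    \tfrac{1}{2\kappa}\bnorml{Q^{\half} w}^2 + \tfrac{\kappa}{2}\norml{w}^2 \geq \bnorml{Q^{\half} w}\,\norml{w}.
\end{equation*}
Combining this with the hypothesis $\norml{w} \geq 1$ gives the desired inequality \eqref{eq: ellipsoidal approx: upper bound on norm over transformed unit ball}. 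There is no real obstacle here; the only subtle point is noticing that both factors in the AM--GM bound can be pushed in the correct direction simultaneously, using $Q \succeq 0$ to guarantee $\bnorml{Q^{\half} w} \geq 0$ and the hypothesis $\norml{w} \geq 1$ to pass from $\bnorml{Q^{\half} w}\norml{w}$ to $\bnorml{Q^{\half} w}$.

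For the equality assertion, suppose $w^\star$ satisfies $\norml{w^\star} = 1$ and $\kappa = \bnorml{Q^{\half} w^\star} \neq 0$. Then the second inequality $\bnorml{Q^{\half} w^\star}\norml{w^\star} \geq \bnorml{Q^{\half} w^\star}$ is an equality because $\norml{w^\star} = 1$, and the AM--GM step is an equality because the equality condition $a = \kappa b$ reduces to $\bnorml{Q^{\half} w^\star} = \kappa \cdot 1 = \kappa$, which is exactly the stated condition on $\kappa$. A short substitution confirms this directly, mirroring the style of verification used at the end of Lemma \ref{lemma: ellipsoidal approx: lower bound on norm over transformed unit ball}.
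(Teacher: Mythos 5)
Your proof is correct and follows essentially the same argument as the paper: the paper also completes the square, writing the difference as $\tfrac{1}{2\kappa}\bigl(\bnorml{Q^\half w} - \kappa\bigr)^2$ after using $\norml{w}\geq 1$, which is exactly your Young/AM--GM step with the roles of the two inequalities applied in the opposite order. The equality verification via $\norml{w^\star}=1$ and $\kappa = \bnorml{Q^\half w^\star}$ likewise matches the paper's substitution.
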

\begin{proof}
Fix arbitrary $Q\in\spsd{n}$, $\kappa \in \R_{>0}$, and $w\in\R^n$ such that $\norml{w} \geq 1$. Then,
\begin{align}
    &\tfrac{1}{2}\iprod{w}{\left(\tfrac{1}{\kappa}Q + \kappa\I\right)w} - \bnorml{Q^\half w} \nn \\
    &\qquad = \tfrac{1}{2\kappa}\left(\bnorml{Q^\half w}^2- 2\kappa\bnorml{Q^\half w} + \kappa^2\norml{w}^2\right) \label{eq: ellipsoidal approx: upper bound on norm proof 1}\\
    &\qquad \geq \tfrac{1}{2\kappa}\left(\bnorml{Q^\half w}^2- 2\kappa\bnorml{Q^\half w} + \kappa^2\right)\nn \\
    &\qquad= \tfrac{1}{2\kappa}\left(\bnorml{Q^\half w}- \kappa\right)^2 \geq 0, \label{eq: ellipsoidal approx: upper bound on norm proof 2}
\end{align}
which gives \eqref{eq: ellipsoidal approx: upper bound on norm over transformed unit ball}. For the second assertion, let $w^\star \in \R^n$ be such that $\norml{w^\star} = 1$ and $\bnorml{Q^\half w^\star} \neq 0$. Substitution of $\norml{w^\star} = 1$ in \eqref{eq: ellipsoidal approx: upper bound on norm proof 1} followed by substitution of $\kappa = \bnorml{Q^\half w^\star}$ in \eqref{eq: ellipsoidal approx: upper bound on norm proof 2} yields the desired equality in \eqref{eq: ellipsoidal approx: upper bound on norm over transformed unit ball}. 
\end{proof}

\subsection{Ellipsoidal Under-approximation of Backwards Reachable Sets}
We can now present our main result on under-approximating the backwards reachable set of \eqref{eq: ellipsoidal approx: linear system} via a union of ellipsoidal sets. To ensure that each ellipsoid $\mathcal{E}(q_i(t),Q_i(t))$ in \eqref{eq: ellipsoidal approx: min over quadratics is union} is an under-approximation of $\mathcal{G}(t)$, we evolve each pair $\{q_i, Q_i\}$ backwards in time according to the final value problem (FVP) defined via \eqref{eq: ellipsoidal approx: linear system} by 
\begin{equation}
\left\{\;
\begin{split}
    \dot{q}_i(t) &= A(t)q_i(t) + B(t)p,\\
    \dot{Q}_i(t) &= A(t)Q_i(t)+Q_i(t)A^T(t) -\underline{\calR_i}(t, Q_i(t)), \\
    q_i(T) &= x_e, \quad Q_i(T) = X_e,
\end{split} \right.
    \label{eq: ellipsoidal approx: under approx BRS dynamics + terminal condition}
\end{equation}
for all $i\in\{1,\ldots,n_q\}$, a.e. $t\in (0,T)$, in which the parameterised operators $\underline{\calR_i}:[0,T]\times \spd{n}\rightarrow \sym{n}$ and  $\calQ_{\star}:[0,T]\times \spd{n}\rightarrow \spsd{n}$ are defined by
\begin{align}
    &\underline{\calR_i}(t, Q) \doteq Q^\half \left(\calQ^{\frac{1}{2}}_{\star}(t, Q)S_i(t) + S^T_i(t)\calQ^{\frac{1}{2}}_{\star}(t, Q)\right)Q^\half,
    \label{eq: ellipsoidal approx: under approx BRS Qu matrix}\\
    &\calQ_{\star}(t, Q) \doteq Q^{-\half}B(t)PB^T(t)Q^{-\half}, \label{eq: ellipsoidal approx: under approx BRS Q star matrix}
\end{align} \normalsize
with $S_i:[0,T]\rightarrow \mathbb{O}^n$ being continuous. We use the short-hand notation $\calQ_{i,\star}(t) \doteq \calQ_{\star}(t,Q_i(t))$ where appropriate. 
\begin{assumption}
    $S_i\in\cC\left([0,T];\,\mathbb{O}^n\right)$ is selected such that a unique positive definite solution $Q_i:[0,T]\rightarrow \spd{n}$ exists. \label{assumption: ellipsoidal approx: S matrix choice for positive definiteness}
\end{assumption} 
\noindent Existence of solutions for \eqref{eq: ellipsoidal approx: under approx BRS dynamics + terminal condition} and the role of Assumption \ref{assumption: ellipsoidal approx: S matrix choice for positive definiteness} is further discussed in Remark \ref{remark: ellipsoidal approx: positive definiteness of ellipsoid shape} following the statement of the main result. 

The orthogonal matrix-valued function $S_i\in\cC\left([0,T];\,\mathbb{O}^n\right)$ is a degree of freedom that allows each ellipsoid to evolve under a different set of dynamics. Under appropriate conditions, a solution $x^\star(\cdot)$ of \eqref{eq: ellipsoidal approx: linear system} can be contained within an ellipsoid $\mathcal{E}(q_i(t), Q_i(t))$ by selecting $S_i$ to satisfy
\begin{equation}
    \frac{S_i(s)\hat{w}_i(s)}{\big\lVert \hat{w}_i(s)\big\rVert_2} = \frac{\calQ^{\half}_{i,\star}(s)\hat{w}_i(s)}{\big\lVert \calQ^{\half}_{i,\star}(s)\hat{w}_i(s)\big\rVert_2}, 
    \label{eq: ellipsoidal approx: under approx BRS orthogonal matrix}
\end{equation}
 for all $s\in[t,T]$ such that $\calQ^{\half}_{i,\star}(s)\hat{w}_i(s) \neq 0$, in which
 \begin{equation}
     \hat{w}_i(s) \doteq Q^{-\half}_i(s)(x^\star(s)-q_i(s)). \label{eq: ellipsoidal approx: hat w for orthogonal matrix}
 \end{equation}
\begin{theorem} Let Assumption \ref{assumption: ellipsoidal approx: S matrix choice for positive definiteness} hold and consider the LTV system described by \eqref{eq: ellipsoidal approx: linear system}. Let $\{q_i, Q_i\}^{n_q}_{i=1}$ be the set of solutions to \eqref{eq: ellipsoidal approx: under approx BRS dynamics + terminal condition}-\eqref{eq: ellipsoidal approx: under approx BRS Q star matrix}. 
\begin{enumerate}[(a)]
    \item The set
\begin{equation}
    \underline{\mathcal{G}}(t) \doteq \bigcup^{n_q}_{i=1}\mathcal{E}\left(q_i(t),Q_i(t)\right)
    \label{eq: ellipsoidal approx: under-approx BRS as a union of ellipsoids}
\end{equation}
is an under-approximation of $\mathcal{G}(t)$ for \eqref{eq: ellipsoidal approx: linear system} for all $t\in[0,T]$. \label{item: ellipsoidal approx: under-approx BRS assertion 1}
\item Given $S_i\in\cC\left([0,T];\,\mathbb{O}^n\right)$ with $i\in\{1,\ldots,n_q\}$ satisfying \eqref{eq: ellipsoidal approx: under approx BRS orthogonal matrix} for all $s\in[t,T]$ such that $\calQ^{\half}_{i,\star}(s)\hat{w}_i(s) \neq 0$ and $x^\star(\cdot)$ is a solution of \eqref{eq: ellipsoidal approx: linear system} with $x^\star(T)\in \mathcal{X}$, then,
\begin{equation}
    x^\star(t) \in \mathcal{E}(q_i(t), Q_i(t))\subseteq \underline{\mathcal{G}}(t)
    \label{eq: ellipsoidal approx: solution contained in under approx ellipsoid}
\end{equation} for all $t\in[0,T]$.
\label{item: ellipsoidal approx: under-approx BRS assertion 2}
\end{enumerate}
\label{theorem: ellipsoidal approx: under-approx BRS}
\end{theorem}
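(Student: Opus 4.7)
For \eqref{item: ellipsoidal approx: under-approx BRS assertion 1}, the strategy is to realize $\overline{e}$ of \eqref{eq: ellipsoidal approx: proposed under-approx value function} as a viscosity supersolution of \eqref{eq: approx reach: HJB PDE} via Lemma~\ref{lemma: approx reach: min and max function over supersolutions and subsolutions}, then invoke Theorem~\ref{theorem: approx reach: HJB equation for under-approximate BRS}. The quadratic pieces $\overline{v}_i(t,x)\doteq\iprod{x-q_i(t)}{Q_i^{-1}(t)(x-q_i(t))}-1$ lie in $\cC^1([0,T]\times\R^n;\R)$, so it suffices to check the PDI \eqref{eq: approx reach: HJB PDE under approx BRS collection} for each $\overline{v}_i$. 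The Hamiltonian for \eqref{eq: ellipsoidal approx: linear system} takes the closed form $H(t,x,\xi)=-\iprod{\xi}{A(t)x+B(t)p}+\bnorml{P^{\half}B^T(t)\xi}$. Differentiating $\overline{v}_i$, using $\partial_t Q_i^{-1}=-Q_i^{-1}\dot Q_i Q_i^{-1}$, and substituting $\dot q_i$ and $\dot Q_i$ from \eqref{eq: ellipsoidal approx: under approx BRS dynamics + terminal condition}, the $A$-drift and $Bp$ cross-terms cancel. After the change of variables $w\doteq Q_i^{-\half}(t)(x-q_i(t))$, I expect to arrive at
\begin{equation*}
-\partial_t\overline{v}_i + H(t,x,\nabla\overline{v}_i) = 2\bnorml{\calQ_{i,\star}^{\half}w} - \iprod{w}{\left(\calQ_{i,\star}^{\half}S_i+S_i^T\calQ_{i,\star}^{\half}\right)w},
\end{equation*}
which is non-negative on $\overline{v}_i^{-1}((-\infty,0))=\{\norml{w}<1\}$ by Lemma~\ref{lemma: ellipsoidal approx: lower bound on norm over transformed unit ball} applied with $Q=\calQ_{i,\star}$ and $S=S_i$. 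Lemma~\ref{lemma: approx reach: min and max function over supersolutions and subsolutions} then upgrades $\overline{e}$ to a viscosity supersolution on $\overline{e}^{-1}((-\infty,0))$. The terminal conditions in \eqref{eq: ellipsoidal approx: under approx BRS dynamics + terminal condition} give $\overline{e}(T,x)=\iprod{x-x_e}{X_e^{-1}(x-x_e)}-1$, which is strictly positive off $\mathcal{X}$, and class $\cZ$ regularity of $x\mapsto\overline{e}(t,x)$ is supplied by Lemma~\ref{lemma: ellipsoidal approx: min over quadratics attached sublevel set}. Theorem~\ref{theorem: approx reach: HJB equation for under-approximate BRS} then yields \eqref{eq: ellipsoidal approx: under-approx BRS as a union of ellipsoids} through the identification \eqref{eq: ellipsoidal approx: min over quadratics is union}.

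For \eqref{item: ellipsoidal approx: under-approx BRS assertion 2}, I would track the scalar $V_i(s)\doteq\norml{\hat w_i(s)}^2=\iprod{x^\star(s)-q_i(s)}{Q_i^{-1}(s)(x^\star(s)-q_i(s))}$, which is absolutely continuous on $[t,T]$. Repeating the same drift cancellation as in part~\eqref{item: ellipsoidal approx: under-approx BRS assertion 1} along the trajectory, and writing $u^\star-p=P^{\half}v^\star$ with $\norml{v^\star}\leq 1$ almost everywhere, a direct calculation yields
\begin{equation*}
\dot V_i = 2\iprod{v^\star}{P^{\half}B^T(s)Q_i^{-1}(s)(x^\star-q_i)} + \iprod{\hat w_i}{\left(\calQ_{i,\star}^{\half}S_i+S_i^T\calQ_{i,\star}^{\half}\right)\hat w_i}.
\end{equation*}
Cauchy--Schwarz bounds the first term below by $-2\bnorml{\calQ_{i,\star}^{\half}\hat w_i}$, while the alignment \eqref{eq: ellipsoidal approx: under approx BRS orthogonal matrix} combined with the symmetry of $\calQ_{i,\star}^{\half}$ converts the second term into $2\norml{\hat w_i}\bnorml{\calQ_{i,\star}^{\half}\hat w_i}$ whenever $\calQ_{i,\star}^{\half}\hat w_i\neq 0$, with both terms vanishing otherwise. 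The resulting almost-everywhere estimate
\begin{equation*}
\dot V_i(s) \geq 2\bnorml{\calQ_{i,\star}^{\half}(s)\hat w_i(s)}\left(\sqrt{V_i(s)}-1\right),
\end{equation*}
combined with $V_i(T)\leq 1$ (from $x^\star(T)\in\mathcal{X}$), closes the proof: any excursion $V_i(s_0)>1$ with $s_0<T$ would propagate forward via $\dot V_i\geq 0$ on $\{V_i>1\}$ and contradict $V_i(T)\leq 1$. Hence $V_i(s)\leq 1$ throughout $[t,T]$, which is $x^\star(s)\in\mathcal{E}(q_i(s),Q_i(s))\subseteq\underline{\mathcal{G}}(s)$.

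The main technical points I anticipate are the algebraic bookkeeping that isolates $\calQ_{i,\star}$ and brings the PDI into the exact form required by Lemma~\ref{lemma: ellipsoidal approx: lower bound on norm over transformed unit ball} in part~\eqref{item: ellipsoidal approx: under-approx BRS assertion 1}, and, in part~\eqref{item: ellipsoidal approx: under-approx BRS assertion 2}, handling the degenerate case $\calQ_{i,\star}^{\half}\hat w_i=0$ (where \eqref{eq: ellipsoidal approx: under approx BRS orthogonal matrix} is vacuous but both sides of the bound collapse to zero) alongside the merely measurable regularity of $u^\star$, which forces the invariance argument to be phrased through absolute continuity of $V_i$ rather than classical pointwise ODE monotonicity.
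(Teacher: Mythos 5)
Your proposal is correct and takes essentially the same route as the paper's proof: for assertion (a) you verify the PDI \eqref{eq: approx reach: HJB PDE under approx BRS collection} for each quadratic via the change of variables $w = Q_i^{-\half}(x-q_i)$ and Lemma \ref{lemma: ellipsoidal approx: lower bound on norm over transformed unit ball}, then combine Lemmas \ref{lemma: approx reach: min and max function over supersolutions and subsolutions} and \ref{lemma: ellipsoidal approx: min over quadratics attached sublevel set} with Theorem \ref{theorem: approx reach: HJB equation for under-approximate BRS}, exactly as in the paper. For assertion (b), tracking $V_i = \overline{e}_i + 1$ and using Cauchy--Schwarz with the actual measurable control is just a repackaging of the paper's bound via the Hamiltonian maximum, and your absolute-continuity/first-crossing invariance argument (including the degenerate case $\calQ^{\half}_{i,\star}\hat{w}_i = 0$) mirrors the paper's contradiction argument, so no gap remains.
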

\begin{proof} \underline{Assertion \emph{(\ref{item: ellipsoidal approx: under-approx BRS assertion 1})}}. Fix any $i \in \{1,\ldots,n_q\}$ and note that by Assumption \ref{assumption: ellipsoidal approx: S matrix choice for positive definiteness}, $Q_i(t)\in\spd{n}$, which implies $Q^{-1}_i(t)$ exists for all $t\in[0,T]$. Define $\overline{e}_i\in\cC^1([0,T]\times\R^n;\,\R)$ by
\begin{equation}
    \overline{e}_i(t,x) \doteq \iprod{x-q_i(t)}{Q_i^{-1}(t)(x-q_i(t))} -1 ,
    \label{eq: ellipsoidal approx: under-approx BRS proof 1}
\end{equation}
where $\{q_i,Q_i\}$ is the solution to \eqref{eq: ellipsoidal approx: under approx BRS dynamics + terminal condition}-\eqref{eq: ellipsoidal approx: under approx BRS Q star matrix}. Since $Q_i(t)\in\spd{n}$ also implies the existence of $Q^{\half}_i(t)$, $\calQ_{i,\star}(t)$ is well-defined. Positive definiteness of $P$ implies $\calQ_{i,\star}(t)\in \spsd{n}$, so $\calQ^\half_{i,\star}(t)\in \spsd{n}$. Differentiating \eqref{eq: ellipsoidal approx: under-approx BRS proof 1} yields
\begin{align}
    \tfrac{\partial}{\partial t}\overline{e}_i &= -2\iprod{\dot{q}_i(t)}{Q_i^{-1}(t)(x-q_i(t))} +\iprod{x - q_i(t)}{\tfrac{d}{d t}Q^{-1}_i(t)(x - q_i(t))}\label{eq: ellipsoidal approx: under-approx BRS proof 2},\\
    \nabla \overline{e}_i &= 2Q^{-1}_i(t)(x - q_i(t)). \label{eq: ellipsoidal approx: under-approx BRS proof 3}
\end{align}
Noting that $\frac{d}{dt}Q^{-1}_i(t) = -Q^{-1}_i(t)\dot{Q}_i(t)Q^{-1}_i(t)$, substitution of \eqref{eq: ellipsoidal approx: under-approx BRS proof 2}-\eqref{eq: ellipsoidal approx: under-approx BRS proof 3} in the left-hand side of \eqref{eq: approx reach: HJB PDE under approx BRS collection} yields
\begin{align}
-\tfrac{\partial}{\partial t}\overline{e}_i + H(t, x, \nabla\overline{e}_i) &= 2\iprod{\dot{q}_i}{Q_i^{-1}(x-q_i)}+\iprod{x - q_i}{Q^{-1}_i\dot{Q}_iQ^{-1}_i(x - q_i)} \nn\\
    &+ 2\max_{u\in\mathcal{E}(p,P)}\iprod{-Q^{-1}_i(x - q_i)}{Ax+Bu},
    \label{eq: ellipsoidal approx: under-approx BRS proof 4}
\end{align}
in which the time dependence is omitted for brevity. Standard results on support functions over ellipsoidal sets (see \cite[Lemma 10]{LMD23}) yield
\begin{align}
    &\max_{u\in\mathcal{E}(p,P)}\iprod{-B^TQ^{-1}_i(x-q_i)}{u} = \nn \\
    &\qquad \qquad  \iprod{-B^TQ^{-1}_i(x-q_i)}{p}  +\sqrt{\iprod{x-q_i}{Q^{-1}_iBPB^TQ^{-1}_i(x-q_i)}}. \label{eq: ellipsoidal approx: under-approx BRS proof 5}
\end{align}
Substituting \eqref{eq: ellipsoidal approx: under-approx BRS proof 5} in \eqref{eq: ellipsoidal approx: under-approx BRS proof 4} yields
\begin{align}
        -\tfrac{\partial}{\partial t}\overline{e}_i + H(t, x, \nabla\overline{e}_i) &= 2\iprod{x - q_i}{Q^{-1}_i(\dot{q}_i - Aq_i - Bp)} \nn\\
          & + \iprod{x - q_i}{\left[Q^{-1}_i\dot{Q}_iQ^{-1}_i - A^TQ^{-1}_i - Q^{-1}_iA\right](x - q_i)}\nn\\
        &  + 2\sqrt{\iprod{x-q_i}{Q^{-1}_iBPB^TQ^{-1}_i(x-q_i)}},
       \label{eq: ellipsoidal approx: under-approx BRS proof 6}
\end{align}
so that by \eqref{eq: ellipsoidal approx: under approx BRS dynamics + terminal condition}-\eqref{eq: ellipsoidal approx: under approx BRS Q star matrix}, 
\begin{align}
        -\tfrac{\partial}{\partial t}\overline{e}_i + H(t, x, \nabla\overline{e}_i) &= 2\sqrt{\iprod{x-q_i}{Q^{-\half}_i \calQ_{i,\star}Q^{-\half}_i(x-q_i)}}\nn \\
   & -\iprod{x - q_i}{Q^{-\half}_i\left(\calQ^{\half}_{i,\star}S_i + S^T_i\calQ^{\half}_{i,\star}\right)Q^{-\half}_i(x - q_i)}.
    \label{eq: ellipsoidal approx: under-approx BRS proof 7}
\end{align} 
To satisfy the PDI in \eqref{eq: approx reach: HJB PDE under approx BRS collection}, the right-hand side of \eqref{eq: ellipsoidal approx: under-approx BRS proof 7} is required to be non-negative for points $(t,x)\in\overline{e}_i^{-1}((-\infty,0))$. Equivalently, with $x = Q^\half_i(t)w + q_i(t)$, the following can instead be considered for all $\norml{w} < 1$ (see Remark \ref{remark: ellipsoidal approx: ellipsoid is affine transformation}):
\begin{equation}
    2\bnorml{\calQ^{\half}_{i,\star}w} - \iprod{w}{\left(\calQ^{\half}_{i,\star}S_i + S^T_i\calQ^{\half}_{i,\star}\right) w} \geq 0,
    \label{eq: ellipsoidal approx: under-approx BRS proof 8}
\end{equation}
which holds by Lemma \ref{lemma: ellipsoidal approx: lower bound on norm over transformed unit ball}. Thus, for all $i\in\{1,\ldots,n_q\}$,  $\overline{e}_i$ satisfies the PDI in \eqref{eq: approx reach: HJB PDE under approx BRS collection}. By Lemma \ref{lemma: approx reach: min and max function over supersolutions and subsolutions},
\begin{equation*}
    \overline{e}(t,x) \doteq \min_{1 \leq i \leq n_q}\overline{e}_i(t,x),\quad \forall (t,x)\in[0,T]\times\R^n,
\end{equation*}
is a viscosity supersolution of the HJB PDE in \eqref{eq: approx reach: HJB PDE} with $\Omega = \overline{e}^{-1}((-\infty,0))$ and the terminal condition $\overline{e}(T,x) = \iprod{x-x_e}{X^{-1}_e(x-x_e)} -1$ for all $x\in\R^n$. Finally, since Lemma \ref{lemma: ellipsoidal approx: min over quadratics attached sublevel set} ensures that $x\mapsto \overline{e}(t,x)$ is of class $\cZ$ for any $t\in[0,T]$, Theorem \ref{theorem: approx reach: HJB equation for under-approximate BRS} then implies that $\{x\in\R^n \,|\, \overline{e}(t,x) \leq 0 \} = \bigcup^{n_q}_{i=1}\mathcal{E}\left(q_i(t), Q_i(t)\right)\subseteq \mathcal{G}(t)$ for all $t\in[0,T]$. 

\underline{Assertion \emph{(\ref{item: ellipsoidal approx: under-approx BRS assertion 2})}}. Fix any $t\in[0,T]$ and any $i\in\{1,\ldots, n_q\}$ for which $S_i\in\cC\left([0,T];\,\mathbb{O}^n\right)$ satisfies \eqref{eq: ellipsoidal approx: under approx BRS orthogonal matrix} with $x^\star(\cdot)$ being a solution of \eqref{eq: ellipsoidal approx: linear system}. Denote by $u^\star \in \cU[t,T]$ the control corresponding to $x^\star(\cdot)$. Since $\overline{e}_i\in\cC^1([0,T]\times\R^n;\,\R)$, differentiation of $\overline{e}_i$ along $x^\star(\cdot)$ yields a.e. $s\in[t,T]$
\begin{align*}
    -\tfrac{d}{ds}\overline{e}_i(s, x^\star(s)) &= -\tfrac{\partial}{\partial s}\overline{e}_i(s,x^\star(s)) + \iprod{-\grad \overline{e}_i(s,x^\star(s))}{\tfrac{d}{ds}x^\star(s)} \\
    &\leq -\tfrac{\partial}{\partial s}\overline{e}_i(s,x^\star(s)) +\max_{u\in\U}\iprod{-\grad \overline{e}_i(s,x^\star(s))}{A(s)x^\star(s) + B(s)u}.
\end{align*}
Hence, by \eqref{eq: ellipsoidal approx: under-approx BRS proof 4}-\eqref{eq: ellipsoidal approx: under-approx BRS proof 7},
\begin{align}
    &-\tfrac{d}{ds}\overline{e}_i(s, x^\star(s))  \leq 2\bnorml{\calQ^{\half}_{i,\star}\hat{w}_i}
    - \iprod{\hat{w}_i}{\left(\calQ^{\half}_{i,\star}S_i + S^T_i\calQ^{\half}_{i,\star}\right)\hat{w}_i},
        \label{eq: ellipsoidal approx: under-approx BRS proof 9}
\end{align}
with $\hat{w}_i$ as defined in \eqref{eq: ellipsoidal approx: hat w for orthogonal matrix}. Consider the cases: \emph{i)} $\bnorml{\calQ^{\half}_{i,\star}\hat{w}_i}\neq 0$ and \emph{ii)} $\bnorml{\calQ^{\half}_{i,\star}\hat{w}_i}= 0$. Beginning with case \emph{i)}, first note that $\bnorml{\calQ^{\half}_{i,\star}\hat{w}_i} \neq 0$ implies $\norml{\hat{w}_i}\neq 0$. From \eqref{eq: ellipsoidal approx: under approx BRS orthogonal matrix},
\begin{align*}
     & \frac{1}{\norml{\hat{w}_i}}\iprod{\calQ^{\half}_{i,\star}\hat{w}_i}{S_i \hat{w}_i}  = \frac{1}{\bnorml{\calQ^{\half}_{i,\star}\hat{w}_i}} \iprod{\calQ^{\half}_{i,\star}\hat{w}_i}{ \calQ^{\half}_{i,\star}\hat{w}_i}\\
     &\implies \iprod{\hat{w}_i}{\left(\calQ^{\half}_{i,\star} S_i + S^T_i\calQ^{\half}_{i,\star}\right)\hat{w}_i} = 2\norml{\hat{w}_i}\bnorml{\calQ^{\half}_{i,\star}\hat{w}_i}.
\end{align*}
Thus, for $\overline{e}_i(s,x^\star(s)) \geq 0$, or equivalently $\norml{\hat{w}_i} = \norml{Q^{-\half}_i(x^\star - q_i)} \geq 1$ (see Remark \ref{remark: ellipsoidal approx: ellipsoid is affine transformation}), the following holds
\begin{equation}
2\norml{\calQ^{\half}_{i,\star}\hat{w}_i} -  \iprod{\hat{w}_i}{\left(\calQ^{\half}_{i,\star} S_i + S^T_i\calQ^{\half}_{i,\star}\right)\hat{w}_i} \leq 0.
\label{eq: ellipsoidal approx: under-approx BRS proof 10}
\end{equation}

Now consider case \emph{ii)} where $\bnorml{\calQ^{\half}_{i,\star}\hat{w}_i } = 0$, which implies $\calQ^{\half}_{i,\star}\hat{w}_i = 0$ and by substitution into the right-hand side of \eqref{eq: ellipsoidal approx: under-approx BRS proof 9} the inequality in \eqref{eq: ellipsoidal approx: under-approx BRS proof 10} also holds. Then, from \eqref{eq: ellipsoidal approx: under-approx BRS proof 9},
\begin{equation}
    -\tfrac{d}{ds}\overline{e}_i(s, x^\star(s)) \leq 0, \quad \forall s \in \{s \in [t, T]\,|\, \overline{e}_i(s, x^\star(s)) \geq 0\},
    \label{eq: ellipsoidal approx: under-approx BRS proof 11}
\end{equation}
which implies that $\overline{e}_i(s,x^\star(s)) \leq 0$ for all $s\in [t,T]$. To demonstrate this, assume that the contrary is true, that is, $\overline{e}_i(s_1,x^\star(s_1)) > 0$ for some $s_1 \in [t,T]$. Since $x^\star(T) \in\mathcal{X}$ implies $\overline{e}_i(T,x^\star(T)) = g(x^\star(T)) \leq 0$, there exists a time $s_2 \in (s_1, T]$ such that $\overline{e}_i(s_2, x^\star(s_2))$ is non-positive. Let $s_2$ be the first such time, that is, $s_2 \doteq \min\{s \in (s_1,T]\,|\,\overline{e}_i(s, x^\star(s))\leq 0\}$. Then, $\overline{e}_i(s, x^\star(s)) \geq 0$ for all $s \in [s_1, s_2]$. From \eqref{eq: ellipsoidal approx: under-approx BRS proof 11},
\begin{align*}
    0&= \overline{e}_i(s_2, x^\star(s_2)) =\overline{e}_i(s_1, x^\star(s_1)) + \integral{s_1}{s_2}{\tfrac{d}{ds}\overline{e}_i(s, x^\star(s))}{s}\geq \overline{e}_i(s_1, x^\star(s_1)) ,
\end{align*}
which yields the desired contradiction. Thus, $\overline{e}_i(s,x^\star(s)) \leq 0$ for all $s\in[t,T]$, which implies $x^\star(t) \in \mathcal{E}(q_i(t), Q_i(t))$. 
\end{proof}

\begin{remark} Assumption \ref{assumption: ellipsoidal approx: S matrix choice for positive definiteness} is asserted to ensure that the ellipsoids $\mathcal{E}(q_i(t), Q_i(t))$ in \eqref{eq: ellipsoidal approx: under-approx BRS as a union of ellipsoids} are non-degenerate for $t\in[0,T]$. It holds in the special case where $S_i(t) = \I$ for all $t\in[0,T]$, whereupon Theorem \ref{theorem: ellipsoidal approx: under-approx BRS} recovers \cite[Theorem 11]{LMD23}. It should be noted, however, that $S_i\in\cC([0,T];\,\mathbb{O}^n)$ is not a sufficient condition for Assumption \ref{assumption: ellipsoidal approx: S matrix choice for positive definiteness} to hold. For example, consider the single integrator system
\begin{equation*}
    \Dot{x}(s) = u(s), \quad \text{a.e. } s \in (0,1), 
\end{equation*}
with $\U = \mathcal{E}(0, 1)$ and $\mathcal{X} =\mathcal{E}(0, 0.1^2)$. Then, taking $T = 1$ and selecting $S_i(t) \equiv -1$ for all $t\in[0,1]$, the FVP for $Q_i$ reduces to 
\begin{equation}
    \Dot{Q}_i(t) = 2Q^{\half}_i(t), \quad Q_i(1) = 0.01.
    \label{eq: ellipsoidal approx: remark regarding existence of positive definite Q}
\end{equation}
It can be verified that a solution of \eqref{eq: ellipsoidal approx: remark regarding existence of positive definite Q} is
\begin{equation*}
    Q_i(t) = \begin{cases} (t-0.9)^2, \quad  t\in[0.9, 1], \\0, \quad t\in[0,0.9),
    \end{cases}
\end{equation*}
however, $Q_i(t)$ is not positive definite on the interval $[0,1]$. A general analysis of existence and uniqueness of solutions for \eqref{eq: ellipsoidal approx: under approx BRS dynamics + terminal condition}-\eqref{eq: ellipsoidal approx: under approx BRS Q star matrix} is not considered in the current work.
\label{remark: ellipsoidal approx: positive definiteness of ellipsoid shape}
\end{remark}

The orthogonal matrix $S_i(t)$ in \eqref{eq: ellipsoidal approx: under approx BRS Qu matrix} is a generalisation that allows for any solution of \eqref{eq: ellipsoidal approx: linear system} to be contained in each under-approximating ellipsoid $\mathcal{E}(q_i(t),Q_i(t))$. Intuitively, if a solution that lies on the boundary of the exact backwards reachable set $\mathcal{G}(t)$ is selected, then the boundaries of $\mathcal{E}(q_i(t),Q_i(t))$ and $\mathcal{G}(t)$ must coincide. For the problem setting considered in this work, Pontryagin's maximum principle provides both necessary and sufficient conditions for optimality and can hence be used to generate solutions that lie on $\partial\mathcal{G}(t)$. In particular, boundary solutions $x^\star(\cdot)$ of \eqref{eq: ellipsoidal approx: linear system} can be obtained via the following final value problem
\begin{equation}
\left\{\;
\begin{split}
    &\Dot{x}^\star(s) = A(s)x^\star(s) + B(s)u^\star(s),\\
    &\Dot{\lambda}(s) = -A^T(s)\lambda(s),\\
    &u^\star(s) \in \argmax_{u\in\U}\iprod{-\lambda(s)}{B(s)u}, \\
    &x^\star(T) \in\partial \mathcal{X}, \quad \lambda(T) = 2X^{-1}_e(x^\star(T)-x_e).
    \end{split}\right. 
    \label{eq: ellipsoidal approx: PMP backwards dynamics for LTV system}
\end{equation}
This would then recover the tightness property discussed in \cite{kurzhanski2002reachability} without using arguments involving set operations.
\begin{corollary}
       Consider the statements in Theorem \ref{theorem: ellipsoidal approx: under-approx BRS} and let all of its assumptions hold. If, in addition to the statements of assertion (\ref{item: ellipsoidal approx: under-approx BRS assertion 2}), $x^\star(\cdot)$ is a solution of \eqref{eq: ellipsoidal approx: linear system} satisfying the FVP in \eqref{eq: ellipsoidal approx: PMP backwards dynamics for LTV system} a.e. $s\in(t,T)$ with $u^\star \in \cU[t,T]$, then, $x^\star(t) \in \partial\mathcal{E}(q_i(t), Q_i(t)) \cap \partial\mathcal{G}(t)$ for all $t\in[0,T]$.
\label{corollary: ellipsoidal approx: tight under-approx of BRS}
\end{corollary}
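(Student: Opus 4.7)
The plan is to combine Theorem~\ref{theorem: ellipsoidal approx: under-approx BRS} assertion~(\ref{item: ellipsoidal approx: under-approx BRS assertion 2}), which already places $x^\star(t)$ inside $\mathcal{E}(q_i(t),Q_i(t))$, with a separate argument that the Pontryagin trajectory generated by \eqref{eq: ellipsoidal approx: PMP backwards dynamics for LTV system} lies on $\partial\mathcal{G}(t)$. Since Theorem~\ref{theorem: ellipsoidal approx: under-approx BRS} assertion~(\ref{item: ellipsoidal approx: under-approx BRS assertion 1}) yields $\mathcal{E}(q_i(t),Q_i(t))\subseteq\underline{\mathcal{G}}(t)\subseteq\mathcal{G}(t)$, once $x^\star(t)$ is known to sit on $\partial\mathcal{G}(t)$ it cannot possibly lie in the interior of the ellipsoid, so it must lie on $\partial\mathcal{E}(q_i(t),Q_i(t))$ as well. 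This reduces the proof to two ingredients: first, containment in the ellipsoid via assertion~(\ref{item: ellipsoidal approx: under-approx BRS assertion 2}); second, membership in $\partial\mathcal{G}(t)$ via the PMP.

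For the first ingredient, I would check that the hypotheses of assertion~(\ref{item: ellipsoidal approx: under-approx BRS assertion 2}) are met. The control $u^\star\in\cU[t,T]$ from \eqref{eq: ellipsoidal approx: PMP backwards dynamics for LTV system} is admissible (measurability follows since the maximiser of a linear functional over an ellipsoid is a continuous function of the costate), and the selection of $S_i$ is already assumed to satisfy \eqref{eq: ellipsoidal approx: under approx BRS orthogonal matrix}; moreover the PMP terminal condition $x^\star(T)\in\partial\mathcal{X}\subseteq\mathcal{X}$ matches the terminal requirement of the assertion. Hence $x^\star(t)\in\mathcal{E}(q_i(t),Q_i(t))$ for every $t\in[0,T]$.

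For the second ingredient, I would invoke the method of characteristics for the HJB PDE \eqref{eq: background: HJB PDE with terminal condition BRS}. For the LTV system \eqref{eq: ellipsoidal approx: linear system} with a convex input ellipsoid and a smooth strictly convex terminal ellipsoid, the PMP is both necessary and sufficient for optimality, and \eqref{eq: ellipsoidal approx: PMP backwards dynamics for LTV system} is precisely its realisation. The terminal costate $\lambda(T)=2X_e^{-1}(x^\star(T)-x_e)=\nabla g(x^\star(T))$ is the outward normal to $\mathcal{X}$ at $x^\star(T)\in\partial\mathcal{X}$, and along the resulting characteristic $\lambda(s)=\nabla v(s,x^\star(s))$ with $u^\star(s)$ attaining the Hamiltonian in \eqref{eq: background: HJB Hamiltonian}. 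Because the problem is Mayer (no running cost), $v$ is constant along characteristics, so $v(s,x^\star(s))=g(x^\star(T))=0$ for all $s\in[t,T]$. Since the ellipsoidal terminal function $g$ is of class $\cZ$, Lemma~\ref{lemma: approx reach: value function satisfies attached sublevel set} gives $x\mapsto v(t,x)$ of class $\cZ$ for every $t\in[0,T]$, and Lemma~\ref{lemma: approx reach: closure and interior of sublevel sets} then identifies $\partial\mathcal{G}(t)$ with the zero level set of $v(t,\cdot)$. Therefore $x^\star(t)\in\partial\mathcal{G}(t)$.

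To conclude, suppose for contradiction that $x^\star(t)\in\text{int}\left(\mathcal{E}(q_i(t),Q_i(t))\right)$. Then some ball $\B_r(x^\star(t))$ lies in $\mathcal{E}(q_i(t),Q_i(t))\subseteq\mathcal{G}(t)$, forcing $x^\star(t)\in\text{int}\left(\mathcal{G}(t)\right)$ and contradicting $x^\star(t)\in\partial\mathcal{G}(t)$. Hence $x^\star(t)\in\partial\mathcal{E}(q_i(t),Q_i(t))\cap\partial\mathcal{G}(t)$ for every $t\in[0,T]$. The only non-routine step is the second one: the PMP sufficiency and the characteristic identification $v(s,x^\star(s))\equiv 0$ must be justified carefully within the viscosity framework used throughout the paper, as they hinge on the smoothness of $v$ along the optimal trajectory rather than on any global regularity of $v$. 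For a linear–convex Mayer problem of the form \eqref{eq: ellipsoidal approx: linear system} this is standard, but it is the principal obstacle in an otherwise topological argument.
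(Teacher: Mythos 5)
Your skeleton matches the paper's: use Theorem~\ref{theorem: ellipsoidal approx: under-approx BRS}(\ref{item: ellipsoidal approx: under-approx BRS assertion 2}) for $x^\star(t)\in\mathcal{E}(q_i(t),Q_i(t))$, show separately that $x^\star(t)\in\partial\mathcal{G}(t)$, and then use $\mathcal{E}(q_i(t),Q_i(t))\subseteq\mathcal{G}(t)$ from assertion~(\ref{item: ellipsoidal approx: under-approx BRS assertion 1}) plus the interior-preservation argument to rule out $x^\star(t)\in\mathrm{int}\left(\mathcal{E}(q_i(t),Q_i(t))\right)$; the class-$\cZ$ chain (Lemma~\ref{lemma: ellipsoidal approx: min over quadratics attached sublevel set} with $n_q=1$, then Lemmas~\ref{lemma: approx reach: value function satisfies attached sublevel set} and \ref{lemma: approx reach: closure and interior of sublevel sets}) is exactly how the paper converts $v(t,x^\star(t))=0$ into boundary membership. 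The gap is the step you yourself flag as the ``principal obstacle'': you assert, rather than prove, that the trajectory generated by \eqref{eq: ellipsoidal approx: PMP backwards dynamics for LTV system} is optimal, and the justification you sketch --- the characteristic identification $\lambda(s)=\nabla v(s,x^\star(s))$ and constancy of $v$ along characteristics --- is the wrong tool here, since no differentiability of $v$ along the trajectory is available a priori in the viscosity framework, and invoking it would itself require an argument at least as long as the one you are omitting.

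What closes the gap (and is the actual content of the paper's proof) is an elementary convexity computation that never touches $\nabla v$. For an arbitrary $\hat u\in\cU[t,T]$ with trajectory $\hat x(\cdot)$ started from $\hat x(t)=x^\star(t)$, convexity and differentiability of the quadratic $g$ give $g(\hat x(T))\geq g(x^\star(T))+\iprod{\nabla g(x^\star(T))}{\hat x(T)-x^\star(T)}$; writing $\hat x(T)-x^\star(T)=\int_t^T\Phi_A(T,s)B(s)(\hat u(s)-u^\star(s))\,ds$ and recognising $\lambda(s)=\Phi_{-A^T}(s,T)\nabla g(x^\star(T))$ as the adjoint in \eqref{eq: ellipsoidal approx: PMP backwards dynamics for LTV system}, the pointwise maximisation property of $u^\star$ makes the correction term nonnegative, so $g(x^\star(T))\leq g(\varphi(T;t,x^\star(t),\hat u))$ for every admissible $\hat u$. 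Taking the infimum yields $g(x^\star(T))\leq v(t,x^\star(t))$, while admissibility of $u^\star$ gives the reverse inequality, so $v(t,x^\star(t))=g(x^\star(T))=0$ because $x^\star(T)\in\partial\mathcal{X}$. In effect this is a self-contained proof of the ``standard'' PMP sufficiency you wanted to cite, specialised to the linear--convex Mayer setting; either reproduce it or cite a Mangasarian-type sufficiency result explicitly, but do not route the argument through $\nabla v$. With that step supplied, the remainder of your proposal (admissibility of $u^\star$, the terminal condition $x^\star(T)\in\partial\mathcal{X}\subseteq\mathcal{X}$ for assertion~(\ref{item: ellipsoidal approx: under-approx BRS assertion 2}), and the final contradiction argument) is correct and coincides with the paper.
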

\begin{proof}
Fix any $t\in[0,T]$, $\hat{u}\in\cU[t,T]$, and $x^\star(T) \in \partial\mathcal{X}$. Let $\hat{x}(\cdot) \doteq \varphi(\cdot;t,x^\star(t),\hat{u})$ be the corresponding solution of \eqref{eq: ellipsoidal approx: linear system} evolved from $\hat{x}(t) = x^\star(t)$ under $\hat{u}$. Since $x\mapsto g(x) = \iprod{x-x_e}{X^{-1}_e(x-x_e)}$ is continuously differentiable and convex, 
\begin{equation}
    g(\hat{x}(T)) \geq g(x^\star(T)) + \iprod{\nabla g(x^\star(T))}{\hat{x}(T)-x^\star(T)}.
   \label{eq: ellipsoidal approx: boundary of BRS via PMP proof 1}
\end{equation}
Let $\Phi_A:[t,T]\times[t,T]\rightarrow\R^{n\times n}$ denote the state-transition matrix associated with the dynamics $\dot{x}(s) = A(s)x(s)$, $s\in (t,T)$. Noting that $\left(\Phi_A(T,s)\right)^T = \Phi_{-A^T}(s,T)$ (see for example \cite[Theorem A.4.5, p.291]{CS:04}) and
\begin{align*}
    \hat{x}(T) - x^\star(T) &= \Phi_{A}(T, t)x^\star(t) + \integral{t}{T}{\Phi_{A}(T, s)B(s)\hat{u}(s)}{s} \\
     &-\Phi_{A}(T, t)x^\star(t) - \integral{t}{T}{\Phi_{A}(T, s)B(s)u^\star(s)}{s},
\end{align*}
the right-most term in \eqref{eq: ellipsoidal approx: boundary of BRS via PMP proof 1} can be rewritten as 
\begin{align}
    \iprod{\nabla g(x^\star(T))}{\hat{x}(T)-x^\star(T)} &= \iprod{\nabla g(x^\star(T))}{\integral{t}{T}{\Phi_{A}(T, s)B(s)(\hat{u}(s)-u^\star(s))}{s}} \nn\\
    &= \integral{t}{T}{\iprod{\Phi_{-A^T}(s,T)\grad g(x^\star(T))}{B(s)(\hat{u}(s)-u^\star(s))}}{s}.\label{eq: ellipsoidal approx: boundary of BRS via PMP proof 2}
\end{align}
Furthermore, $\lambda(s) = \Phi_{-A^T}(s,T)\grad g(x^\star(T))$, $s\in[t,T]$, with $\grad g(x^\star(T)) = 2X^{-1}_e(x^\star(T) - x_e)$ is the unique adjoint solution to the FVP in \eqref{eq: ellipsoidal approx: PMP backwards dynamics for LTV system}. As $u^\star(s)$ is, almost everywhere, a maximiser of $\iprod{-\lambda(s)}{B(s)u^\star}$ on $\U$, $\iprod{-\lambda(s)}{B(s)u^\star} \geq \iprod{-\lambda(s)}{B(s)\hat{u}}$ a.e. $s\in(t,T)$. Continuing from \eqref{eq: ellipsoidal approx: boundary of BRS via PMP proof 2},
\begin{equation}
     \iprod{\nabla g(x^\star(T))}{\hat{x}(T)-x^\star(T)} = \integral{t}{T}{\iprod{-\lambda(s)}{B(s)(u^\star(s)-\hat{u}(s))}}{s} \geq 0.
     \label{eq: ellipsoidal approx: boundary of BRS via PMP proof 3}
\end{equation}
Substitution of \eqref{eq: ellipsoidal approx: boundary of BRS via PMP proof 3} in \eqref{eq: ellipsoidal approx: boundary of BRS via PMP proof 1} yields
\begin{equation}
    g(x^\star(T)) \leq g(\hat{x}(T)) = g(\varphi(T;t,x^\star(t), \hat{u})).
     \label{eq: ellipsoidal approx: boundary of BRS via PMP proof 4}
\end{equation}
As $\hat{u}\in\cU[t,T]$ is arbitrary, taking the inf over \eqref{eq: ellipsoidal approx: boundary of BRS via PMP proof 4} yields
\begin{equation}
   g(x^\star(T)) \leq \inf_{u\in\cU[t,T]}g(\varphi(T;t,x^\star(t),u)) = v(t,x^\star(t)),\label{eq: ellipsoidal approx: boundary of BRS via PMP proof 5} 
\end{equation}
where $v$ is the value function \eqref{eq: background: value function} for the LTV system \eqref{eq: ellipsoidal approx: linear system}. By optimality, $v(t,x^\star(t)) \leq g(x^\star(T))$ and together with \eqref{eq: ellipsoidal approx: boundary of BRS via PMP proof 5} yields $g(x^\star(T))=v(t,x^\star(t))$. Moreover, since 
\begin{equation*}
    \partial \mathcal{X} = \{x\in\R^n\,|\,\iprod{x-x_e}{X^{-1}_e(x-x_e)} = 1\}
\end{equation*}
and $x^\star(T) \in \partial \mathcal{X}$, then $g(x^\star(T))=0$, which also implies $v(t,x^\star(t)) = 0$. From Theorem \ref{theorem: background: HJB equation for BRS}, it follows that $x^\star(t)\in \mathcal{G}(t)$. Moreover, $g$ is of class $\cZ$ (see Lemma \ref{lemma: ellipsoidal approx: min over quadratics attached sublevel set} with $n_q = 1$), so by Lemma \ref{lemma: approx reach: value function satisfies attached sublevel set}, $x\mapsto v(t,x)$ must also be of class $\cZ$. Then, by Lemma \ref{lemma: approx reach: closure and interior of sublevel sets},
\begin{align}
    &x^\star(t) \notin  \{x\in\R^n\,|\,v(t,x) < 0\} = \text{int}\left(\mathcal{G}(t)\right), \nn \\
    &\implies x^\star(t) \in \partial\mathcal{G}(t). \label{eq: ellipsoidal approx: boundary of BRS via PMP proof 6}
\end{align} 

To show that $x^\star(t) \in \partial \mathcal{E}(q_i(t),Q_i(t))$, first note that by Theorem \ref{theorem: ellipsoidal approx: under-approx BRS} assertion \emph{(b)},  $x^\star(t) \in \mathcal{E}(q_i(t),Q_i(t))$, which implies $x^\star(t)\notin \text{int}\left(\mathcal{E}(q_i(t),Q_i(t))\right)$. To see this, assume that the contrary is true, i.e., $x^\star(t)\in \text{int}\left(\mathcal{E}(q_i(t),Q_i(t))\right)$. From Theorem \ref{theorem: ellipsoidal approx: under-approx BRS} assertion \emph{(a)}, $\mathcal{E}(q_i(t),Q_i(t))\subseteq \mathcal{G}(t)$, which implies that $\text{int}\left(\mathcal{E}(q_i(t),Q_i(t))\right)\subseteq \text{int}\left(\mathcal{G}(t)\right)$ as the interior operation preserves subset relationships. So $x^\star(t)\in\text{int}\left(\mathcal{G}(t)\right)$, which contradicts \eqref{eq: ellipsoidal approx: boundary of BRS via PMP proof 6}. Thus, $x^\star(t) \notin  \text{int}\left(\mathcal{E}(q_i(t),Q_i(t))\right)$ so $ x^\star(t) \in \partial \mathcal{E}(q_i(t),Q_i(t))$ for any $t\in[0,T]$. 
\end{proof}
\begin{remark}
   We propose one method to ensure that the orthogonal matrix $S_i(s)$ can be selected to satisfy \eqref{eq: ellipsoidal approx: under approx BRS orthogonal matrix} in Theorem \ref{theorem: ellipsoidal approx: under-approx BRS}. With time dependencies omitted, consider the case where $ \calQ^{\half}_{i,\star}\hat{w}_i\neq 0$, which implies $\hat{w}_i\neq 0$. Let 
   \begin{equation*}
       w \doteq \frac{\hat{w}_i}{\bnorml{\hat{w}_i}}, \quad w^\star \doteq \frac{\calQ^{\half}_{i,\star}\hat{w}_i }{\bnorml{\calQ^{\half}_{i,\star}\hat{w}_i}}.
   \end{equation*}
Since $w$ and $w^\star$ are both of unit norm, the problem of finding $S_i(s)$ to satisfy \eqref{eq: ellipsoidal approx: under approx BRS orthogonal matrix} is equivalent to finding an orthogonal matrix $S$ that will rotate $w$ to $w^\star$. Consider the case where $w \neq w^\star$ so that an orthonormal basis for the span of $\{w, w^\star\}$ can be constructed via the Gram-Schmidt process. Let 
\begin{equation*}
    r \doteq w, \quad r_\perp \doteq \frac{w^\star - \iprod{w^\star}{w}w}{\norml{w^\star - \iprod{w^\star}{w}w}}, 
\end{equation*}
and let $\cos{\theta} = \iprod{r}{w^\star}$ and $\sin{\theta} = \iprod{r_\perp}{w^\star}$ where $\theta$ is the angle of rotation between $w$ and $w^\star$. Consider the Jacobi rotation matrix \cite{constantine1978some}
    \begin{equation}
        S = \I + \sin{\theta}\left(r_\perp r^T - rr^T_{\perp}\right) + (\cos{\theta}-1)\left(rr^T + r_{\perp}r^T_{\perp}\right).
        \label{eq: remark regarding choice of S matrix}
    \end{equation}
    It can be readily verified that $S\in \mathbb{O}^n$ and $Sw = w^\star$, which yields the desired selection of $S_i(s)$ in \eqref{eq: ellipsoidal approx: under approx BRS orthogonal matrix}. In the case where $w = w^\star$, the selection $S = \I$ ensures that \eqref{eq: ellipsoidal approx: under approx BRS orthogonal matrix} holds, which corresponds to a zero rotation angle.
    \label{remark: ellipsoidal approx: rodrigues rotation formula}
    \end{remark}
\subsection{Ellipsoidal Over-approximation of Backwards Reachable Sets}
We now move on to producing an analogous result to Theorem \ref{theorem: ellipsoidal approx: under-approx BRS} for over-approximating the backwards reachable set; this time by the intersection of ellipsoidal sets. An over-approximation of $\mathcal{G}(t)$ can be produced by evolving each ellipsoid $\mathcal{E}(q_i(t),Q_i(t))$ in \eqref{eq: ellipsoidal approx: max over quadratics is intersect} backwards in time according to the FVP defined via \eqref{eq: ellipsoidal approx: linear system} by
\begin{equation}
\left\{\;
\begin{split}
    \dot{q}_i(t) &= A(t)q_i(t) + B(t)p,\\
    \dot{Q}_i(t) &= A(t)Q_i(t)+Q_i(t)A^T(t) -\overline{\calR}_i(t,Q_i(t)),  \\
    q_i(T) &= x_e, \quad Q_i(T) = X_e,
    \end{split} \right.
    \label{eq: ellipsoidal approx: over approx BRS dynamics + terminal condition}
\end{equation}
for all $i\in\{1,\ldots,n_q\}$, a.e. $t\in(0,T)$, in which the operator $\overline{\calR}_i:[0,T]\times\spd{n}\rightarrow\spd{n}$ is defined by
\begin{equation}
    \overline{\calR}_i(t,Q) \doteq \frac{1}{\kappa_i(t)}B(t)PB^T(t) + \kappa_i(t)Q(t),
    \label{eq: ellipsoidal approx: over approx BRS forcing matrix}
\end{equation}
with $\kappa_i:[0,T]\rightarrow \R_{>0}$ being continuous. 
\begin{theorem} Consider the LTV system described by \eqref{eq: ellipsoidal approx: linear system}.
\begin{enumerate}[(a)]
    \item The FVP in \eqref{eq: ellipsoidal approx: over approx BRS dynamics + terminal condition}-\eqref{eq: ellipsoidal approx: over approx BRS forcing matrix} admits a unique solution with $q_i(t)\in\R^n$ and $Q_i(t)\in\spd{n}$ for all $t\in[0,T]$ and for all $i\in\{1,\ldots, n_q\}$. \label{item: ellipsoidal approx: over-approx BRS assertion 1}
    \item The set
\begin{equation}
    \overline{\mathcal{G}}(t) \doteq \bigcap^{n_q}_{i=1}\mathcal{E}\left(q_i(t), Q_i(t)\right)
    \label{eq: ellipsoidal approx: over-approx BRS as an intersect of ellipsoids}
\end{equation}
is an over-approximation of $\mathcal{G}(t)$ for \eqref{eq: ellipsoidal approx: linear system} for all $t\in[0,T]$.\label{item: ellipsoidal approx: over-approx BRS assertion 2}
\end{enumerate}
\label{theorem: ellipsoidal approx: over-approx BRS}
\end{theorem}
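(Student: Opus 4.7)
My plan is to mirror the structure of the proof of Theorem~\ref{theorem: ellipsoidal approx: under-approx BRS}, exploiting the key simplification that the shape dynamics in \eqref{eq: ellipsoidal approx: over approx BRS dynamics + terminal condition}--\eqref{eq: ellipsoidal approx: over approx BRS forcing matrix} are now \emph{linear} in $Q_i$, in contrast to the square-root nonlinearity in \eqref{eq: ellipsoidal approx: under approx BRS Qu matrix}. This linearity lets me dispense with an analogue of Assumption~\ref{assumption: ellipsoidal approx: S matrix choice for positive definiteness} and instead prove well-posedness and positive definiteness of $Q_i(\cdot)$ as a stand-alone first step.

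For assertion~(\ref{item: ellipsoidal approx: over-approx BRS assertion 1}), I would rewrite the shape equation as the Lyapunov-type linear matrix ODE $\dot{Q}_i = (A-\tfrac{\kappa_i}{2}\I)Q_i + Q_i(A^T-\tfrac{\kappa_i}{2}\I) - \tfrac{1}{\kappa_i}B(t)PB^T(t)$ with continuous coefficients. Existence and uniqueness of a $\cC^1$ solution on $[0,T]$ then follow from standard linear ODE theory, as does the analogous claim for $q_i$. To obtain $Q_i(t)\in\spd{n}$, I would appeal to the variation-of-parameters representation
\begin{equation*}
Q_i(t) = \Psi(t,T)\,X_e\,\Psi(t,T)^T + \int_{t}^{T}\Psi(t,s)\tfrac{1}{\kappa_i(s)}B(s)PB^T(s)\Psi(t,s)^T\,ds,
\end{equation*}
where $\Psi(\cdot,\cdot)$ is the state-transition matrix of $\dot{X}=(A(t)-\tfrac{\kappa_i(t)}{2}\I)X$. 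Invertibility of $\Psi(t,T)$ together with $X_e\in\spd{n}$ makes the first summand positive definite, while the integrand is positive semi-definite, so $Q_i(t)\in\spd{n}$ for all $t\in[0,T]$.

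For assertion~(\ref{item: ellipsoidal approx: over-approx BRS assertion 2}), I would define $\underline{e}_i(t,x) \doteq \iprod{x-q_i(t)}{Q_i^{-1}(t)(x-q_i(t))} - 1$, which is in $\cC^1([0,T]\times\R^n;\,\R)$ by the first part, and repeat the derivative computations from \eqref{eq: ellipsoidal approx: under-approx BRS proof 2}--\eqref{eq: ellipsoidal approx: under-approx BRS proof 7} with the over-approximation dynamics \eqref{eq: ellipsoidal approx: over approx BRS dynamics + terminal condition}--\eqref{eq: ellipsoidal approx: over approx BRS forcing matrix} and the change of variable $w\doteq Q_i^{-\half}(x-q_i)$; I expect this to collapse to
\begin{equation*}
-\tfrac{\partial}{\partial t}\underline{e}_i + H(t,x,\nabla\underline{e}_i) = 2\bnorml{\calQ_{i,\star}^{\half}w} - \iprod{w}{\left(\tfrac{1}{\kappa_i}\calQ_{i,\star}+\kappa_i\I\right)w}.
\end{equation*}
On the set $\underline{e}_i^{-1}((0,\infty))$, which coincides with $\{\norml{w}>1\}$, Lemma~\ref{lemma: ellipsoidal approx: upper bound on norm over transformed unit ball} (applied with $Q=\calQ_{i,\star}(t)$ and $\kappa=\kappa_i(t)>0$) bounds the right-hand side above by zero, so each $\underline{e}_i$ satisfies the subsolution PDI \eqref{eq: approx reach: HJB PDE over approx BRS collection}. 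Lemma~\ref{lemma: approx reach: min and max function over supersolutions and subsolutions} then makes $\underline{e}\doteq\max_i \underline{e}_i$ a viscosity subsolution of \eqref{eq: approx reach: HJB PDE} on $\underline{e}^{-1}((0,\infty))$ with terminal data $\underline{e}(T,x)=g(x)$, and Theorem~\ref{theorem: approx reach: HJB equation for over-approximate BRS} yields $\bigcap_{i=1}^{n_q}\mathcal{E}(q_i(t),Q_i(t)) = \overline{\mathcal{G}}(t) \supseteq \mathcal{G}(t)$.

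The subsolution computation in part~(\ref{item: ellipsoidal approx: over-approx BRS assertion 2}) is essentially a line-for-line adaptation of the under-approximation proof with Lemma~\ref{lemma: ellipsoidal approx: upper bound on norm over transformed unit ball} replacing Lemma~\ref{lemma: ellipsoidal approx: lower bound on norm over transformed unit ball}, so I expect no surprises there. The step that warrants the most care is positive definiteness in part~(\ref{item: ellipsoidal approx: over-approx BRS assertion 1}): without $Q_i(t)\in\spd{n}$ for all $t\in[0,T]$, the quadratic $\underline{e}_i$ is not even defined, so the entire construction collapses. The explicit variation-of-parameters identity above is the cleanest route to that positivity, and it is precisely the structural advantage that linearity of \eqref{eq: ellipsoidal approx: over approx BRS forcing matrix} confers over the under-approximation setting.
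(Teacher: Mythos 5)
Your proposal is correct and follows essentially the same route as the paper's proof: assertion (a) via the rewriting of the shape equation as a linear Lyapunov-type ODE and the variation-of-parameters representation (the paper's formula $\Phi_i(t,T)X_e\Phi_i^T(t,T) - \int_T^t \tfrac{1}{\kappa_i}\Phi_i B P B^T \Phi_i^T\,ds$ is exactly your expression after flipping the integration limits), with positivity from $X_e\in\spd{n}$ and the semi-definite integrand; and assertion (b) by showing each $\underline{e_i}$ satisfies the subsolution PDI via Lemma \ref{lemma: ellipsoidal approx: upper bound on norm over transformed unit ball} after the change of variable $x=Q_i^{\half}w+q_i$, then invoking Lemma \ref{lemma: approx reach: min and max function over supersolutions and subsolutions} and Theorem \ref{theorem: approx reach: HJB equation for over-approximate BRS}. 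No gaps; your emphasis on positive definiteness of $Q_i(t)$ as the load-bearing step matches the paper's treatment.
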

\begin{proof}
\underline{Assertion \emph{(\ref{item: ellipsoidal approx: over-approx BRS assertion 1})}}. Continuity of $A(\cdot)$ and $B(\cdot)$ implies that the map $q\mapsto A(t)q + B(t)p$ is Lipschitz continuous in $q$, uniformly in $t$. Existence and uniqueness of solutions for $q_i$ then follows, see for example \cite[Theorem 3.2, p.93]{khalil2002nonlinear}. Omitting time dependencies for brevity, 
\begin{equation}
    \dot{Q}_i = \left(A - \tfrac{1}{2}\kappa_i\I\right)Q_i+Q_i\left(A - \tfrac{1}{2}\kappa_i\I\right)^T-\tfrac{1}{\kappa_i}BPB^T.
    \label{eq: ellipsoidal approx: over-approx BRS proof 0}
\end{equation}
A unique solution of \eqref{eq: ellipsoidal approx: over-approx BRS proof 0} with the terminal condition $Q_i(T) = X_e$ exists, see for example \cite[Theorem 1]{behr2019solution}, and is given by
\begin{align}
    Q_i(t) &= \Phi_{i}(t,T)X_e \Phi^T_{i}(t,T) - \integral{T}{t}{\tfrac{1}{\kappa_i(s)}\Phi_{i}(t, s)B(s)PB^T(s)\Phi^T_{i}(t,s)}{s},
    \label{eq: ellipsoidal approx: over-approx BRS proof 1}
\end{align}
for all $t\in[0,T]$, where $\Phi_{i}:[t,T]\times[t,T]\rightarrow\R^{n\times n}$ denotes the state-transition matrix associated with $\dot{x}(s) = \left(A(s) - \frac{1}{2}\kappa_i(s)\I\right)x(s)$, $s\in(t,T)$. Since $X_e\in\spd{n}$ implies $\Phi_i(t,T)X_e \Phi^T_i(t,T)\in\spd{n}$, and $P\in\spd{m}$ implies $\Phi_{i}(t, s)B(s)PB^T(s)\Phi^T_{i}(t,s)\in\spsd{n}$, it follows from \eqref{eq: ellipsoidal approx: over-approx BRS proof 1} that $Q_i(t)\in\spd{n}$ for all $t\in[0,T]$.

\underline{Assertion \emph{(\ref{item: ellipsoidal approx: over-approx BRS assertion 2})}}. Fix any index $i\in\{1,\ldots,n_q\}$ and let $\underline{e_i}\in\cC^1([0,T]\times\R^n;\,\R)$ be defined by
\begin{equation}
    \underline{e_i}(t,x) \doteq \iprod{x-q_i(t)}{Q^{-1}_i(t)(x-q_i(t))} -1,
    \label{eq: ellipsoidal approx: over-approx BRS proof 2}
\end{equation}
where $\{q_i, Q_i\}$ is the solution of \eqref{eq: ellipsoidal approx: over approx BRS dynamics + terminal condition}-\eqref{eq: ellipsoidal approx: over approx BRS forcing matrix}. Following similarly to Theorem \ref{theorem: ellipsoidal approx: under-approx BRS}, substitution of \eqref{eq: ellipsoidal approx: over approx BRS dynamics + terminal condition}-\eqref{eq: ellipsoidal approx: over approx BRS forcing matrix} in \eqref{eq: ellipsoidal approx: under-approx BRS proof 6} yields
\begin{align}
        -\tfrac{\partial}{\partial t}\underline{e_i} + H(t, x, \nabla\underline{e_i}(t,x))  &= 2\sqrt{\iprod{x-q_i}{Q^{-1}_iBPB^TQ^{-1}_i(x-q_i)}} \nn\\
        & - \iprod{x - q_i}{Q^{-1}_i\left(\tfrac{1}{\kappa_i}BPB^T + \kappa_iQ_i\right)Q^{-1}_i(x - q_i)}.
    \label{eq: ellipsoidal approx: over-approx BRS proof 3}
\end{align}
Substitution of $x = Q^\half_i(t)w + q_i(t)$ in \eqref{eq: ellipsoidal approx: over-approx BRS proof 3} yields
\begin{align*}
    &2\sqrt{\iprod{w}{Q^{-\half}_iBPB^TQ^{-\half}_i w}}- \iprod{w}{\left(\tfrac{1}{\kappa_i}Q^{-\half}_iBPB^TQ^{-\half}_i + \kappa_i\I\right) w} \leq 0,
\end{align*}
in which the inequality holds over $\norml{w} > 1$ by Lemma \ref{lemma: ellipsoidal approx: upper bound on norm over transformed unit ball}. Using Remark \ref{remark: ellipsoidal approx: ellipsoid is affine transformation}, $\underline{e_i}$ satisfies the PDI in \eqref{eq: approx reach: HJB PDE over approx BRS collection} for all $i\in\{1,\ldots,n_q\}$. Then, by Lemma \ref{lemma: approx reach: min and max function over supersolutions and subsolutions}, 
\begin{equation*}
    \underline{e}(t,x) \doteq \max_{1 \leq i \leq n_q}\underline{e_i}(t,x),\quad \forall (t,x)\in[0,T]\times\R^n,
\end{equation*}
is a viscosity subsolution of \eqref{eq: approx reach: HJB PDE} with $\Omega = \underline{e}^{-1}((0,\infty))$ and the terminal condition 
$\underline{e}(T,x) = \iprod{x-x_e}{X^{-1}_e(x-x_e)} - 1,$ for all $x\in\R^n$.
Finally, using Theorem \ref{theorem: approx reach: HJB equation for over-approximate BRS}, the zero sublevel set $\{x\in\R^n \,|\, \underline{e}(t,x) \leq 0 \} = \bigcap^{n_q}_{i=1}\mathcal{E}\left(q_i(t), Q_i(t)\right)$ is an over-approximation of $\mathcal{G}(t)$ for all $t\in[0,T]$. 
\end{proof}

Likewise with $S_i$ in \eqref{eq: ellipsoidal approx: under approx BRS dynamics + terminal condition}, the scalar function $\kappa_i$ allows for a family of over-approximating ellipsoids $\mathcal{E}(q_i(t),Q_i(t))$ to be produced. When chosen appropriately, this generalisation allows the boundaries of the over-approximating ellipsoids and $\mathcal{G}(t)$ to coincide along a solution of \eqref{eq: ellipsoidal approx: linear system}, which is analogous to Corollary \ref{corollary: ellipsoidal approx: tight under-approx of BRS}. In particular, the desired selection of $\kappa_i$ is
\begin{equation}
    \kappa_i(s) = \frac{\bnorml{\calQ^{\half}_{i,\star}(s)\hat{w}_i(s)}}{\bnorml{\hat{w}_i(s)}}, 
    \label{eq: ellipsoidal approx: over approx BRS kappa choice}
\end{equation}
for all $s\in[t,T]$ such that $\hat{w}_i(s) \neq 0$, in which $\calQ^{\half}_{i,\star}$ and $\hat{w}_i$ are defined in \eqref{eq: ellipsoidal approx: under approx BRS Q star matrix} and \eqref{eq: ellipsoidal approx: hat w for orthogonal matrix}, respectively, with $x^\star(\cdot)$ being a solution of \eqref{eq: ellipsoidal approx: linear system} satisfying the FVP 
\begin{equation}
\left\{
    \begin{split}
        \dot{x}^\star(s) &= A(s)x^\star(s) + B(s)u^\star(s), \quad x^\star(T) \in \partial \mathcal{X},\\
        u^\star(s) &\in \argmax_{u\in\U}\iprod{-Q^{-\half}_i(s)\hat{w}_i(s)}{B(s)u}.
    \end{split}\right.
    \label{eq: ellipsoidal approx: over approx boundary soln dynamics}
\end{equation}
\begin{corollary} Consider the statements in Theorem \ref{theorem: ellipsoidal approx: over-approx BRS}. Given $\kappa_i\in\cC\left([0,T];\R_{>0}\right)$ with $i\in\{1,\ldots,n_q\}$ satisfying \eqref{eq: ellipsoidal approx: over approx BRS kappa choice} for all $s\in[t,T]$ such that $\hat{w}_i(s) \neq 0$ and $x^\star(\cdot)$ is a solution of \eqref{eq: ellipsoidal approx: linear system} satisfying \eqref{eq: ellipsoidal approx: over approx boundary soln dynamics} a.e. $s\in(t,T)$ with $u^\star \in \cU[t,T]$, then, $x^\star(t) \in \partial \mathcal{E}(q_i(t),Q_i(t)) \cap \partial\mathcal{G}(t)$ for all $t\in[0,T]$.
\label{corollary: ellipsoidal approx: tight over-approx of BRS}
\end{corollary}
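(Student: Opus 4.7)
My plan mirrors that of Corollary~\ref{corollary: ellipsoidal approx: tight under-approx of BRS}, but the reversed containment $\mathcal{G}(t)\subseteq \overline{\mathcal{G}}(t)\subseteq \mathcal{E}(q_i(t),Q_i(t))$ forces a different strategy. One cannot use Pontryagin's maximum principle to establish $x^\star(t)\in\partial \mathcal{G}(t)$ directly, since the quantity $Q_i^{-\half}(s)\hat{w}_i(s)$ appearing in \eqref{eq: ellipsoidal approx: over approx boundary soln dynamics} is not a PMP adjoint in general (its derivative along $x^\star$ picks up an extra $Q_i^{-1}\overline{\calR}_i$ term). Instead, I would show directly that the trajectory stays on the ellipsoidal boundary over all of $[t,T]$, i.e.\ $\underline{e_i}(s,x^\star(s))=0$ for every $s\in[t,T]$, where $\underline{e_i}\in\cC^1([0,T]\times\R^n;\R)$ is the quadratic from the proof of Theorem~\ref{theorem: ellipsoidal approx: over-approx BRS}. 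Once $x^\star(t)\in\partial \mathcal{E}(q_i(t),Q_i(t))$ is established, $x^\star(t)\in\partial \mathcal{G}(t)$ follows by a routine contradiction using the containment of Theorem~\ref{theorem: ellipsoidal approx: over-approx BRS}(b).

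Setting $\psi(s)\doteq\underline{e_i}(s,x^\star(s))$, the core step is to differentiate $\psi$ along the trajectory. Because $\nabla \underline{e_i}(s,x^\star(s))=2Q_i^{-\half}(s)\hat{w}_i(s)$ is a positive scalar multiple of $Q_i^{-\half}(s)\hat{w}_i(s)$, the control $u^\star$ specified by \eqref{eq: ellipsoidal approx: over approx boundary soln dynamics} attains the maximum in the Hamiltonian \eqref{eq: background: HJB Hamiltonian}, yielding
\begin{equation*}
-\tfrac{d\psi}{ds} \,=\, -\tfrac{\partial \underline{e_i}}{\partial s}(s,x^\star(s))+H\bigl(s,x^\star(s),\nabla\underline{e_i}(s,x^\star(s))\bigr).
\end{equation*}
Reusing identity \eqref{eq: ellipsoidal approx: over-approx BRS proof 3} and substituting the prescribed $\kappa_i$ from \eqref{eq: ellipsoidal approx: over approx BRS kappa choice} (on the set where $\hat{w}_i(s)\neq 0$) collapses the right-hand side to
\begin{equation*}
-\tfrac{d\psi}{ds} \,=\, 2\bnorml{\calQ^{\half}_{i,\star}(s)\hat{w}_i(s)}\bigl(1-\norml{\hat{w}_i(s)}\bigr).
\end{equation*}
At points with $\hat{w}_i(s)=0$ this identity still holds, since the precollapsed expression $2\bnorml{\calQ^{\half}_{i,\star}\hat{w}_i}-\tfrac{1}{\kappa_i}\bnorml{\calQ^{\half}_{i,\star}\hat{w}_i}^2-\kappa_i\norml{\hat{w}_i}^2$ vanishes for any $\kappa_i>0$; so the ambiguity of \eqref{eq: ellipsoidal approx: over approx BRS kappa choice} at such points is immaterial.

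Since $x^\star(\cdot)$ remains admissible on $[s,T]$ and reaches $x^\star(T)\in\partial \mathcal{X}\subseteq \mathcal{X}$, Theorem~\ref{theorem: ellipsoidal approx: over-approx BRS}(b) gives $x^\star(s)\in \mathcal{G}(s)\subseteq \mathcal{E}(q_i(s),Q_i(s))$, so $\psi(s)\leq 0$ and hence $\norml{\hat{w}_i(s)}\leq 1$ throughout $[t,T]$. The ODE above then forces $d\psi/ds \leq 0$, i.e.\ $\psi$ is non-increasing in $s$, and the terminal value $\psi(T)=g(x^\star(T))=0$ (from $x^\star(T)\in\partial \mathcal{X}$) yields $\psi(s)\geq 0$ on $[t,T]$. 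Sandwiched with $\psi\leq 0$, this gives $\psi \equiv 0$, whence $x^\star(t)\in\partial \mathcal{E}(q_i(t),Q_i(t))$. If $x^\star(t)\in\text{int}(\mathcal{G}(t))$ then, by Theorem~\ref{theorem: ellipsoidal approx: over-approx BRS}(b) and monotonicity of the interior under inclusion, $x^\star(t)\in\text{int}(\mathcal{E}(q_i(t),Q_i(t)))$, contradicting the above; hence $x^\star(t)\in \partial \mathcal{G}(t)$. The main obstacle is the collapse of $-d\psi/ds$ to $2\bnorml{\calQ^{\half}_{i,\star}\hat{w}_i}(1-\norml{\hat{w}_i})$: one must verify that $u^\star$ from \eqref{eq: ellipsoidal approx: over approx boundary soln dynamics} indeed attains the Hamiltonian's maximum, re-derive \eqref{eq: ellipsoidal approx: over-approx BRS proof 3} along $x^\star$, and confirm that the identity extends continuously across $\{s:\hat{w}_i(s)=0\}$ where \eqref{eq: ellipsoidal approx: over approx BRS kappa choice} is formally indeterminate; once this is in place, the monotonicity-and-sandwich argument closes the proof quickly.
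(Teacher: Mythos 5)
Your proposal is correct and takes essentially the same route as the paper's proof: the identical derivative computation for $\underline{e_i}$ along $x^\star(\cdot)$ (with $u^\star$ attaining the Hamiltonian maximum and the prescribed $\kappa_i$ collapsing the right-hand side), the terminal condition $\underline{e_i}(T,x^\star(T))=0$, and the same final containment/interior argument to place $x^\star(t)$ on $\partial\mathcal{E}(q_i(t),Q_i(t))\cap\partial\mathcal{G}(t)$. The only (harmless) organizational difference is that you invoke Theorem \ref{theorem: ellipsoidal approx: over-approx BRS} at every intermediate time to get $\underline{e_i}(s,x^\star(s))\leq 0$ and then sandwich to $\equiv 0$, whereas the paper establishes the derivative sign only on the set where $\underline{e_i}\leq 0$ and rules out $\underline{e_i}<0$ by a first-crossing contradiction, using the containment only at the final time $t$.
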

\begin{proof}
Fix any $t\in[0,T]$ and let $i\in\{1,\ldots,n_q\}$ be any index for which $\kappa_i\in\cC\left([0,T];\,\R_{>0}\right)$ satisfies \eqref{eq: ellipsoidal approx: over approx BRS kappa choice} with $x^\star(\cdot)$ satisfying \eqref{eq: ellipsoidal approx: over approx boundary soln dynamics}. Consider $\underline{e_i}\in\cC^1\left([t,T]\times\R^n;\,\R\right)$ as defined in \eqref{eq: ellipsoidal approx: over-approx BRS proof 2} over the restricted domain $[t,T]\times\R^n$. Differentiation of $\underline{e_i}$ along $x^\star(\cdot)$ yields a.e. $s \in [t,T]$,
 \begin{align}
      &-\tfrac{d}{ds}\underline{e_i}(s, x^\star(s)) = -\tfrac{\partial}{\partial s}\underline{e_i}(s, x^\star(s))  +\iprod{-\grad \underline{e_i}(s,x^\star(s))}{A(s)x^\star(s)+B(s)u^\star(s)}.
      \label{eq: ellipsoidal approx: tight over-approx of BRS proof 2}
 \end{align}
 Since $\grad \underline{e_i}(s, x^\star(s)) = 2Q^{-1}_i(s)(x^\star(s) - q_i(s))$, from \eqref{eq: ellipsoidal approx: over approx boundary soln dynamics} and \eqref{eq: ellipsoidal approx: hat w for orthogonal matrix}, $u^\star(s)$ is a maximiser of the right-most term in \eqref{eq: ellipsoidal approx: tight over-approx of BRS proof 2} with respect to $u$. Continuing from \eqref{eq: ellipsoidal approx: tight over-approx of BRS proof 2}
 \begin{align}
      &-\tfrac{d}{ds}\underline{e_i}(s, x^\star(s)) = -\tfrac{\partial}{\partial s}\underline{e_i}(s, x^\star(s)) + \max_{u\in\U}\iprod{-\grad \underline{e_i}(s,x^\star(s))}{A(s)x^\star(s)+B(s)u}.
      \label{eq: ellipsoidal approx: tight over-approx of BRS proof 3}
 \end{align}
From \eqref{eq: ellipsoidal approx: over-approx BRS proof 3} and \eqref{eq: ellipsoidal approx: hat w for orthogonal matrix},
\begin{align}
    -&\tfrac{d}{ds}\underline{e_i}(s, x^\star(s))  =2\bnorml{\calQ^{\half}_{i,\star}\hat{w}_i} 
    - \iprod{\hat{w}_i}{Q\left(\tfrac{1}{\kappa_i}\calQ_{i,\star}+\kappa_i\I\right)\hat{w}_i}.
    \label{eq: ellipsoidal approx: tight over-approx of BRS proof 4}
\end{align}

The right-hand side of \eqref{eq: ellipsoidal approx: tight over-approx of BRS proof 4} is non-negative whenever $\bnorml{\hat{w}_i} \leq 1$. To see this, consider the cases: \emph{i)} $0<\norml{\hat{w}_i}\leq 1$ and \emph{ii)} $\norml{\hat{w}_i}=0$. Beginning with case \emph{i)}, 
\begin{align*}
    &2\bnorml{\calQ^{\half}_{i,\star}\hat{w}_i} - \iprod{\hat{w}_i}{\left(\tfrac{1}{\kappa_i}\calQ_{i,\star}+\kappa_i\I\right)\hat{w}_i}  = \norml{\hat{w}_i}\Big(2\frac{\bnorml{\calQ^{\half}_{i,\star}\hat{w}_i}}{\norml{\hat{w}_i}}- \iprod{\frac{\hat{w}_i}{\norml{\hat{w}_i}}}{\left(\tfrac{1}{\kappa_i}\calQ_{i,\star}+\kappa_i\I\right) \hat{w}_i}\Big).
\end{align*} 
As $(\tfrac{1}{\kappa_i}\calQ_{i,\star}+\kappa_i\I)\succ 0$ and noting that $\bnorml{\hat{w}_i} \leq 1$,
\begin{align*}
    &2\bnorml{\calQ^{\half}_{i,\star}\hat{w}_i} - \iprod{\hat{w}_i}{\left(\tfrac{1}{\kappa_i}\calQ_{i,\star}+\kappa_i\I\right)\hat{w}_i} \geq \norml{\hat{w}_i}\Big(2\frac{\bnorml{\calQ^{\half}_{i,\star}\hat{w}_i}}{\norml{\hat{w}_i}}- \iprod{\frac{\hat{w}_i}{\norml{\hat{w}_i}}}{\left(\tfrac{1}{\kappa_i}\calQ_{i,\star}+\kappa_i\I\right) \frac{\hat{w}_i}{\norml{\hat{w}_i}}}\Big).
\end{align*} 
Next, using $\hat{w}_i\slash \norml{\hat{w}_i}$ in place of $w^\star$ in Lemma \ref{lemma: ellipsoidal approx: upper bound on norm over transformed unit ball}, the choice of $\kappa_i$ in \eqref{eq: ellipsoidal approx: over approx BRS kappa choice} results in
\begin{align}
    &2\frac{\bnorml{\calQ^{\half}_{i,\star}\hat{w}_i}}{\norml{\hat{w}_i}}- \iprod{\frac{\hat{w}_i}{\norml{\hat{w}_i}}}{\left(\tfrac{1}{\kappa_i}\calQ_{i,\star}+\kappa_i\I\right) \frac{\hat{w}_i}{\norml{\hat{w}_i}}} = 0 \nn \\
    &\implies  2\bnorml{Q^{\half}_{i,\star}\hat{w}_i} - \iprod{\hat{w}_i}{\left(\tfrac{1}{\kappa_i}Q_{i,\star}+\kappa_i\I\right)\hat{w}_i} \geq 0. \label{eq: ellipsoidal approx: tight over-approx of BRS proof 5}
\end{align}

Now consider case \emph{ii)} where $\norml{\hat{w}_i} = 0$, which implies that $\hat{w}_i = 0$ so \eqref{eq: ellipsoidal approx: tight over-approx of BRS proof 5} holds with equality. Since $\norml{\hat{w}_i} \leq 1 $ implies that $\underline{e_i}(s, x^\star(s)) \leq 0$, from \eqref{eq: ellipsoidal approx: tight over-approx of BRS proof 4} and \eqref{eq: ellipsoidal approx: tight over-approx of BRS proof 5},
\begin{equation}
    -\tfrac{d}{ds}\underline{e_i}(s, x^\star(s)) \geq 0, \quad  \forall s \in \left\{s\in[t,T]\,|\,\underline{e_i}(s, x^\star(s)) \leq 0\right\},
    \label{eq: ellipsoidal approx: tight over-approx of BRS proof 6}
\end{equation}
which implies that $\underline{e_i}(s,x^\star(s)) \geq 0$ for all $s\in[t,T]$. To see this, suppose that the contrary is true, that is, $\underline{e_i}(s_1,x^\star(s_1)) < 0$ for some $s_1\in[t,T]$. Note that $\underline{e_i}(T, x^\star(T)) = g(x^\star(T))  = 0$, since $x^\star(T) \in \partial \mathcal{X}$ implies $\norml{Q^{-\half}_i(x^\star(T) -x_e)} = 1$. Thus, there exists a time $s_2\in(s_1,T]$ such that $\underline{e_i}(s_2, x^\star(s_2)) \geq 0$. Let $s_2$ be the first such time, i.e., $s_2 \doteq \min\{s \in (s_1,T]\,|\,\underline{e_i}(s,x^\star(s)) \geq 0\}$. Then, for all $s \in [s_1, s_2]$, $\underline{e_i}(s, x^\star(s)) \leq 0$. From \eqref{eq: ellipsoidal approx: tight over-approx of BRS proof 6},
\begin{align*}
    0 &= \underline{e_i}(s_2,x^\star(s_2)) = \underline{e_i}(s_1,x^\star(s_1)) + \integral{s_1}{s_2}{\tfrac{d}{ds}\underline{e_i}(s,x^\star(s))}{s}\leq \underline{e_i}(s_1,x^\star(s_1)),
\end{align*}
which yields the desired contradiction. Thus, $\underline{e_i}(s,x^\star(s)) \geq 0$ for all $s\in [t,T]$, which implies that $x^\star(t) \notin \{x\in\R^n\,|\,\underline{e_i}(t,x) < 0\} = \text{int}\left(\mathcal{E}(q_i(t), Q_i(t))\right)$. 

Finally, since $s\mapsto x^\star(s)$ is a solution of \eqref{eq: ellipsoidal approx: linear system} with $x^\star(T) \in \mathcal{X}$, by Definition \ref{def: background: BRS}, $x^\star(t)\in\mathcal{G}(t)$. From Theorem \ref{theorem: ellipsoidal approx: over-approx BRS}, $\mathcal{G}(t)\subseteq \mathcal{E}(q_i(t),Q_i(t))$, so $x^\star(t) \in \mathcal{E}(q_i(t),Q_i(t))$. Since $x^\star(t) \notin \text{int}\left(\mathcal{E}(q_i(t), Q_i(t))\right)$, then, $x^\star(t)\in\partial \mathcal{E}(q_i(t), Q_i(t))$. Furthermore, $x^\star(t)$ can not be an interior point of $\mathcal{G}(t)$ otherwise it would also be an interior point of $\mathcal{E}(q_i(t), Q_i(t))$. Thus, $x^\star(t) \in \partial \mathcal{G}(t) \cap \partial \mathcal{E}(q_i(t), Q_i(t))$ for all $t\in[0,T]$. 
\end{proof}

\begin{remark}
Theorems \ref{theorem: ellipsoidal approx: under-approx BRS} and \ref{theorem: ellipsoidal approx: over-approx BRS} can still be applicable for LTV systems where the input set $\U$ and the terminal set $\mathcal{X}$ are not ellipsoidal. In particular, if these sets are under-approximated by ellipsoids $\mathcal{E}(p,P)\subseteq \U$ and $\mathcal{E}(x_e, X_e)\subseteq \mathcal{X}$, which enter the FVP in \eqref{eq: ellipsoidal approx: under approx BRS dynamics + terminal condition}-\eqref{eq: ellipsoidal approx: under approx BRS Q star matrix}, then, assertion (a) of Theorem \ref{theorem: ellipsoidal approx: under-approx BRS} holds. Analogously, if $\U$ and $\mathcal{X}$ are over-approximated by ellipsoids, then, all assertions of Theorem \ref{theorem: ellipsoidal approx: over-approx BRS} hold.
    \label{remark: ellipsoidal approx: generalising input and terminal set}
\end{remark}
\begin{remark}
Lemma \ref{lemma: background: FRS from BRS} can be used to produce analogous results to this section for under- and over-approximating the forwards reachable set $\mathcal{F}(t)$ of \eqref{eq: ellipsoidal approx: linear system} subject to the initial condition $x(t) \in \mathcal{X} = \mathcal{E}(x_e, X_e)$ (see Definition \ref{def: background: FRS}). This would require that \eqref{eq: ellipsoidal approx: under approx BRS dynamics + terminal condition} and \eqref{eq: ellipsoidal approx: over approx BRS dynamics + terminal condition} be solved as initial value problems (IVP) with $q_i(t) = x_e$ and $Q_i(t) = X_e$ and the signs on $\underline{\calR_i}$ and $\overline{\calR}_i$ reversed to `$+\underline{\calR_i}$' and `$+\overline{\calR}_i$'. The resulting union of ellipsoids $\cup^{n_q}_{i=1}\mathcal{E}(q_i(T),Q_i(T))$ from the corresponding IVP of \eqref{eq: ellipsoidal approx: under approx BRS dynamics + terminal condition} and the intersection of ellipsoids $\cap^{n_q}_{i=1}\mathcal{E}(q_i(T),Q_i(T))$ from the corresponding IVP of \eqref{eq: ellipsoidal approx: over approx BRS dynamics + terminal condition} would be an under- and over-approximation of $\mathcal{F}(t)$, respectively. The equivalent tight ellipsoidal approximations of Corollaries \ref{corollary: ellipsoidal approx: tight under-approx of BRS} and \ref{corollary: ellipsoidal approx: tight over-approx of BRS} for $\mathcal{F}(t)$ would require that the solution $x^\star(\cdot)$ be obtained via conversion of \eqref{eq: ellipsoidal approx: PMP backwards dynamics for LTV system} and \eqref{eq: ellipsoidal approx: over approx boundary soln dynamics} to initial value problems.
\label{remark: ellipsoidal approx: under and over approx for FRS}
\end{remark}

\subsection{Numerical Implementation}
Corollaries \ref{corollary: ellipsoidal approx: tight under-approx of BRS} and \ref{corollary: ellipsoidal approx: tight over-approx of BRS} can be used to generate, respectively, a union of under-approximating ellipsoids and an intersection of over-approximating ellipsoids for \eqref{eq: ellipsoidal approx: linear system} whose boundaries coincide with $\mathcal{G}(t)$. A possible algorithmic procedure for doing so is summarised in Algorithms \ref{alg: tight under-approximating union} and \ref{alg: tight over-approximating intersection}, which, ignoring integration errors, produces a collection of under-approximating and over-approximating ellipsoids $\mathcal{E}(q_i, Q_i)$ and a corresponding set of boundary points $x^\star_i$ at time $t$.

\begin{algorithm}[t!]
 \caption{Tight Under-approximating Ellipsoids}
 \label{alg: tight under-approximating union}
\begin{algorithmic}[1]
  \STATE \textbf{Input:} $\{t, T\}$, $\Delta$, $\{A(\cdot), B(\cdot)\}$, $\mathcal{E}(p,P)$, $\mathcal{E}(x_e,X_e)$, $n_q$, and $Q_{min}$
  \STATE \textbf{Output:} $\{q_i, Q_i\}^{n_q}_{i=1}$ and $\{x^\star_i\}^{n_q}_{i=1}$
  \FORALL{$i \in \{1,\ldots,n_q\}$}
  \STATE $(q_i, Q_i) \leftarrow (x_e, X_e)$
  \STATE $x^\star_i \leftarrow Q^{\half}_i w_i + q_i$  \COMMENT{take any $w_i\in\R^n$ with unit norm}
  \STATE $\lambda_i \leftarrow 2X^{-1}_e(x^\star_i - x_e)$
  \ENDFOR
  \STATE $s \leftarrow T$
  \WHILE{$s > t$}
    \STATE $\Bar{A} \leftarrow A(s)$, $\Bar{B} \leftarrow B(s)$
    \FORALL{$i \in \{1,\ldots,n_q\}$}
    \STATE $(\hat{w}_i, \calQ_{i,\star})\leftarrow \left(Q^{-\half}_i(x^\star_i - q_i), \,Q^{-\half}_i\Bar{B}P\Bar{B}^TQ^{-\half}_i\right)$
    \STATE $u^\star_i \leftarrow \argmax_{u\in\mathcal{E}(p,P)}\iprod{-\Bar{B}^T\lambda_i}{u}$ \\ 
    \COMMENT{use \eqref{eq: ellipsoidal approx: maximising control for boundary solutions} with $l = \Bar{B}^T\lambda_i$}
    \IF{$\calQ^\half_{i,\star}\hat{w}_i = 0$ or $\underline{\Lambda}(Q_i) < Q_{min}$}
      \STATE $S_i \leftarrow \I$
    \ELSE
      \STATE $S_i \leftarrow$ solution of \eqref{eq: ellipsoidal approx: under approx BRS orthogonal matrix}  \COMMENT{use Remark \ref{remark: ellipsoidal approx: rodrigues rotation formula}}
    \ENDIF
    \STATE $(q_i, Q_i) \leftarrow  \texttt{getNextqQ}(\Delta, q_i, Q_i, \Bar{A}, \Bar{B}, S_i)$ 
    \STATE $(x^\star_i, \lambda_i)\leftarrow \texttt{getNextState}(\Delta, x^\star_i, 
    \lambda_i, \Bar{A}, \Bar{B}, u^\star_i)$\\
    \ENDFOR
        \STATE $s \leftarrow s - \Delta$
  \ENDWHILE
\end{algorithmic}
\end{algorithm}

\begin{algorithm}[t!]
 \caption{Tight Over-approximating Ellipsoids}
 \label{alg: tight over-approximating intersection}
\begin{algorithmic}[1]
  \STATE \textbf{Input:} $\{t, T\}$, $\Delta$, $\{A(\cdot), B(\cdot)\}$, $\mathcal{E}(p,P)$, $\mathcal{E}(x_e,X_e)$, $n_q$, and $\kappa_{min}$
  \STATE \textbf{Output:} $\{q_i, Q_i\}^{n_q}_{i=1}$ and $\{x^\star_i\}^{n_q}_{i=1}$
  \FORALL{$i \in \{1,\ldots,n_q\}$}
  \STATE $(q_i, Q_i) \leftarrow (x_e, X_e)$
  \STATE $x^\star_i \leftarrow Q^{\half}_i w_i + q_i$  {\COMMENT{take any $w_i\in\R^n$ with unit norm}}
  \ENDFOR
  \STATE $s \leftarrow T$
  \WHILE{$s > t$}
    \STATE $\Bar{A} \leftarrow A(s)$, $\Bar{B} \leftarrow B(s)$
    \FORALL{$i \in \{1,\ldots,n_q\}$}
    \STATE $(\hat{w}_i, \calQ_{i,\star})\leftarrow \left(Q^{-\half}_i(x^\star_i - q_i), \,Q^{-\half}_i\Bar{B}P\Bar{B}^TQ^{-\half}_i\right)$
    \STATE $u^\star_i \leftarrow \argmax_{u\in\mathcal{E}(p,P)}\iprod{-\Bar{B}^TQ^{-\half}_i\hat{w}_i}{u}$ \\ \COMMENT{use \eqref{eq: ellipsoidal approx: maximising control for boundary solutions} with $l = \Bar{B}^TQ^{-\half}_i\hat{w}_i$}
    \IF{$\calQ^\half_{i,\star}\hat{w}_i = 0$}
      \STATE $\kappa_i \leftarrow \kappa_{min}$
    \ELSE
      \STATE $\kappa_i \leftarrow \bnorml{\calQ^{\half}_{i,\star}\hat{w}_i}\slash \bnorml{\hat{w}_i}$ 
    \ENDIF
    \STATE $(q_i, Q_i) \leftarrow  \texttt{getNextqQ}(\Delta, q_i, Q_i, \Bar{A}, \Bar{B}, \kappa_i)$
    \STATE $x^\star_i \leftarrow \texttt{getNextState}(\Delta, x^\star_i, \Bar{A}, \Bar{B}, u^\star_i)$
    \ENDFOR
        \STATE $s \leftarrow s - \Delta$
  \ENDWHILE
\end{algorithmic}
\end{algorithm}

The `getNextqQ' and `getNextState' functions in steps 19-20 of Algorithm \ref{alg: tight under-approximating union} and steps 18-19 of Algorithm \ref{alg: tight over-approximating intersection} refer to any integration scheme, e.g., a Runge-Kutta scheme, that integrates, respectively, \eqref{eq: ellipsoidal approx: under approx BRS dynamics + terminal condition}, \eqref{eq: ellipsoidal approx: PMP backwards dynamics for LTV system}, \eqref{eq: ellipsoidal approx: over approx BRS dynamics + terminal condition}, and \eqref{eq: ellipsoidal approx: over approx boundary soln dynamics} backwards in time over a time span $\Delta \ll T-t$ with all time-varying elements held constant. To compute the maximising input in \eqref{eq: ellipsoidal approx: PMP backwards dynamics for LTV system} and \eqref{eq: ellipsoidal approx: over approx boundary soln dynamics}, one can use the fact that (e.g., \cite[Lemma 10]{LMD23})
\begin{equation}
    u^\star = \begin{cases}
        p - \frac{Pl}{\norml{P^{\half}l}},  & l \neq 0, \\ p, &l = 0,
    \end{cases} 
    \label{eq: ellipsoidal approx: maximising control for boundary solutions}
\end{equation}
is a maximiser of $\iprod{-l}{u}$ on $\mathcal{E}(p,P)$ for any $l\in\R^m$.

In Algorithm \ref{alg: tight under-approximating union}, we select $S_i = \I$ when the squared length of the shortest axis of $\mathcal{E}(q_i,Q_i)$ is below some tolerance $Q_{min}>0$, which prevents the ellipsoid $\mathcal{E}(q_i, Q_i)$ from becoming degenerate (see Remark \ref{remark: ellipsoidal approx: positive definiteness of ellipsoid shape}). Similarly, to ensure that $\kappa_i > 0$ in  Algorithm \ref{alg: tight over-approximating intersection}, we select $\kappa_i = \kappa_{min} >0$ in the case where $ \calQ^{\half}_{i,\star}\hat{w}_i= 0$. Note that this subsumes the case where $\hat{w}_i = 0$. If these tolerances are not met at any time $s\in(t,T)$, then the boundaries of $\mathcal{E}(q_i, Q_i)$ and the backwards reachable set will coincide (to the tolerance level of the chosen integration scheme) at the points $x^\star_i$.
\section{Numerical Example}
\label{sec: numerical examples}
Let us now illustrate how the results of Section \ref{sec: LTV reachability} can be used to generate under- and over-approximations of the backwards reachable set for \eqref{eq: ellipsoidal approx: linear system} using a numerical example. We consider a forced parametric oscillator whose dynamics can be described by the LTV state-space model
\begin{align}
    &\Dot{x}(s) = \begin{bmatrix}0&1\\-\omega^2(s)&0\end{bmatrix}x(s) + \begin{bmatrix}0 \\ 1\end{bmatrix}u(s),\quad \text{a.e. }s\in(0,1.5), \nn\\
    &x(1.5) \in \mathcal{X}, 
\label{eq: numerical ex: forced oscillator system description}
\end{align}
where $\omega(s) = \sqrt{4 + 2 \cos{(2s)}}$ is the frequency, $x(s)=[x_1(s), x_2(s)]^T\in\mathbb{R}^2$ is the state, and $u(s)\in\mathbb{U}\subset\mathbb{R}$ is the input. We take the input set $\mathbb{U}$ and the terminal set $\mathcal{X}$ to be
\begin{equation}
    \mathbb{U} \doteq \mathcal{E}\left(0,1\right), \quad  \mathcal{X} \doteq \mathcal{E}\left(0,0.1^2 \I\right).
    \label{eq: numerical ex: forced oscillator ellipsoidal sets}
\end{equation}

For comparison purposes, the `exact' backwards reachable set $\mathcal{G}(t)$ of \eqref{eq: numerical ex: forced oscillator system description} was numerically computed using the grid-based, finite-difference scheme provided by \cite{mitchelltoolbox}. A grid of $250\times 250$ points was used to compute $\mathcal{G}(t)$ and is included in the dashed black lines in Fig. \ref{fig: numerical ex: under approx BRS} and Fig. \ref{fig: numerical ex: over approx BRS}.

\begin{figure}[ht!]
    \centering
    \includegraphics[width=0.7\columnwidth]{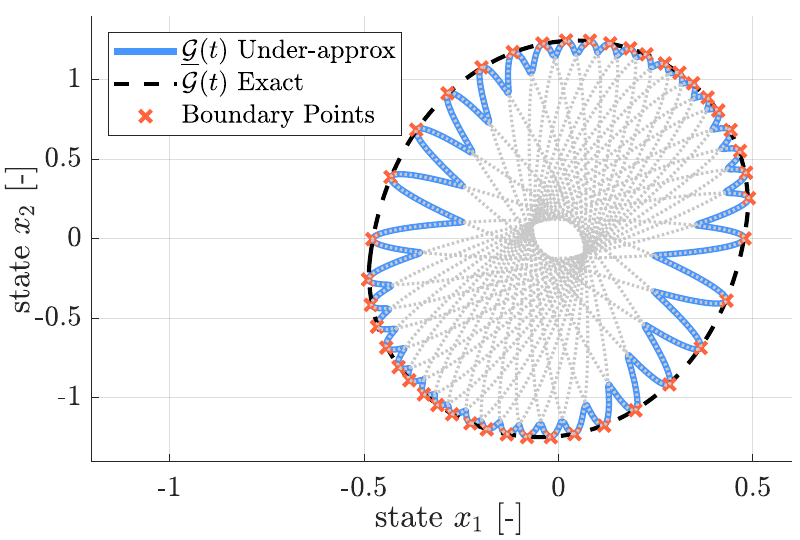}
    \caption{Comparison of the union of ellipsoids $\underline{\mathcal{G}}(t)$ and the backwards reachable set $\mathcal{G}(t)$ for \eqref{eq: numerical ex: forced oscillator system description} at time $t = 0$. Grey dotted lines give the outline of each individual ellipsoid in $\underline{\mathcal{G}}(t)$. Animation: \texttt{https://youtu.be/XNGjqlYKxnU}.}
    \label{fig: numerical ex: under approx BRS}
\end{figure}
 \begin{figure}[ht!]
    \centering
    \includegraphics[width=0.7\columnwidth]{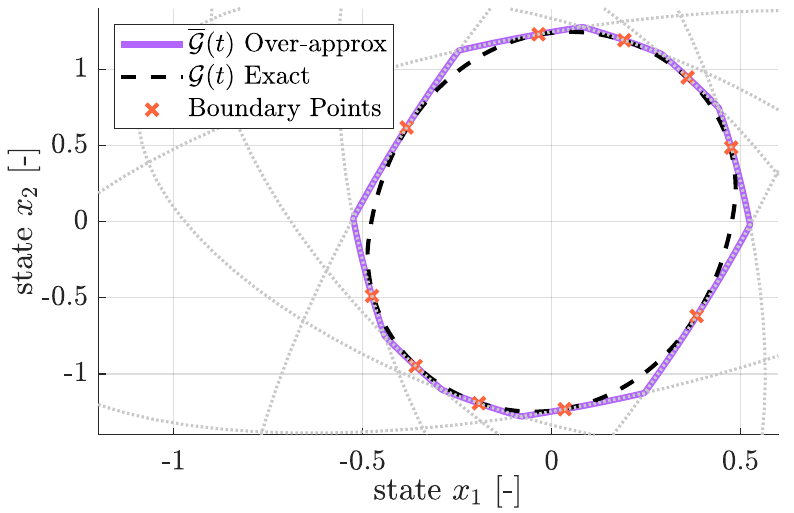}
    \caption{Comparison of the intersection of ellipsoids $\overline{\mathcal{G}}(t)$ and the backwards reachable set $\mathcal{G}(t)$ for \eqref{eq: numerical ex: forced oscillator system description} at time $t = 0$. Grey dotted lines give the outline of each individual ellipsoid in $\overline{\mathcal{G}}(t)$. Animation: \texttt{https://youtu.be/-0sMzyfY2GY}.}
    \label{fig: numerical ex: over approx BRS}
\end{figure}

A union of $n_q = 21$ under-approximating ellipsoids $\underline{\mathcal{G}}(t) \doteq \cup^{n_q}_{i=1}\mathcal{E}(q_i, Q_i)$ for \eqref{eq: numerical ex: forced oscillator system description} was produced using Algorithm \ref{alg: tight under-approximating union}. A time step of $\Delta = 0.01$s was selected with a minimum squared ellipsoid length of $Q_{min} = 10^{-4}$. The initial set of points $\{x^\star_i\}^{n_q}_{i=1}$ was uniformly distributed on the boundary of $\mathcal{X}$. The boundary for $\underline{\mathcal{G}}(t)$ at time $t = 0$ is presented in Fig. \ref{fig: numerical ex: under approx BRS} alongside the set of points $\{x^\star_i\}^{n_q}_{i=1}$ which lie on the boundary of each ellipsoid $\mathcal{E}(q_i, Q_i)$. Due to symmetry of ellipsoids about their centre, if $x^\star_i \in \partial \mathcal{E}(q_i,Q_i)$ then $2q_i - x^\star_i \in \partial\mathcal{E}(q_i,Q_i)$, so these points have also been included.

Using Algorithm \ref{alg: tight over-approximating intersection}, an intersection of $n_q = 5$ over-approximating ellipsoids $\overline{\mathcal{G}}(t) \doteq \cap^{n_q}_{i=1}\mathcal{E}(q_i, Q_i)$ for \eqref{eq: numerical ex: forced oscillator system description} was also produced with the same time step. A lower bound of $\kappa_{min} = 10^{-4}$ was selected with the initial set of points $\{x^\star_i\}^{n_q}_{i=1}$ also uniformly distributed on $\partial\mathcal{X}$. The boundary of $\overline{\mathcal{G}}(t)$ at time $t=0$ is displayed in Fig. \ref{fig: numerical ex: over approx BRS}. 

As guaranteed by Corollaries \ref{corollary: ellipsoidal approx: tight under-approx of BRS} and \ref{corollary: ellipsoidal approx: tight over-approx of BRS}, the collection of boundary points $\{x^\star_i\}^{n_q}_{i=1}$ for both the under- and over-approximating collection of ellipsoids coincide with the boundary of $\mathcal{G}(t)$, yielding tight under- and over-approximations of the backwards reachable set. Note that in this example and in other tests, we have found that the condition $\underline{\Lambda}(Q_i) < Q_{min}$ in step 14 of Algorithm \ref{alg: tight under-approximating union} is never met and the condition $\calQ^\half_{i,\star}\hat{w}_i= 0$ in step 13 of Algorithm \ref{alg: tight over-approximating intersection} is not met so long as $x^\star_i$ is initialised with $\calQ^{\half}_{i,\star}\hat{w}_i \neq 0$. Further analysis is required to state conditions for which we can expect such a scenario to occur.

Using a fourteen-core Intel{\textregistered} Core\textsuperscript{\texttrademark} i7-13700H CPU, computation times in \texttt{MATLAB} for generating the sets $\mathcal{G}(t)$, $\underline{\mathcal{G}}(t)$, and $\overline{\mathcal{G}}(t)$ are displayed in Table \ref{tab: numerical ex: computation times}. Their absolute and relative internal areas, which have been computed via the `trapz' routine in \texttt{MATLAB}, are also displayed. We observe that Algorithms \ref{alg: tight under-approximating union} and \ref{alg: tight over-approximating intersection} offer substantial computational savings over grid-based approaches for approximating the reachable set of \eqref{eq: numerical ex: forced oscillator system description} whilst capturing a comparable area. 

To investigate the scalability of Algorithms \ref{alg: tight under-approximating union} and \ref{alg: tight over-approximating intersection} with respect to the number of states, consider an $n$-state, parameterised LTV system described by
\begin{align}
        &\Dot{x}_1(s) = -(0.2\cos{(\gamma_1 s)} + 0.5)x_1(s) + \eta_1 x_2(s),\nn\\
        &\hspace{3.8cm} \vdots \nn\\
        &\Dot{x}_{n-1}(s) = -(0.2\cos{(\gamma_{n-1} s)} + 0.5)x_{n-1}(s) + \eta_{n-1} x_n(s), \nn\\
        &\Dot{x}_n(s) = u(s),
    \label{eq: numerical ex: timing analysis system description}
\end{align}
a.e. $s\in(0,2)$, where the parameters $\gamma_i, \eta_i \in \R$ are randomly selected from a uniform distribution over $[-0.5,\,0.5]$ for all $i\in\{1,\ldots,n-1\}$. Additionally, the input and terminal set for \eqref{eq: numerical ex: timing analysis system description} is selected to be
\begin{equation}
    \mathbb{U} \doteq \mathcal{E}\left(0,1\right)\subset\R , \quad  \mathcal{X} \doteq \mathcal{E}\left(0,\I\right)\subset \R^n.
    \label{eq: numerical ex: timing analysis ellipsoidal sets}
\end{equation}

\renewcommand{\arraystretch}{1.5}
\begin{table}[!ht]
\begin{center}
\caption{Comparison of computation times and internal areas (at $t=0$) for $\mathcal{G}(t)$, $\underline{\mathcal{G}}(t)$, and $\overline{\mathcal{G}}(t)$.}
\label{tab: numerical ex: computation times}
\begin{tabular}{|c|c|c|} \hline
\cellcolor{lightgray} \textbf{Numerical method} & \cellcolor{lightgray} \textbf{Computation time} [s]  & \cellcolor{lightgray} \textbf{Area (rel. to $\mathcal{G}(t)$)} \\ \hline \hline
Grid-based \cite{mitchelltoolbox} $\mathcal{G}(t)$& 19.17 & 1.880 (100.0\%)\\ \hline
Algorithm \ref{alg: tight under-approximating union} $\underline{\mathcal{G}}(t)$ & $0.1875$& 1.694 (90.09\%)\\ \hline
Algorithm \ref{alg: tight over-approximating intersection} $\overline{\mathcal{G}}(t)$ & $0.09375$& 1.974 (105.0\%)\\ \hline
\end{tabular}
\end{center}
\end{table}
\renewcommand{\arraystretch}{1}
The average computation time over 30 trials for generating $\underline{\mathcal{G}}(t)$ and $\overline{\mathcal{G}}(t)$ for \eqref{eq: numerical ex: timing analysis system description}-\eqref{eq: numerical ex: timing analysis ellipsoidal sets} is displayed on a log scale in Fig. \ref{fig: numerical ex: timing comparison} for $n = 2$ to $50$ states. A fixed number of ellipsoids $n_q = 20$ and a time step of $\Delta = 0.01$s was used for both Algorithms \ref{alg: tight under-approximating union} and \ref{alg: tight over-approximating intersection}. Also displayed are the computation times for the exact reachable set $\mathcal{G}(t)$ using 80 grid points per dimension from $n = 2$ to $4$ states. The computation time of Algorithms \ref{alg: tight under-approximating union} and \ref{alg: tight over-approximating intersection} is only explicitly tied to the number of states through the integration of a matrix differential equation, which has a computational complexity of $\mathcal{O}(n^3)$. This sub-exponential scaling is consistent with Fig. \ref{fig: numerical ex: timing comparison} where the $ n = 50$ case takes under 5 seconds to compute for both $\underline{\mathcal{G}}(t)$ and $\overline{\mathcal{G}}(t)$.  
\begin{figure}[h!]
    \centering
\includegraphics[width=0.65\columnwidth]{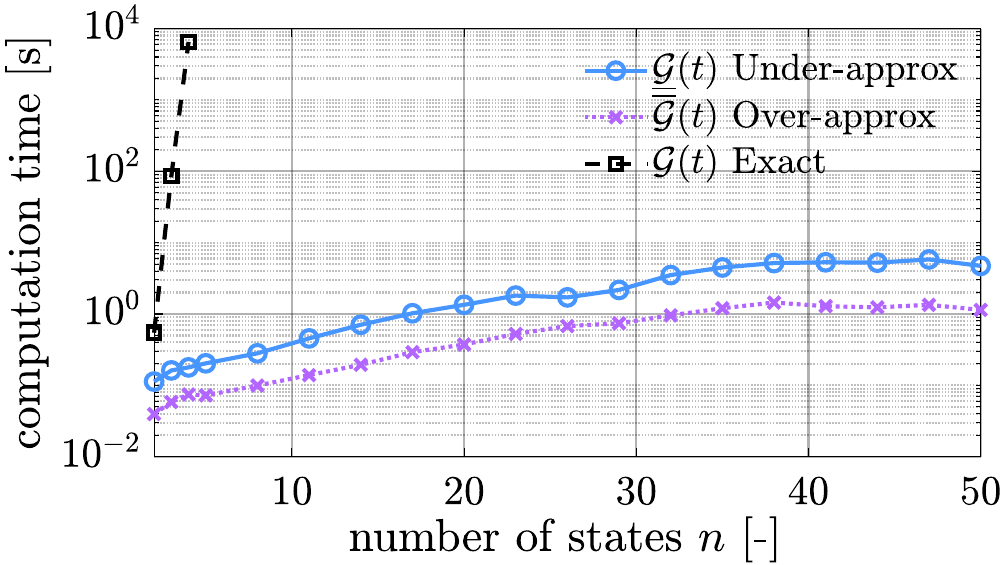}
    \caption{Comparison of computation times for the sets $\underline{\mathcal{G}}(t)$, $\overline{\mathcal{G}}(t)$, and $\mathcal{G}(t)$ as the number of states $n$ increases.}
    \label{fig: numerical ex: timing comparison}
\end{figure}

The flexibility in deciding how many ellipsoids are used in generating the under- and over-approximating sets $\underline{\mathcal{G}}(t)$ and $\overline{\mathcal{G}}(t)$ enables a trade-off to be made between the computational expense and the conservatism of the sets. Although more ellipsoids are likely required to obtain a similar degree of conservatism to Fig. \ref{fig: numerical ex: under approx BRS} and Fig. \ref{fig: numerical ex: over approx BRS} as $n$ increases, the guarantees provided in the results of Sections \ref{sec: approximate reachability} and \ref{sec: LTV reachability} will hold regardless of the number of ellipsoids chosen. In particular, states in $\underline{\mathcal{G}}(t)$ at any time $t\in[0,T]$ retain the guarantee that a constraint admissible control leading to the terminal set $\mathcal{X}$ exists. Analogously, states outside of $\overline{\mathcal{G}}(t)$ will always possess a certificate of safety, guaranteeing that the system will avoid the potentially dangerous terminal set. Approximation schemes, such as those offered here, enable reachability-based control strategies to be employed in applications where such strategies would traditionally be deemed intractable.
\section{Conclusion}
\label{sec: conclusion}
In this paper, we have demonstrated that viscosity supersolutions and subsolutions of a Hamilton-Jacobi-Bellman equation defined over a local domain can be used to generate under- and over-approximating reachable sets for time-varying nonlinear systems. This observation was then used to develop numerical schemes for approximating either the backwards or forwards reachable set of linear time-varying (LTV) systems. 

A union and intersection of ellipsoids, which can be characterised by the minimum and maximum over a collection of quadratic functions, was proposed to under- and over-approximate the reachable set, respectively. The under-approximating ellipsoids admit a generalisation that allows for any solution of the LTV system to be contained within it. This in turn implies that it is possible to have the boundaries of the under-approximating ellipsoids and the exact reachable set coincide along a solution of the system. The over-approximating ellipsoids also admit an analogous generalisation, allowing for a tight over-approximation of the reachable set. The numerical example presented in this paper highlights a marked computational advantage afforded to LTV systems compared to grid-based approaches. Moreover, by utilising a set of conditions that are applicable to nonlinear systems and a general class of sets, the work presented here acts as a demonstration for how approximation schemes may be developed for other system classes and set representations. 
\section*{Appendix}
\noindent \textbf{Proof of Lemma \ref{lemma: approx reach: closure and interior of sublevel sets}: }
     First consider the statement in \eqref{eq: approx reach: closure of strict sublevel set}. Let $\mathscr{F}^z_{0} \doteq z^{-1}(\{0\})$, $\mathscr{F}^z_{<0}\doteq z^{-1}((-\infty,0))$, and $\mathscr{F}^z_{\leq0} \doteq z^{-1}((-\infty, 0])$. Fix any $\Bar{x}\in\mathscr{F}^z_{0}$. As $z\in\cZ$, there exists a sequence of points $\{x_k\}^\infty_{k=1}$ such that for each $k\in\N$, $x_k \in \B_{r_k}(\Bar{x})$ with $r_k \doteq \textstyle{\frac{1}{k}}$ and $z(x_k) < 0$. Since $\lim_{k\rightarrow\infty}\norm{x_k - \Bar{x}} = 0$ and $x_k \in \mathscr{F}^z_{<0}$ for all $k\in\N$, $\Bar{x}$ is a limit point of $\mathscr{F}^z_{<0}$ so $\Bar{x}\in \text{cl}\left(\mathscr{F}^z_{<0}\right)$. As $\Bar{x}$ was arbitrarily selected in $\mathscr{F}^z_{0}$, $\mathscr{F}^z_{0}\subseteq \text{cl}\left(\mathscr{F}^z_{<0}\right)$, which implies $\mathscr{F}^z_{\leq0} \subseteq \text{cl}\left(\mathscr{F}^z_{<0}\right)$. Noting that the closure of sets preserves subset relationships and $\mathscr{F}^z_{\leq0}$ is closed, $\text{cl}\left(\mathscr{F}^z_{<0}\right)\subseteq \text{cl}(\mathscr{F}^z_{\leq0})=\mathscr{F}^z_{\leq0}$, thus $ \text{cl}\left(\mathscr{F}^z_{<0}\right) = \mathscr{F}^z_{\leq0}$.

Next, consider the statement in \eqref{eq: approx reach: interior of non-strict sublevel set}. Since $z$ is of class $\cZ$, 
\begin{equation}
 \mathscr{F}^z_0 \cap \text{int}\left(\mathscr{F}^z_{\leq0}\right) =\emptyset.
    \label{eq: appendix: closure and interior of sublevel sets proof 2}
\end{equation}
This follows because for any $\Bar{x}\in\mathscr{F}^z_0$ there can not exist an $\Bar{r}>0$ such that $\B_{\Bar{r}}(\Bar{x})\subseteq \mathscr{F}^z_0$. Using the fact that the interior of sets preserves subset relationships, $\text{int}\left(\mathscr{F}^z_{<0}\right) \subseteq \text{int}\left(\mathscr{F}^z_{\leq0}\right)$. Since the interior of a set is the largest open subset and $\mathscr{F}^z_{<0}$ is an open set, it follows that $\mathscr{F}^z_{<0} \subseteq \text{int}\left(\mathscr{F}^z_{\leq0}\right)$. Moreover, for any $\Bar{x} \in\text{int}\left(\mathscr{F}^z_{\leq0}\right)$ either $\Bar{x}\in \mathscr{F}^z_{0}$ or $\mathscr{F}^z_{<0}$, which means \eqref{eq: appendix: closure and interior of sublevel sets proof 2} implies $\text{int}\left(\mathscr{F}^z_{\leq0}\right)\subseteq \mathscr{F}^z_{<0}$. Thus, $\mathscr{F}^z_{<0} = \text{int}\left(\mathscr{F}^z_{\leq0}\right)$.  \hfill{\qedsymbol}
    
\vspace{0.4cm}
\noindent\textbf{Proof of Lemma \ref{lemma: approx reach: value function satisfies attached sublevel set}: }
Fix any $t \in [0,T]$ and any point $x_0\in\{x\in\R^n\,|\,v(t,x) = 0\}$. Since $f$ satisfies Assumption \ref{assumption: background: convex flow field}, the infimum in \eqref{eq: background: value function} is attained (use \cite[Theorem 7.1.4, p.189]{CS:04}). In particular, there exists $u^\star\in\cU[t,T]$ such that $g(\varphi(T;t, x_0, u^\star)) = v(t, x_0) = 0$. Let $x^\star(\cdot) \doteq \varphi(\cdot;t,x_0,u^\star)$. Since $x^\star(T) = \varphi(T;t,x_0, u^\star)$ is arbitrarily close to a point in $g^{-1}((-\infty,0))$, $x_0$ must also be arbitrarily close to a point in $\{x\in\R^n\,|\,v(t,x) < 0\}$. To see this, first note that by uniqueness of solutions of \eqref{eq: background: nonlinear system},
\begin{equation*}
   x^\star(s) \doteq  \varphi(s; t, x_0,u^\star) = \varphi(s; T, x^\star(T), u^\star), \quad \forall s\in[t,T],
\end{equation*}
since $x^\star(\cdot)$ and $\varphi(\cdot;T,x^\star(T),u^\star)$ share the same control and terminal condition $x^\star(T)$. Then, by continuity of solutions of \eqref{eq: background: nonlinear system}, for any $ r > 0$ there exists a $\Bar{r} > 0$ such that
\begin{equation}
    \norm{\varphi(t; T, \Bar{x}, u^\star) -x^\star(t)} =   \norm{\varphi(t; T, \Bar{x}, u^\star) - x_0} < r,
    \label{eq: approx reach: value function attached sublevel set proof 2}
\end{equation}
whenever $\norm{\Bar{x} - x^\star(T)} < \Bar{r}$. Moreover, since $x^\star(T) \in g^{-1}(\{0\})$ and $g$ is of class $\cZ$, given any $\Bar{r} > 0$, the set $\B_{\Bar{r}}(x^\star(T)) \cap g^{-1}((-\infty,0))$ is non-empty. Then, take any $\Bar{x} \in \B_{\Bar{r}}(x^\star(T)) \cap g^{-1}((-\infty,0))$ to obtain
\begin{align}
    &v(t,\varphi(t; T, \Bar{x}, u^\star)) =\inf_{u\in\cU[t, T]}g\left(\varphi(T;t, \varphi(t; T, \Bar{x}, u^\star), u)\right)\nn\\
    &\qquad \leq g\left(\varphi(T;t, \varphi(t; T, \Bar{x}, u^\star), u^\star)\right) = g(\Bar{x}) < 0.        \label{eq: approx reach: value function attached sublevel set proof 3}
\end{align}
From \eqref{eq: approx reach: value function attached sublevel set proof 2} and \eqref{eq: approx reach: value function attached sublevel set proof 3}, it follows that for any $r>0$ there exists a point in $\B_r(x_0)$ that also lies in $\{x\in\R^n\,|\,v(t, x) < 0\}$. As $x_0\in\{x\in\R^n\,|\, v(t,x) = 0\}$ is arbitrary, the map $x\mapsto v(t,x)$ is of class $\cZ$ for all $t\in[0,T]$. \hfill{\qedsymbol}

\vspace{0.4cm}
\noindent\textbf{Proof of Lemma \ref{lemma: ellipsoidal approx: min over quadratics attached sublevel set}: }
Fix any $t\in[0,T]$ and $\bar{x} \in \{x\in\R^n\,|\, \overline{e}(t,x) = 0\}$. As $Q_i(t)\in \spd{n}$ implies $Q_i(t)$ is invertible, $\overline{e}$ in \eqref{eq: ellipsoidal approx: proposed under-approx value function} is well-defined. Omitting time-dependence for brevity, there exists an $\bar{n}\doteq \bar{n}(t,\bar{x})\in\{1,\ldots,n_q\}$ such that
\begin{equation}
    \iprod{\bar{x}-q_{\bar{n}}}{Q^{-1}_{\bar{n}}(\bar{x}- q_{\bar{n}})} - 1 = 0 \iff \bnorml{Q^{-\frac{1}{2}}_{\bar{n}}(\bar{x} - q_{\bar{n}})} = 1.
    \label{eq: ellipsoidal approx: min max over quadratics proof 1}
\end{equation}
Fix any $r > 0$ and let $\Tilde{x} \doteq \bar{x} - \frac{a}{2}\cdot \frac{\bar{x} - q_{\bar{n}}}{\norml{\bar{x} - q_{\bar{n}}}}$ with $a \doteq \min\{r, \norml{\bar{x} - q_{\bar{n}}}\} > 0$. Note that $\Tilde{x}$ is well defined as $\bar{x} \neq q_{\bar{n}}$. Since $0<a \leq \bnorml{\bar{x} - q_{\bar{n}}}$, $\tfrac{1}{2}\leq 1 - \frac{a}{2 \norml{\bar{x} - q_{\bar{n}}}} <1$. Then, from \eqref{eq: ellipsoidal approx: min max over quadratics proof 1} and the definition of $\Tilde{x}$,
\begin{align*}
    \bnorml{Q^{-\frac{1}{2}}_{\bar{n}}(\Tilde{x} - q_{\bar{n}})} &= \Big(1 - \tfrac{a}{2 \norml{\bar{x} - q_{\bar{n}}}}\Big)\bnorml{Q^{-\frac{1}{2}}_{\bar{n}}(\bar{x} - q_{\bar{n}})}<1\\
    \implies &\iprod{\Tilde{x}-q_{\bar{n}}}{Q^{-1}_{\bar{n}}(\Tilde{x}- q_{\bar{n}})} < 1.
\end{align*}
As defined in \eqref{eq: ellipsoidal approx: proposed under-approx value function}, $\overline{e}(t,\Tilde{x}) \leq \iprod{\Tilde{x}-q_{\bar{n}}}{Q^{-1}_{\bar{n}}(\Tilde{x}- q_{\bar{n}})} -1< 0$. Since $\norml{\Tilde{x} - \bar{x}}= \frac{a}{2} < r$, which means $\Tilde{x}\in\B_r(\bar{x})$, it follows that for any $\bar{x}\in \{x\in\R^n\,|\, \overline{e}(t,x) = 0\}$ and any $r>0$, there exists a point $\Tilde{x} \in \B_r(\bar{x})$ with $\overline{e}(t,\Tilde{x}) < 0$. Hence, as $t$ is arbitrary, $x\mapsto \overline{e}(t,x)$ is of class $\cZ$ for all $t\in[0,T]$. \hfill{}{\qedsymbol}

\bibliographystyle{IEEEtran}
\small{\bibliography{references}}
\end{document}